\documentclass[fullpage, 11pt]{article}
\usepackage{amsfonts}
\usepackage{amssymb}
\usepackage{amsmath}
\usepackage{latexsym}
\usepackage{amsthm}
\usepackage{epsfig}
\usepackage{graphicx}
\usepackage{subfig}
\usepackage{color}

\textwidth 15.5cm
\textheight 21.5cm
\topmargin 0cm
\evensidemargin 0in
\oddsidemargin 0in

\newcommand{\old}[1]{{}}

\newtheorem{theorem}{Theorem}[section]
\newtheorem{corollary}[theorem]{Corollary}
\newtheorem{lemma}[theorem]{Lemma}

\newtheorem{remark}[theorem]{Remark}
\newtheorem{observation}[theorem]{Observation}
\newtheorem{claim}[theorem]{Claim}

\newtheorem{example}[theorem]{Example}

\newtheorem{assumption}[theorem]{Assumption}

\begin{document}

\title{On the Relative Strength of \\ Split, Triangle and Quadrilateral Cuts}

\author{
Amitabh Basu \thanks{Supported by a Mellon Fellowship.}   \\
Tepper School of Business, Carnegie Mellon University, Pittsburgh,
PA 15213
\\ abasu1@andrew.cmu.edu
\\ $\;$ \\
Pierre Bonami \thanks{Supported by ANR grant BLAN06-1-138894.}   \\
LIF, Facult\'e des Sciences de Luminy, Universit\'e de Marseille,
France
\\ pierre.bonami@lif.univ-mrs.fr
\\ $\;$ \\
G\'erard Cornu\'ejols \thanks{Supported by  NSF grant CMMI0653419,
ONR grant N00014-97-1-0196 and ANR grant BLAN06-1-138894.}   \\
Tepper School of Business, Carnegie Mellon University, Pittsburgh,
PA 15213 \\ and LIF, Facult\'e des Sciences de Luminy, Universit\'e
de Marseille, France
\\ gc0v@andrew.cmu.edu
\\ $\;$ \\
Fran\c{c}ois Margot \thanks{Supported by  ONR grant N00014-97-1-0196.}\\
Tepper School of Business, Carnegie Mellon University, Pittsburgh,
PA 15213 \\ fmargot@andrew.cmu.edu}

\date{July 2008, revised December 2008}

\maketitle

\begin{abstract}
Integer programs defined by two equations with two free integer
variables and nonnegative continuous variables have three types of
nontrivial facets: split, triangle or quadrilateral inequalities. In this
paper, we compare the strength of these three families of inequalities.
In particular we study how  well each family approximates the integer hull.
We show that, in a well defined sense, triangle inequalities provide
a good approximation of the integer hull. The same statement holds for
quadrilateral inequalities. On the other hand, the approximation produced by
split inequalities may be arbitrarily bad.
\end{abstract}

\section{Introduction}\label{Sec:Introduction}

In this paper, we consider mixed integer linear programs with two
equality constraints, two free integer variables and any number of
nonnegative continuous variables. We assume that the two integer
variables are expressed in terms of the remaining variables as
follows.
\begin{equation} \label{SI}
\begin{array}{rrcl}
          & x & = & f + \sum_{j=1}^{k} r^j s_j        \\
           &  x  & \in & \mathbb{Z}^2                  \\
          &  s  & \in & \mathbb{R}_+^k .
\end{array}
\end{equation}

This model was introduced by Andersen, Louveaux, Weismantel and Wolsey \cite{alww}. It is a natural relaxation of a general mixed integer linear
program (MILP) and therefore it can be used to generate cutting
planes for MILP. Currently, MILP solvers rely on cuts that
can be generated from a single equation  (such as Gomory mixed integer cuts
\cite{go}, MIR cuts \cite{mw}, lift-and-project cuts \cite{bcc},
lifted cover inequalities \cite{cjp}). Model (\ref{SI}) has attracted
attention recently as a way of generating new families of cuts from two
equations instead of just a single one \cite{alww,cm,dw,esp, go07}.

We assume $f \in \mathbb{Q}^2 \setminus \mathbb{Z}^2$, $k \ge 1$,
and $r^j \in \mathbb{Q}^2 \setminus \left\{ 0 \right\}$. So $s=0$ is
not a solution of (\ref{SI}).  Let $R_f(r^1,\ldots,r^k)$ be the
convex hull of all vectors $s \in \mathbb{R}_+^k$ such that $f +
\sum_{j=1}^{k} r^j s_j$ is integral. A classical theorem of Meyer
\cite{meyer} implies that $R_f(r^1,\ldots,r^k)$ is a polyhedron.
Andersen, Louveaux, Weismantel and Wolsey \cite{alww} showed that
the facets of $R_f(r^1,\ldots,r^k)$ are $s \geq 0$ (called {\it
trivial inequalities}), split inequalities \cite{COOK} and
intersection cuts (Balas \cite{bal}) arising from triangles or
quadrilaterals in $\mathbb{R}^2$. Borozan and Cornu\'ejols \cite{bc}
investigated a relaxation of (\ref{SI}) where the vector $(s_1,
\ldots, s_k) \in \mathbb{R}_+^k$ is extended to infinite dimensions
by defining it for all directions $r^j \in \mathbb{Q}^2$ instead of
just $r^1, \ldots , r^k$. Namely let $R_f$ be the convex hull of all
infinite-dimensional vectors $s$ with finite support that satisfy
\begin{equation} \label{SIinfinite}
\begin{array}{rrcl}
          & x & = & f + \sum_{r \in \mathbb{Q}^2} r s_r        \\
           &  x  & \in & \mathbb{Z}^2                  \\
          &  s  & \geq  & 0.
\end{array}
\end{equation}
Theorem~\ref{THM:minfunc} below shows that there is a one-to-one
correspondance between minimal valid inequalities for $R_f$ and
maximal lattice-free convex sets that contain $f$ in their interior.
By {\em lattice-free convex set} we mean a convex set with
no integral point in its interior. However integral points are
allowed on the boundary. These maximal lattice-free convex sets
are splits, triangles, and quadrilaterals
as proved in the following theorem of Lov\'asz \cite{lovasz}.

\begin{theorem} \label{THM:maxconv}
{\em (Lov\'asz \cite{lovasz})}
In the plane,
a maximal lattice-free convex set with nonempty interior
is one of the following:
\begin{itemize}
\item[(i)] A split $c \le a x_1 + b x_2 \le c+1$ where $a$ and $b$ are coprime
integers and $c$ is an integer;
\item[(ii)] A triangle with an integral point in the interior of each of its edges;
\item[(iii)] A quadrilateral containing exactly four integral points,
with exactly one of them in the interior of each of its edges;
Moreover, these four integral points are vertices of a parallelogram
of area 1.
\end{itemize}
\end{theorem}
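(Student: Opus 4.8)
The plan is to establish three structural facts about a maximal lattice-free convex set $K$ with nonempty interior, and then read off the three cases. First I would replace $K$ by its closure: since $\operatorname{int}(\operatorname{cl} K)=\operatorname{int} K$ for a convex set, the closure is still lattice-free, so maximality forces $K$ to be closed. The three facts I would prove are (a) $K$ is a polyhedron each of whose facets contains an integral point in its relative interior; (b) $K$ has at most four facets; and (c) the recession cone of $K$ is at most one-dimensional, which separates the unbounded case from the bounded one.

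For (a) the engine is a push-out argument driven by maximality: if some facet $F$ lay on a supporting line $a^\top x=b$ but contained no integral point in its relative interior, one could tilt or translate that line outward and take the convex hull with $K$ without creating any integral point in the interior, contradicting maximality; the same local analysis shows the boundary consists of finitely many maximal segments, so $K$ is a polyhedron. For (c) I would first note that an open two-dimensional cone always contains integral points (the directions from a fixed point to $\mathbb{Z}^2$ are dense), so if $\operatorname{rec}(K)$ were two-dimensional then $\operatorname{int}K$ would contain a lattice point. If $\operatorname{rec}(K)$ is a line, then $K$ is a slab $\{c_1\le w^\top x\le c_2\}$; lattice-freeness forces $w$ to be a primitive integral normal (otherwise $\{w^\top z:z\in\mathbb{Z}^2\}$ is dense and the open slab meets $\mathbb{Z}^2$), and maximality forces $c_2-c_1=1$ with $c_1,c_2$ consecutive integers, which is exactly case (i). A short argument excludes a pointed recession ray: its two unbounded edges would be parallel, placing $K$ inside an infinite slab of width at most one bounded by integral lines, i.e. inside a lattice-free split strictly larger than $K$, contradicting maximality. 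Hence a maximal $K$ is either a split or is bounded.

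For (b), suppose $K$ is bounded and choose, using (a), an integral point $z_i$ in the relative interior of each facet $F_i$; these points are distinct since relative interiors of distinct facets are disjoint. The key observation is a parity/midpoint argument: if $K$ had five or more facets, then by pigeonhole on $(\mathbb{Z}/2\mathbb{Z})^2$ two of them, $z_i\neq z_j$, would satisfy $z_i\equiv z_j\pmod 2$, so $\tfrac12(z_i+z_j)\in\mathbb{Z}^2$; since $z_i,z_j$ lie in the relative interiors of distinct facets, the open segment between them lies in $\operatorname{int}K$, making this midpoint an interior integral point, a contradiction. Thus $K$ has three or four facets. If it has three, then (a) already places an integral point in the interior of each edge, which is precisely case (ii).

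The quadrilateral case (iii) is where I expect the real work. Here the parity count is tight, so $z_1,z_2,z_3,z_4$ represent all four classes of $(\mathbb{Z}/2\mathbb{Z})^2$. I would first show each edge carries exactly one integral point: any integral point $w$ on an edge $E_1$ other than $z_1$ cannot be congruent mod $2$ to a $z_j$ on another edge (else the midpoint argument yields an interior integral point), so $w\equiv z_1$; but two integral points of the same parity on a single segment have an integral midpoint strictly between them, so only $z_1$ survives on $E_1$, and the vertices are non-integral because each lies on an edge whose unique integral point is interior to it. Consequently $z_1,\dots,z_4$ are the only integral points of $K$, hence also of the inner quadrilateral $P=\operatorname{conv}(z_1,z_2,z_3,z_4)$ (any other integral point of $P$ would be interior to $P$ or in the relative interior of a chord $z_iz_{i+1}$, both lying in $\operatorname{int}K$). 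Pick's theorem then gives $\operatorname{area}(P)=0+\tfrac{4}{2}-1=1$, and a convex lattice quadrilateral of area one splits along a diagonal into two unimodular triangles, which a direct orientation computation forces into the parallelogram relation $z_1+z_3=z_2+z_4$. The main obstacle throughout is the facet lemma (a): extracting a relative-interior integral point on every facet, together with the polyhedrality it rests on, is the delicate part, whereas once (a) is in hand the parity/midpoint idea cleanly drives both the bound of four facets and the rigidity of the quadrilateral.
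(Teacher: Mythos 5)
First, a point of reference: the paper does not prove Theorem~\ref{THM:maxconv} at all --- it is quoted from Lov\'asz with a citation --- so there is no in-paper argument to compare yours against. Your proposal follows what is essentially the standard proof of this result (maximality forces polyhedrality and a lattice point in the relative interior of each facet; a Doignon-style parity/midpoint count bounds the number of facets by four; Pick's theorem and a unimodularity computation give the area-one parallelogram), and the overall skeleton is sound: the pigeonhole step for five facets, the triangle case, and the Pick/parallelogram computation are all correct.

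The step that actually fails as written is in the quadrilateral case. You assert that an integral point $w$ on edge $E_1$ other than $z_1$ ``cannot be congruent mod $2$ to a $z_j$ on another edge, else the midpoint argument yields an interior integral point.'' If $w$ is the vertex $E_1\cap E_j$, then the open segment from $w$ to $z_j$ lies in the relative interior of $E_j$, i.e.\ on the boundary of $K$, so the midpoint is a boundary point and no contradiction results. Since this is exactly the configuration you must exclude in order to show the vertices are non-integral (which the theorem asserts and your later sentence takes for granted), the gap is not cosmetic. It is fixable by reordering the argument: first show each $\operatorname{relint}(E_i)$ contains exactly one integral point --- two adjacent integral points on a line differ by a primitive vector and hence are incongruent mod $2$, yet each must avoid the parity classes of $z_j$ for all $j\neq i$ (those midpoints genuinely are interior, being midpoints of relative-interior points of distinct facets), leaving only a single available class for both, a contradiction; then an integral vertex $w\in E_1\cap E_2$ must satisfy $w\equiv z_1$ or $w\equiv z_2\pmod 2$ (it cannot match $z_3$ or $z_4$), and $(w+z_i)/2$ would be a second integral point in $\operatorname{relint}(E_i)$, contradicting the uniqueness just proved. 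Separately, your Lemma (a) --- polyhedrality together with a lattice point in the relative interior of every facet --- is where most of the real work lives and is only gestured at, as you acknowledge; and the exclusion of a pointed recession ray silently uses both that the recession direction is rational (otherwise a tube $D+\mathbb{R}_+r$ around the ray meets $\mathbb{Z}^2$ by equidistribution) and that the bounding slab is itself lattice-free. With those repairs the argument goes through.
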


$R_f(r^1,\ldots,r^k)$ is a polyhedron of blocking type, i.e.
$R_f(r^1,\ldots,r^k) \subseteq \mathbb{R}^k_+$ and if $x \in
R_f(r^1,\ldots,r^k)$, then $y \geq x$ implies $y \in
R_f(r^1,\ldots,r^k)$. Similarly $R_f$ is a convex set of blocking
type. Any nontrivial valid linear inequality for $R_f$ is of the
form
\begin{equation} \label{valid:infinite}
\sum_{r \in \mathbb{Q}^2} \psi(r) s_r \geq 1
\end{equation}
where $\psi: \mathbb{Q}^2 \rightarrow \mathbb{R}$. The nontrivial
valid linear inequalities for $R_f(r^1,\ldots,r^k)$ are the
restrictions of (\ref{valid:infinite}) to $r^1, \ldots , r^k$
\cite{cm}:
\begin{equation} \label{valid}
\sum_{j=1}^k \psi(r^j) s_j \geq 1.
\end{equation}
A nontrivial valid linear inequality for $R_f$ is {\em minimal} if
there is no other nontrivial valid inequality $\sum_{r \in
\mathbb{Q}^2} \psi'(r) s_r \geq 1$ such that $\psi'(r) \leq \psi(r)$
for all $r \in \mathbb{Q}^2$.

\begin{theorem} \label{THM:minfunc}
{\em (Borozan and Cornu\'ejols \cite{bc})} Minimal nontrivial valid
linear inequalities for $R_f$ are associated with functions $\psi$
that are nonnegative positively homogeneous piecewise linear and
convex. Furthermore, the closure of the set
\begin{equation} \label{Bpsi}
B_{\psi} := \{ x \in \mathbb{Q}^2: \; \psi(x-f) \leq 1 \}
\end{equation}
is a maximal lattice-free convex set containing $f$ in its interior.
\end{theorem}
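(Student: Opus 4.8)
The plan is to extract each asserted property of a minimal $\psi$ in turn and then read off the geometry of $\cl(B_\psi)$ from those properties; the one genuinely delicate point is the maximality of $\cl(B_\psi)$, which I would obtain as the contrapositive of minimality of $\psi$. First I would use that $R_f$ is of blocking type: if some $\psi(r_0) < 0$, then starting from any feasible $s$ and increasing the single coordinate $s_{r_0}$ (which keeps $s$ in $R_f$ by the blocking property) drives $\sum_r \psi(r)s_r$ below $1$, contradicting validity; hence $\psi \ge 0$. For homogeneity and convexity I would introduce the \emph{positively homogeneous convex hull}
\[
\psi^\ast(r) \ =\ \inf\Big\{ \textstyle\sum_i \psi(r^i)\,t_i \ :\ t_i \ge 0,\ \sum_i r^i t_i = r\Big\},
\]
the infimum taken over finite collections of directions. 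By construction $\psi^\ast$ is positively homogeneous and subadditive, and $\psi^\ast \le \psi$ (take the trivial one-term representation). The key observation is that $\psi^\ast$ is again valid: given a feasible $s$ reaching an integral point, I replace each direction $r$ in its support by a near-optimal representation $r = \sum_i r^i t_i$, rescaling the corresponding weights by $s_r$; the new vector is still feasible (it reaches the same integral point) and its $\psi$-value is within $\varepsilon$ of $\sum_r \psi^\ast(r)s_r$, so validity of $\psi$ forces $\sum_r \psi^\ast(r)s_r \ge 1$. Minimality of $\psi$ together with $\psi^\ast \le \psi$ then gives $\psi = \psi^\ast$, so $\psi$ is nonnegative, positively homogeneous and, being subadditive and homogeneous, convex.

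\emph{Geometry of $B_\psi$.} Since $\psi$ is convex, $B_\psi = \{x : \psi(x-f)\le 1\}$ is convex, and I would pass to its closure to work in $\mathbb{R}^2$. Because $\psi$ is finite and positively homogeneous it is bounded on a neighborhood of the origin, so $\psi(x-f) < 1$ on a ball around $f$; as $\psi(0)=0$ this puts $f$ in the interior. For lattice-freeness, suppose an integral point $z$ lies in the interior, so $\psi(z-f) < 1$. Taking the single direction $r = z-f$ with weight $1$ yields a feasible vector reaching $z$, so validity gives $\psi(z-f)\ge 1$, a contradiction; hence $\cl(B_\psi)$ contains no integral point in its interior.

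\emph{Maximality (the main obstacle).} This is where minimality must be used in full. I would first record that \emph{every} lattice-free convex set $C$ with $f \in \operatorname{int} C$ yields a valid inequality through its gauge $\gamma(r) = \inf\{\lambda>0 : f + r/\lambda \in C\}$: for feasible $s$ reaching integral $z$, lattice-freeness gives $z \notin \operatorname{int} C$, hence $\gamma(z-f)\ge 1$, and then
\[
1 \ \le\ \gamma(z-f) \ =\ \gamma\Big(\textstyle\sum_r r\, s_r\Big) \ \le\ \sum_r \gamma(r)\,s_r
\]
by subadditivity and homogeneity of the gauge. Now if $\cl(B_\psi)$ were \emph{not} maximal, there would be a lattice-free convex $C \supsetneq \cl(B_\psi)$; then $f \in \operatorname{int} C$, the gauge $\gamma$ of $C$ is valid, $\gamma \le \psi$ (a larger set has a smaller gauge, and $\psi$ is itself the gauge of $\cl(B_\psi)-f$), and $\gamma(r) < \psi(r)$ for any $r$ pointing to a witness in $C \setminus \cl(B_\psi)$. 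This contradicts minimality of $\psi$, so $\cl(B_\psi)$ is maximal lattice-free.

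\emph{Piecewise linearity.} Finally, by Lov\'asz's Theorem~\ref{THM:maxconv} a maximal lattice-free convex set in the plane is a split, triangle or quadrilateral, hence a polyhedron; since $\psi$ is exactly the gauge of $\cl(B_\psi) - f$, and the gauge of a polyhedron containing the origin in its interior is piecewise linear, $\psi$ is piecewise linear. I expect the validity of $\psi^\ast$ and the gauge/minimality argument for maximality to be the two places needing the most care; everything else is bookkeeping with convexity and positive homogeneity.
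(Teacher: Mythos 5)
This theorem is stated in the paper as a quoted result of Borozan and Cornu\'ejols \cite{bc}; the paper itself contains no proof of it, so there is no in-paper argument to compare yours against. Judged on its own, your reconstruction is sound and follows what is essentially the standard route of \cite{bc}: nonnegativity from the blocking property, sublinearity by comparing $\psi$ with its positively homogeneous convex minorant $\psi^\ast$ and invoking minimality, lattice-freeness from validity applied to the one-term solution $s=e_{z-f}$, and maximality by the gauge/minimality contrapositive. The gauge lemma (every lattice-free convex set containing $f$ in its interior yields a valid inequality) and the validity of $\psi^\ast$ are indeed the two load-bearing steps, and both are argued correctly.

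A few points deserve one more line of care if this were written out in full. First, the blocking property of $R_f$ (which you use for $\psi\ge 0$) is itself nontrivial and rests on the rationality of the rays: $s+te_{r_0}$ is shown to lie in $R_f$ by moving to the integral point $z+q r_0$ for $q$ clearing denominators and taking convex combinations; the paper asserts this without proof, so citing it is acceptable. Second, in the validity of $\psi^\ast$ the error you accumulate is $\varepsilon\sum_r s_r$ rather than $\varepsilon$, which is harmless since $s$ is fixed with finite support before letting $\varepsilon\to 0$. Third, for lattice-freeness of the \emph{closure} you should note that $\operatorname{int}\,\cl(B_\psi)=\{x:\psi(x-f)<1\}$ (true for sublevel sets of a nonnegative gauge, since $\psi(x-f)=1$ forces $x$ onto the boundary via $(1+\varepsilon)(x-f)$), and in the maximality step that $C\setminus\cl(B_\psi)$ contains an open set (hence a rational witness), because $\conv(\cl(B_\psi)\cup\{x_0\})$ has interior points outside $\cl(B_\psi)$ whenever $x_0\notin\cl(B_\psi)$. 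With those remarks supplied, the argument is complete; deriving piecewise linearity from Lov\'asz's classification rather than proving it directly is a legitimate shortcut and involves no circularity.
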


Conversely, any maximal lattice-free convex set $B$ with $f$ in its
interior defines a function $\psi_B: \mathbb{R}^2 \rightarrow
\mathbb{R}_+$ that can be used to generate a minimal nontrivial
valid linear inequality. Indeed, define $\psi_B(0) = 0$ and
$\psi_B(x-f)=1$ for all points $x$ on the boundary of $B$. Then, the
positive homogeneity of $\psi_B$ implies the value of $\psi_B(r)$
for any vector $r \in \mathbb{R}^2 \setminus \{ 0 \}$: If there is a
positive scalar $\lambda$ such that the point $f + \lambda r$ is on
the boundary of $B$, we get that $\psi_B(r) = 1/\lambda$. Otherwise,
if there is no such $\lambda$, $r$ is an unbounded direction of $B$
and $\psi_B(r) = 0$.

Note that the above construction of $\psi_B$ is nothing but the derivation of the intersection cut as introduced by Balas \cite{bal}.

Following Dey and Wolsey \cite{dw}, the maximal lattice-free
triangles can be partitioned into three types (see Figure
\ref{The4cases}):

\begin{itemize}
\item {\it Type 1 triangles}: triangles with integral vertices
and exactly one integral  point in the relative interior of each edge;
\item {\it Type 2 triangles}: triangles with at least one fractional
vertex $v$, exactly one integral point in the relative interior
of the two edges incident to $v$ and at least two integral points
on the third edge;
\item {\it Type 3 triangles}: triangles with exactly three integral
points on the boundary, one in the relative interior
of each edge.
\end{itemize}
Figure \ref{The4cases} shows these three types of triangles as well as
a maximal lattice-free quadrilateral and a split satisfying the properties of
Theorem \ref{THM:maxconv}.

\begin{figure}[htbp]
\centering \scalebox{0.6}{ \epsfig{file=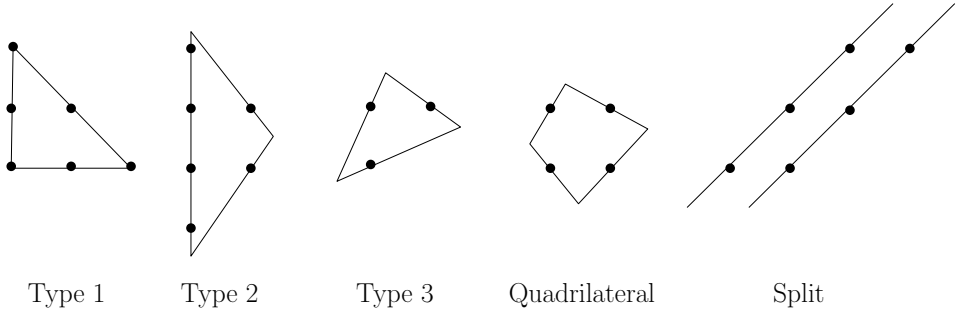} }
\caption{Maximal lattice-free convex sets with nonempty interior in
$\mathbb{R}^2$} \label{The4cases}
\end{figure}

In this paper we will need conditions guaranteeing that a split,
triangle or quadrilateral actually defines a facet of
$R_f(r^1,\ldots,r^k)$. Such conditions were obtained by Cornu\'ejols
and Margot \cite{cm} and will be stated in Theorem
\ref{THM:mainRfk}.

\subsection{Motivation}
\label{SUB:MOTIVATION}

An unbounded maximal lattice-free convex set is called a
{\it split}. It has two parallel edges whose direction is called the
{\it direction} of the split. Split inequalities for (\ref{SI}) are
valid inequalities that can be derived by combining the two
equations in (\ref{SI}) and by using the integrality of $\pi_1 x_1 +
\pi_2 x_2$, where $\pi \in \mathbb{Z}^2$ is normal to the
direction of the split. Similarly, for general MILPs, the
equations can be combined into a single equality from which a split
inequality is derived. Split inequalities are equivalent to Gomory
mixed integer cuts \cite{nw}. Empirical evidence shows that split
inequalities can be effective for strengthening the linear
programming relaxation of MILPs \cite{balsax,dgl}. Interestingly,
triangle and quadrilateral inequalities cannot be derived from a
single equation \cite{alww}. They can only be derived from (\ref{SI}) without
aggregating the two equations. Recent computational experiments by
Espinoza \cite{esp} indicate that quadrilaterals also induce
effective cutting planes in the context of solving general MILPs. In
this paper, we consider the relative strength of split, triangle and
quadrilateral inequalities from a theoretical point of view. We use
an approach for measuring strength initiated by Goemans
\cite{goemans}, based on the following definition and results.

Let $Q \subseteq \mathbb{R}_+^n \setminus \{ 0 \}$ be a polyhedron
of the form $Q = \{ x: \; a^i x \geq b_i \mbox{ for } i=1, \ldots ,
m \}$ where $a^i \geq 0$ and $b_i \geq 0$ for $i = 1 , \ldots , m$
and let $\alpha > 0$ be a scalar. We define the polyhedron $\alpha
Q$ as $\{ x: \; \alpha a^i x \geq b_i \mbox{ for } i=1, \ldots , m
\}$. Note that $\alpha Q$ contains $Q$ when $\alpha \geq 1$. It will
be convenient to define $\alpha Q$ to be $\mathbb{R}_+^n$ when
$\alpha = + \infty$.

We need the following generalization of a theorem of Goemans \cite{goemans}.

\begin{theorem} \label{THM:goemans}
Suppose $Q \subseteq \mathbb{R}_+^n \setminus \{ 0 \}$ is defined as
above. If convex set $P \subseteq \mathbb{R}^n_+$ is a relaxation of
$Q$ (i.e. $Q \subseteq P$), then the smallest value of $\alpha \ge 1$
such that $P \subseteq \alpha Q$ is
$$\max_{i=1, \ldots, m} \left\{\frac{b_i}{\inf\{ a^i x \ : \ \ x \in P\}} \ :
\ b_i > 0\ \right\}.$$ Here, we define $\frac{b_i}{\inf\{ a^i x \ :
\ \ x \in P\}}$ to be $+\infty$ if $\inf\{ a^i x \ : \ \ x \in P\} =
0$.
\end{theorem}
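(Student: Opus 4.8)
The plan is to reduce the set inclusion $P \subseteq \alpha Q$ to a family of scalar inequalities, one per constraint, and then read off the threshold value of $\alpha$. Write $\beta_i := \inf\{a^i x : x \in P\}$, and note that $\beta_i \ge 0$ because $P \subseteq \mathbb{R}^n_+$ and $a^i \ge 0$. Since $\alpha Q = \{x : \alpha a^i x \ge b_i,\ i=1,\dots,m\}$, the inclusion $P \subseteq \alpha Q$ holds if and only if for every $i$ we have $\alpha a^i x \ge b_i$ for all $x \in P$, and this in turn is equivalent to $\alpha \beta_i \ge b_i$. (The equivalence with the infimum is immediate in both directions, using $\alpha > 0$, and does not require the infimum to be attained.)

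Next I would split on the sign of $b_i$. When $b_i = 0$ the inequality $\alpha a^i x \ge 0$ holds automatically for every $\alpha > 0$, since $a^i x \ge 0$ on $\mathbb{R}^n_+$; this is exactly why such constraints are excluded from the maximum. When $b_i > 0$, the condition $\alpha \beta_i \ge b_i$ reads $\alpha \ge b_i/\beta_i$ if $\beta_i > 0$, and is infeasible for every finite $\alpha$ if $\beta_i = 0$ — matching the convention $b_i/\beta_i = +\infty$ and the fact that only $\alpha Q = \mathbb{R}^n_+$ can then contain $P$. Taking the conjunction over all $i$, the set of admissible $\alpha$ is $[\alpha^*, +\infty)$ with $\alpha^* = \max\{b_i/\beta_i : b_i > 0\}$, and $\alpha^*$ itself is admissible; hence $\alpha^*$ is the smallest admissible value over all $\alpha > 0$.

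The final and most delicate step is to confirm that $\alpha^* \ge 1$, so that intersecting with the constraint $\alpha \ge 1$ leaves the answer unchanged. This is where the hypotheses $Q \subseteq P$ and $0 \notin Q$ must be used. I would show that some constraint with $b_i > 0$ is binding on $Q$: picking any $\bar x \in Q$ (the statement is vacuous if $Q$ is empty) and following the ray $\{t\bar x : t \ge 0\}$ toward the origin, the blocking structure forced by $a^i \ge 0$ makes $\{t : t\bar x \in Q\}$ a half-line $[t_0,\infty)$ with $t_0 > 0$ since $0 \notin Q$; at the endpoint $t_0\bar x$ some constraint is tight, and it must have $b_i > 0$, because any constraint with $b_i = 0$ stays satisfied all the way to the origin. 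For that index $\inf\{a^i x : x \in Q\} = b_i$, and $Q \subseteq P$ gives $\beta_i \le \inf\{a^i x : x \in Q\} = b_i$, whence $b_i/\beta_i \ge 1$ and therefore $\alpha^* \ge 1$. The main obstacle I anticipate is precisely this last step: the bound $\alpha^* \ge 1$ does not follow from the scalar reduction alone and genuinely requires exploiting that $Q$ is a blocking polyhedron avoiding the origin, whereas the earlier reduction to per-constraint inequalities is routine.
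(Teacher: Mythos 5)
Your proof is correct and follows essentially the same route as the paper's: both reduce the inclusion $P \subseteq \alpha Q$ to the per-constraint conditions $\alpha \inf\{a^i x : x \in P\} \ge b_i$, dispose of the constraints with $b_i = 0$ via $P \subseteq \mathbb{R}^n_+$ and $a^i \ge 0$, and read off the threshold as the maximum over the constraints with $b_i > 0$. The one genuine addition is your verification that this threshold is at least $1$ (using $0 \notin Q$, $Q \subseteq P$, and the blocking structure along the ray $t\bar x$), a point the paper's proof leaves implicit even though it is needed for the stated formula to coincide with the smallest admissible $\alpha \ge 1$.
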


In other words, the only directions that need to be considered to
compute $\alpha$ are those defined by the nontrivial facets of $Q$.
Goemans' paper assumes that both $P$ and $Q$ are polyhedra, but one
can easily verify that only the polyhedrality of $Q$ is needed in
the proof. We give the proof of Theorem \ref{THM:goemans} in Section
\ref{SEC:goemans_proof}, for completeness.

\subsection{Results}\label{SEC:Results}

Let the {\it split closure} $S_f(r^1,\ldots,r^k)$ be the
intersection of all split inequalities, let the {\it triangle
closure} $T_f(r^1,\ldots,r^k)$ be the intersection of all
inequalities arising from maximal lattice-free triangles, and let
the {\it quadrilateral closure} $Q_f(r^1,\ldots,r^k)$ be the
intersection of all inequalities arising from maximal lattice-free
quadrilaterals. In the remainder of the paper, to simplify notation,
we refer to $R_f(r^1,\ldots,r^k)$, $S_f(r^1,\ldots,r^k)$,
$T_f(r^1,\ldots,r^k)$ and $Q_f(r^1,\ldots,r^k)$ as $R_f^k, S_f^k,
T_f^k$ and $Q_f^k$ respectively, whenever the rays $r^1,\ldots,r^k$
are obvious from the context.

Since all the facets of $R_f^k$ are induced by these three families
of maximal lattice-free convex sets, we have
$$R_f^k=S_f^k \cap T_f^k \cap Q_f^k.$$
It is known that the split closure is a polyhedron (Cook, Kannan and
Schrijver \cite{COOK}) but such a result is not known for the
triangle closure and the quadrilateral closure.
In this paper we show the following results.

\begin{theorem} \label{THM:inclusion}
$T_f^k \subseteq S_f^k$ and $Q_f^k \subseteq S_f^k.$
\end{theorem}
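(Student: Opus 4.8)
The plan is to prove both inclusions by the same device: show that \emph{every} split inequality is valid for the triangle closure $T_f^k$ and for the quadrilateral closure $Q_f^k$. Since $S_f^k$ is by definition the intersection of all split inequalities, this gives $T_f^k \subseteq S_f^k$ and $Q_f^k \subseteq S_f^k$ at once. Fix a split $S$ and the inequality $\sum_{j} \psi_S(r^j) s_j \ge 1$ it induces, where $\psi_S$ is the function associated with $S$ by the construction following Theorem~\ref{THM:minfunc}. Every maximal lattice-free triangle $T$ (containing $f$ in its interior) yields an inequality $\sum_j \psi_T(r^j) s_j \ge 1$ that is valid for $T_f^k$ by definition of the triangle closure, so it suffices to exhibit a sequence of maximal lattice-free triangles $T_t$ with $\psi_{T_t}(r^j) \to \psi_S(r^j)$ for each $j = 1,\ldots,k$. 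Then for any $\bar s \in T_f^k$ we have $\sum_j \psi_{T_t}(r^j)\bar s_j \ge 1$ for all $t$; since there are finitely many rays and $\bar s \ge 0$, letting $t \to \infty$ gives $\sum_j \psi_S(r^j)\bar s_j \ge 1$, i.e. $\bar s \in S_f^k$. The identical scheme with maximal lattice-free quadrilaterals $Q_t$ settles $Q_f^k \subseteq S_f^k$.

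By Theorem~\ref{THM:maxconv} a split is $c \le a x_1 + b x_2 \le c+1$ with $a,b$ coprime, so an integer unimodular change of variables (it maps $\mathbb{Z}^2$ onto itself, preserves lattice-free and maximal lattice-free sets, and transforms $f$ and the rays accordingly) reduces everything to the standard split $S = \{x : 0 \le x_2 \le 1\}$ with $0 < f_2 < 1$. A direct computation then gives $\psi_S(r) = \max\{ r_2/(1-f_2),\ -r_2/f_2 \}$: an upward ray leaves $S$ through $x_2 = 1$, a downward ray through $x_2 = 0$, and a ray parallel to $S$ has $\psi_S(r) = 0$.

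For the triangles I would use a ``fan'' family $T_t$ degenerating onto $S$. Put the long edge of $T_t$ on the line $x_2 = 1$, growing to the segment from $(-t,1)$ to $(t,1)$; place the apex just below the strip at height $-h_t$ with $h_t \to 0$; and let the two remaining edges run from the apex through the fixed integral points $(\lfloor f_1\rfloor,0)$ and $(\lfloor f_1\rfloor+1,0)$ up to the two ends of the long edge. One checks that each $T_t$ is a maximal lattice-free triangle of Dey--Wolsey type~2 containing $f$ in its interior, and that as $t \to \infty$ the two lower edges pivot about their fixed integral points and flatten onto the line $x_2 = 0$, so that $T_t \to S$. The convergence $\psi_{T_t}(r^j) \to \psi_S(r^j)$ is verified ray by ray: for an upward ray the exit point already lies on the fixed line $x_2 = 1$, so equality holds for all large $t$; for a downward ray the exit point slides along the flattening lower edge toward the split exit on $x_2 = 0$; and for a ray parallel to $S$ the exit recedes to infinity so $\psi_{T_t}(r^j) \to 0$. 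For $Q_f^k$ I would run the same argument with maximal lattice-free quadrilaterals obtained by tilting the two boundary lines of $S$ slightly so they are no longer parallel and closing off the resulting strip far out on both sides; as the tilt tends to $0$ and the closing edges recede, these quadrilaterals approach $S$ and their coefficients converge to $\psi_S$.

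The limiting step is routine once the families are in hand; the real work, and the main obstacle, is the explicit construction. One must produce sets that are simultaneously (i) genuinely maximal lattice-free, so the required integral points must sit correctly on the edges (this constrains how the apex height $h_t$, the endpoints, and the tilt may be chosen); (ii) always containing $f$ in their interior along the whole sequence; and (iii) such that the cut coefficients converge on \emph{every} ray. The delicate case is the downward and nearly-parallel rays, whose exit points move along edges that are themselves moving, so one must control the exit parameter uniformly as $t \to \infty$. Producing a single family meeting all three requirements for arbitrary $f$ and arbitrary rays, including degenerate positions such as integral $f_1$, is where the care is needed.
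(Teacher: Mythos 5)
Your proposal is correct and is essentially the paper's own argument: the paper likewise approximates the split by a family of Type~2 triangles having one edge on one boundary line of the split and two edges pivoting about two adjacent lattice points on the other boundary line, and treats quadrilaterals by the analogous construction using lines through the lattice points on both boundary lines. The only difference is packaging: rather than passing to the limit in the coefficients, the paper fixes a point violating the split cut by $\epsilon$ and exhibits a single triangle of the family, wide and flat enough to contain the perturbed boundary points $f + r^i/(\psi_S(r^i)+\delta)$, whose inequality still cuts that point off by $\epsilon/2$ --- i.e.\ your limiting argument made quantitative.
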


This theorem may seem counter-intuitive because some split
inequalities are facets of $R_f^k$. However, we show that any split
inequality can be obtained as the limit of an infinite collection of
triangle inequalities. What we will show is that, if a point is cut
off by a split inequality, then it is also cut off by some triangle
inequality. Consequently, the intersection of \emph{all} triangle
inequalities is contained in the split closure. The same is true of
the quadrilateral closure.

\begin{example}
\label{TfEx} As an illustration, consider the simple example in
Figure \ref{FIG:TfEx}. The split inequality is $s_1 \ge 1$ and the
split closure is given by $S_f^2 = \{(s_1, s_2) \ | \ s_1 \ge 1, s_2
\ge 0\}$. All triangle inequalities are of the form $a s_1 + b s_2
\ge 1$ with $a \ge 1$ and $b > 0$. The depicted triangle inequality
is of the form $s_1 + 2 s_2 \ge 1$ and by moving the corner $v$
closer to the boundary of the split, one can get a triangle
inequality of the form $s_1 + b s_2 \ge 1$ with $b$ tending to 0,
but remaining positive. Note that the split inequality can not be
obtained as a positive combination of triangle inequalities, but any
point cut by the split inequality is cut by one of the triangle
inequalities. As a result, the triangle closure is $T_f^2 = \{(s_1,
s_2) \ | \ s_1 + b s_2 \ge 1, \mbox{\ for \  all \ } b > 0, s_1 \ge
0, s_2 \ge 0\} = S_f^2$. \flushright\qed
\end{example}

\begin{figure}[htbp]
\centering \scalebox{0.45}{\epsfig{file=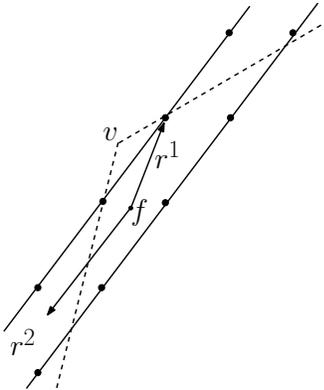}}
\caption{Illustration for Example \ref{TfEx}} \label{FIG:TfEx}
\end{figure}

We further study the strength of the triangle closure and
quadrilateral closure in the sense defined in Section
\ref{SUB:MOTIVATION}. We first compute the strength of a single Type
1 triangle facet as $f$ varies in the interior of the triangle,
relative to the entire split closure.

\begin{theorem}
\label{THM:T1split} Let $T$ be a Type 1 triangle as depicted in
Figure \ref{FIG:T1split}. Let $f$ be in its interior and assume that
the set of rays $\{r^1,\ldots ,r^k\}$ contains rays pointing to the
three corners of $T$. Let $\sum_{i=1}^{k} \psi(r^i) s_i\ge 1$ be the
inequality generated by $T$. The value
$$
\min\left\{ \sum_{i=1}^k \psi(r^i)s_i \ : \; s \in S_f^k \right\}
$$
is a piecewise linear function of $f$ for which some level curves
are depicted in Figure \ref{FIG:T1split}. This function varies from
a minimum of $\frac{1}{2}$ in the center of $T$ to a maximum of
$\frac{2}{3}$ at its corners.
\end{theorem}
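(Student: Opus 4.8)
The plan is to reduce the quantity to a finite linear program by a unimodular normalization and then evaluate it in closed form, bracketing it by matching primal and dual bounds. First I would apply a unimodular transformation so that $T=\conv\{(0,0),(2,0),(0,2)\}$, whose edge midpoints $(1,0),(0,1),(1,1)$ realize Theorem~\ref{THM:maxconv}(ii); every Type~1 triangle is a unimodular image of this one, and such maps preserve lattice-free sets, split and triangle cuts, and the functional values, so this is without loss of generality. Writing $f=(f_1,f_2)$ with $f_1,f_2>0$ and $f_1+f_2<2$, the gauge of $T$ is $\psi_T(r)=\max\{-r_1/f_1,\ -r_2/f_2,\ (r_1+r_2)/(2-f_1-f_2)\}$, so along the three rays pointing to the corners one has $\psi_T=1$ and the objective $\sum_i\psi_T(r^i)s_i$ restricted to those rays is simply $s_1+s_2+s_3$. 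By Theorem~\ref{THM:goemans} the quantity in the statement is exactly $z(f):=\min\{\sum_i\psi_T(r^i)s_i:\ s\in S_f^k\}$, an a priori semi-infinite LP over the split closure.

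Next I would identify the relevant splits. For $S=\{c\le\pi x\le c+1\}$ with $f$ in its interior, $\psi_S(r)=\max\{\pi r/(c+1-\pi f),\ -\pi r/(\pi f-c)\}$, which I would evaluate at the three corner rays for the coordinate normals $\pi\in\{(1,0),(0,1),(1,1)\}$, the integer offsets being fixed by which of the four median sub-triangles contains $f$. In the central sub-triangle ($f_1<1$, $f_2<1$, $f_1+f_2>1$) the three split cuts on the corner rays read $s_1+\tfrac{2-f_1}{1-f_1}s_2+s_3\ge1$, $s_1+s_2+\tfrac{2-f_2}{1-f_2}s_3\ge1$, and $\tfrac{f_1+f_2}{f_1+f_2-1}s_1+s_2+s_3\ge1$; minimizing $s_1+s_2+s_3$ with all three tight, and using that the reciprocals of the three excess coefficients sum to $(1-f_1)+(1-f_2)+(f_1+f_2-1)=1$, gives the constant value $\tfrac12$. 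In the corner sub-triangle at the origin ($f_1+f_2<1$) the third split becomes $0\le x_1+x_2\le1$, the optimal solution drops the ray to the origin, and one gets $\tfrac{2-u}{3-u}$ with $u=f_1+f_2$, running from $\tfrac12$ on the median $u=1$ to $\tfrac23$ at the corner; the other two corners are handled by the unimodular symmetries of $T$. Thus the value is $\tfrac12$ on the central sub-triangle and rises to $\tfrac23$ at each corner, with level sets that are line segments parallel to the edges, as in Figure~\ref{FIG:T1split}.

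To upgrade this finite computation into a statement about the full closure $S_f^k$ (all splits, all rays), I would prove matching bounds. For the lower bound I would exhibit nonnegative multipliers $y_S$ on the chosen splits with $\sum_S y_S\,\psi_S(r)\le\psi_T(r)$ for \emph{every} direction $r$: since each $\sum_S y_S\psi_S$ is sublinear, its unit sublevel set is convex, and because the multipliers are chosen so that this sum equals $1$ at the three corners of $T$, that sublevel set contains all three vertices and hence $T$ itself, which forces the domination $\sum_S y_S\psi_S\le\psi_T$ everywhere. Weak duality then yields $z(f)\ge\sum_S y_S$ regardless of any additional rays, which is exactly why the value depends only on $f$. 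For the upper bound I would substitute the explicit corner-ray point found above and check that it lies in $S_f^k$. The main obstacle is precisely this last verification: one must show the candidate point satisfies the split inequality for \emph{every} integral normal $\pi$, not only the three used to build the LP. I would reduce this to a one-parameter family of elementary inequalities by writing $\pi r^i=n_i-\{\pi f\}$ with $n_i\in\mathbb{Z}$ determined by $\pi$ and the integral vertices, so that "$s$ satisfies the $\pi$-split'' becomes an inequality in the integers $n_i$ and the fractional part $\{\pi f\}\in(0,1)$; normals of large norm produce large coefficients and are slack, leaving only finitely many to check. Proving that the binding splits are exactly the three coordinate splits, together with the clean sublevel-set domination argument, is the crux; the closed-form evaluation and the reduction to the standard triangle are then routine.
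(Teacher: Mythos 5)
Your normalization, your choice of the three splits $0\le x_1\le 1$, $0\le x_2\le 1$, $1\le x_1+x_2\le 2$, and your closed-form LP values ($\tfrac12$ in the central sub-triangle, $1-\tfrac{1}{3-f_1-f_2}$ in a corner sub-triangle) all agree with the paper. Your lower-bound argument is sound and genuinely different from the paper's: the paper first reduces to the three corner rays via Theorem~\ref{THM:corner} and Remark~\ref{rem:simplify}, whereas you take optimal dual multipliers $y_S$ for the three-split LP, note that $\sum_S y_S\psi_S$ is sublinear and at most $1$ on the corner rays, conclude $\sum_S y_S\psi_S\le\psi_T$ everywhere from convexity of the unit sublevel set, and obtain $\min\ge\sum_S y_S$ for an arbitrary ray set by weak duality. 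That disposes of both the extra rays and the restriction to finitely many splits in one stroke, and is a nice alternative to the paper's reduction.

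The gap is the upper bound, and you have flagged it yourself: you must exhibit a point of $S_f^k$ --- one satisfying the inequality of \emph{every} split containing $f$ in its interior, not only the three used to build the LP --- whose objective value equals the LP value. Your sketch (``normals of large norm produce large coefficients and are slack, leaving only finitely many to check'') is not a proof: for a split $c\le\pi x\le c+1$ the two widths $c+1-\pi f$ and $\pi f-c$ sum to $1$, so as $\|\pi\|$ grows only \emph{some} of the coefficients $\psi_S(r^i)$ need blow up; in the corner case your candidate has $s_1^*=0$, so slackness must be established for the specific rays carrying positive weight; and the finitely many surviving normals are left entirely unexamined. The paper closes exactly this hole with Lemmas~\ref{Splits123} and~\ref{Splits23}: every split inequality is dominated, coefficient-wise on the corner rays, by Split~1, 2 or 3 (respectively by Split~2 or~3 in the corner sub-triangle), so the three-split LP optimum automatically lies in the split closure. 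The proof is a short geometric argument: if $S$ is undominated, the boundary points $z^1,z^2,z^3$ of the corner rays with $T_I$ must lie in the interior of $S$, and applying Remark~\ref{LEM:OppSides} to the collinear triples through the integral points $y^1,y^2,y^3$ forces these three points pairwise onto opposite shores of $S$, which is impossible with only two shores. You need to supply this domination lemma, or a completed version of your slackness argument, for the proof to stand.
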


\begin{figure}[htbp]
\centering \scalebox{0.7}{\epsfig{file=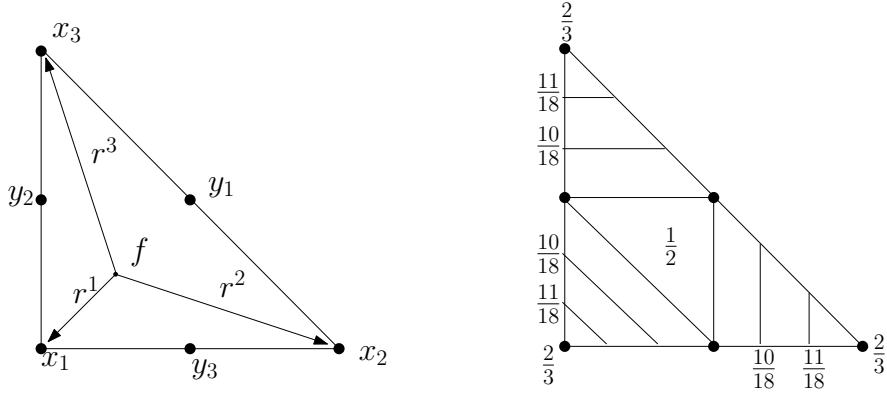}}
\caption{Illustration for Theorem
\ref{THM:T1split}}\label{FIG:T1split}
\end{figure}

Next we show that both the triangle closure and the quadrilateral
closure are good approximations of the integer hull $R_f^k$ in the
sense that

\begin{theorem} \label{THM:T&Q} $\;$

$R_f^k \subseteq T_f^k \subseteq 2 R_f^k$ and

$R_f^k \subseteq Q_f^k \subseteq 2 R_f^k.$
\end{theorem}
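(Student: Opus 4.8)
The plan is to reduce the two nontrivial inclusions $T_f^k\subseteq 2R_f^k$ and $Q_f^k\subseteq 2R_f^k$ to a family of purely geometric domination statements via Theorem \ref{THM:goemans}, and to settle those by explicit constructions based on the classification of Theorem \ref{THM:maxconv}. The inclusions $R_f^k\subseteq T_f^k$ and $R_f^k\subseteq Q_f^k$ are immediate from $R_f^k=S_f^k\cap T_f^k\cap Q_f^k$, so all the content lies in the factor-$2$ outer bounds. For $T_f^k\subseteq 2R_f^k$, apply Theorem \ref{THM:goemans} with $Q:=R_f^k$ and $P:=T_f^k$: since every nontrivial facet of $R_f^k$ normalizes to $\sum_j\psi_B(r^j)s_j\ge 1$ for a maximal lattice-free set $B$ (all with $b_i=1$), it suffices to show $\inf\{\sum_j\psi_B(r^j)s_j:\ s\in T_f^k\}\ge \tfrac12$ for each such $B$. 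If $B$ is a triangle the infimum is $\ge 1$, since this is one of the inequalities defining $T_f^k$; if $B$ is a split it is again $\ge 1$ by Theorem \ref{THM:inclusion} ($T_f^k\subseteq S_f^k$). Hence the only case to treat is $B$ a quadrilateral.

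Symmetrically, for $Q_f^k\subseteq 2R_f^k$ the quadrilateral facets are free by definition of $Q_f^k$ and the split facets are free because $Q_f^k\subseteq S_f^k$ (again Theorem \ref{THM:inclusion}), so the whole difficulty concentrates on the triangle facets. For both remaining cases I would pass to a \emph{half-shrink} reformulation: for a convex body $C$ with $f\in\mathrm{int}(C)$ set $\tfrac12 C:=f+\tfrac12(C-f)$, so that positive homogeneity of the gauge gives $\psi_{\frac12 C}=2\psi_C$. Since larger sets have smaller gauges, a maximal lattice-free triangle $T$ satisfies $\psi_T\le 2\psi_B$ exactly when $T\supseteq \tfrac12 B$, and then for every $s\in T_f^k$ (with $s\ge 0$) one gets $\sum_j\psi_B(r^j)s_j\ge \tfrac12\sum_j\psi_T(r^j)s_j\ge \tfrac12$. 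Thus $T_f^k\subseteq 2R_f^k$ follows once I produce, for every maximal lattice-free quadrilateral $B$ and every $f\in\mathrm{int}(B)$, a maximal lattice-free \emph{triangle} containing $\tfrac12 B$; and $Q_f^k\subseteq 2R_f^k$ follows once I produce a maximal lattice-free \emph{quadrilateral} containing $\tfrac12 B$ for every triangle $B$.

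For the construction I would first note that $\tfrac12 B$ lies strictly inside $B$, so it contains no integer point at all, leaving room to enlarge; and since $f\in\mathrm{int}(\tfrac12 B)$, any covering set has $f$ in its interior, so the resulting inequality genuinely belongs to the relevant closure. I would then apply a unimodular transformation to place the boundary integer points of $B$ in a normal position (for a quadrilateral, as the vertices of the area-$1$ parallelogram of Theorem \ref{THM:maxconv}(iii); for a triangle, according to its type), write down an explicit convex set of the prescribed type containing $\tfrac12 B$, verify it is lattice-free, and complete it to a maximal lattice-free set of the same combinatorial type. For a quadrilateral $B$ this amounts to extending a suitable pair of sides of $\tfrac12 B$ to meet in a triangle; for a triangle $B$, to enlarging $\tfrac12 B$ into a circumscribing quadrilateral.

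The main obstacle is precisely this construction: guaranteeing that the enlargement of $\tfrac12 B$ can simultaneously be kept lattice-free \emph{and} forced to have the correct type, uniformly over all shapes of $B$ and all positions of $f$ in its interior. Lattice-freeness is delicate because extending the sides of $\tfrac12 B$ pushes toward the integer points sitting on $\partial B$, so the construction must balance how far it reaches against which of the four (respectively three) boundary integer points it risks enclosing; this is exactly where Theorem \ref{THM:maxconv}, and in particular the area-$1$ parallelogram structure of the quadrilateral case, does the real work and forces a split into subcases according to the location of $f$. I would finally check tightness, exhibiting a quadrilateral facet together with a point of $T_f^k$ (respectively a triangle facet with a point of $Q_f^k$) attaining the value $\tfrac12$, so that the constant $2$ in Theorem \ref{THM:T&Q} is best possible.
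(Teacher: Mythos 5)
Your reduction is exactly the one the paper uses: Goemans' theorem turns the outer inclusions into the statement $\inf\{\sum_j\psi_B(r^j)s_j : s\in T_f^k\}\ge\frac12$ (resp.\ over $Q_f^k$) for each facet-inducing $B$, and Theorem \ref{THM:inclusion} disposes of the split facets, leaving quadrilateral facets against $T_f^k$ and triangle facets against $Q_f^k$. The half-shrink equivalence $\psi_T\le 2\psi_B\Leftrightarrow T\supseteq f+\frac12(B-f)$ is also correct. But everything after that is a genuine gap: the entire mathematical content of the theorem is the existence, for \emph{every} $B$ and \emph{every} $f\in\mathrm{int}(B)$, of a single maximal lattice-free set of the required type containing $f+\frac12(B-f)$, and you explicitly leave that construction undone. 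This is not a routine verification. In the paper's worst case (the quadrilateral with edge parameter $t=2$ and $f_1,f_2\le\frac12$), the natural candidate triangles $T_1,T_2$ (sharing two edges with $B$) have a corner-ray coefficient as large as $2t-1=3$, i.e.\ they violate $\psi_{T}\le 2\psi_B$ by a factor $3/2$ and each alone certifies only $\frac13$; the bound $\frac12$ emerges only from the two-constraint LP in which \emph{both} inequalities are tight. So your single-set claim, if true, requires sets different from the ones that arise naturally, and you give no construction or proof that they exist uniformly in $B$ and $f$.

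The risk is sharpest in the direction $Q_f^k\subseteq 2R_f^k$ for Type 3 triangle facets. The paper never exhibits a dominating quadrilateral there at all — it explicitly remarks that Type 3 triangles cannot be well approximated by quadrilaterals — and instead routes through the relaxation $\bar T_f^k\supseteq Q_f^k$ generated by Type 1 and Type 2 triangles (those being limits of quadrilaterals, as in the proof of Theorem \ref{THM:inclusion}), then dominates half the Type 3 inequality by a convex combination of \emph{two} Type 2 triangle inequalities, again via an explicit LP over two position regimes for $f$. Your plan instead demands a single \emph{maximal lattice-free quadrilateral} (with its four integral points forming an area-$1$ parallelogram, one in the relative interior of each edge, per Theorem \ref{THM:maxconv}) containing $f+\frac12(B-f)$ for every Type 3 triangle $B$; note also that enlarging a lattice-free quadrilateral to a maximal lattice-free set may well produce a triangle or a split rather than a quadrilateral, so even "complete it to a maximal set of the same combinatorial type" is not automatic. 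In short: the logical skeleton is sound and matches the paper's, but the theorem is reduced to an unproven, and at least partly doubtful, geometric covering claim, whereas the paper's proof replaces single-set domination by domination with pairs of inequalities, which is strictly weaker to establish and is what its explicit LP computations actually deliver.
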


Finally we show that the split closure may not be a good
approximation of the integer hull.

\begin{theorem} \label{THM:T&Q_2} For any $\alpha > 1$, there is a
choice of $f$, $r^1, \ldots , r^k$ such that

$S_f^k \not\subseteq \alpha R_f^k.$
\end{theorem}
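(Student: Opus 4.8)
The plan is to reduce the statement to a single numerical estimate via Theorem~\ref{THM:goemans} and then to exhibit a family of configurations realizing it. Apply Theorem~\ref{THM:goemans} with $P=S_f^k$ and $Q=R_f^k$; this is legitimate because $R_f^k\subseteq S_f^k$ and $R_f^k$ is a polyhedron of blocking type with every nontrivial facet of the form $\sum_i\psi(r^i)s_i\ge 1$ (so $b_i=1$). The theorem then says that the least $\alpha\ge 1$ with $S_f^k\subseteq\alpha R_f^k$ equals $\max_F 1/\inf\{\,\sum_i\psi_F(r^i)s_i : s\in S_f^k\,\}$, the maximum being over the facets $F$ of $R_f^k$. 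For a facet coming from a split the inf is $1$ (that inequality is already imposed on $S_f^k$), so the maximum can only be large on a facet arising from a maximal lattice-free \emph{triangle} or \emph{quadrilateral}. Thus, to prove the theorem it suffices, for each prescribed $\alpha>1$, to build one configuration together with a single non-split facet-defining inequality $\sum_i\psi(r^i)s_i\ge 1$ and one point $\bar s\in S_f^k$ at which that inequality takes value strictly below $1/\alpha$.

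Next I would construct a one-parameter family of configurations indexed by $t\to\infty$ in which a maximal lattice-free triangle $T_t$ is made long and thin, while staying lattice-free. Since Type~1 triangles already have split ratio at most $2$ by Theorem~\ref{THM:T1split} (and this ratio is invariant under unimodular maps, so mere shearing does not help), $T_t$ must be genuinely non-Type~1: I would take an elongated Type~2 ``sliver,'' with a short base carrying its two boundary lattice points, the two edges incident to the receding fractional apex each threading through a single lattice point, and the apex sent off to infinity along a direction that keeps the interior lattice-free. I include among the rays those pointing from $f$ to the three corners of $T_t$, so that $T_t$ is facet-defining for $R_f^k$ by the facetness conditions of Theorem~\ref{THM:mainRfk}; normalizing these corner rays the triangle inequality reads $s_1+s_2+s_3\ge 1$. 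The governing intuition is that the corner ray toward the far apex is a long direction of $T_t$, so its coefficient $\psi_{T_t}$ is tiny, whereas no single split can be strong in that same long direction.

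The crux is to produce a point $\bar s_t\in S_f^k$ on which the triangle cut is nearly zero. I would choose $\bar s_t$ as a combination of the corner directions weighted toward the long axis of $T_t$ and verify two things. The easy half is that $\sum_i\psi_{T_t}(r^i)\,\bar s_{t,i}\to 0$ as $t\to\infty$, which follows because the apex-directed ray becomes an ever-longer direction of $T_t$ and hence carries vanishing coefficient. The hard half, and the main obstacle, is showing $\bar s_t\in S_f^k$, i.e. bounding $\inf_S\sum_i\psi_S(r^i)\,\bar s_{t,i}$ from below by $1$ uniformly over \emph{all} splits $S$ containing $f$. The real danger is a split whose two bounding lattice lines are (nearly) parallel to the long axis of $T_t$: for such a split the coefficients on the long-direction corner rays collapse toward $0$, and if all three collapsed the split would itself cut $\bar s_t$ off. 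The configuration must therefore be arranged, through the placement of the boundary lattice points and the choice of the apex direction relative to the integer lattice, so that any split that is weak on one corner ray is forced to be strong on another; a lattice-width-one slab cannot simultaneously hug the long axis, contain $f$, and be loose on every corner direction at once. Making this precise amounts to reducing to the finitely many split normals that can be binding at $\bar s_t$ and checking the inequality $\sum_i\psi_S(r^i)\bar s_{t,i}\ge 1$ in each case; this uniform lower bound is where essentially all the work lies.

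Finally I would assemble the pieces. Combining the two halves gives $\inf\{\sum_i\psi_{T_t}(r^i)s_i : s\in S_f^k\}\le\sum_i\psi_{T_t}(r^i)\bar s_{t,i}\to 0$, so by the Goemans formula of Theorem~\ref{THM:goemans} the least $\alpha$ with $S_f^k\subseteq\alpha R_f^k$ tends to $+\infty$ along the family. Hence for any given $\alpha>1$ I fix $t$ large enough that this least value exceeds $\alpha$, which means precisely that $S_f^k\not\subseteq\alpha R_f^k$ for that configuration, proving the theorem. As a sanity check, this is consistent with Theorem~\ref{THM:T\&Q} and Theorem~\ref{THM:inclusion}: the strong triangle facet that defeats every split is exactly one of the cuts keeping $T_f^k$ within factor $2$ of $R_f^k$, so the blow-up is attributable to the split closure's inability to reproduce it rather than to any weakness of $R_f^k$ itself.
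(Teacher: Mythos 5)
Your overall strategy coincides with the paper's: reduce via Theorem~\ref{THM:goemans} to showing that $\inf\{\sum_i\psi_T(r^i)s_i : s\in S_f^k\}$ can be made arbitrarily small for a facet-defining elongated Type~2 triangle with its corner rays present and the facet normalized to $\sum_i s_i\ge 1$ (the paper elongates the edge carrying many lattice points rather than sending the apex to infinity, but that is a minor geometric difference). However, the step you yourself describe as ``where essentially all the work lies'' --- certifying that the witness point $\bar s_t$ lies in $S_f^k$, i.e.\ a uniform lower bound $\sum_i\psi_S(r^i)\bar s_{t,i}\ge 1$ over \emph{all} splits $S$ --- is precisely the content you do not supply. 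Your proposed route to it (``reducing to the finitely many split normals that can be binding at $\bar s_t$'') is not an argument: there are infinitely many splits containing $f$ in their interior, and nothing in your sketch identifies a finite dominating subfamily or turns the heuristic ``a slab weak on one corner ray must be strong on another'' into a quantitative bound. As written, the proof is incomplete at its decisive step.

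The paper closes this gap with a specific device that is absent from your proposal. It places $f$ in the relative interior of a segment $y^1y^2$ joining two lattice points at distance~$1$, so that both boundary lines of any split containing $f$ must cross that segment. It then introduces \emph{pseudo-splits} (slabs between parallel lines through $y^1$ and $y^2$, in general not lattice-free), observes that each split inequality is dominated by the pseudo-split of the same direction, and proves in Lemma~\ref{lem:conv_comb} --- via the elementary convex-combination Lemma~\ref{LEM:LConv} and Corollary~\ref{COR:LConv} --- that every pseudo-split inequality is a convex combination of the $k$ pseudo-split inequalities whose directions are parallel to the rays $r^1,\ldots,r^k$. This replaces the infinite family of split constraints by an explicit LP with $k$ rows whose coefficients are computable in closed form; exhibiting a feasible point of that LP with objective value $\frac{1}{t_1-t_3}\bigl(\frac{1-f_2}{\mu_1}+\frac{f_2}{\mu_3}\bigr)\to 0$ then yields the upper bound on $z_{SPLIT}$ and, with Theorem~\ref{THM:goemans}, the theorem. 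You would need this lemma, or an equivalent finite domination argument, to make your outline into a proof.
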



These results provide additional support for the recent interest in
cuts derived from two or more rows of an integer program
\cite{alww,bc,cm,dw,esp, go07}.

\section{Proof of Theorem
\ref{THM:goemans}}\label{SEC:goemans_proof}

\begin{proof}
Let
$$\alpha = \max_{i=1, \ldots, m} \left\{\frac{b_i}{\inf\{ a^i x \ : \ \ x \in P\}} \ :
\ b_i > 0\ \right\}.$$
We first show that $P \subseteq \alpha Q$. This holds when $\alpha =
+ \infty$ by definition of $\alpha Q$. Therefore we may assume $1 \le
\alpha < + \infty$.  Consider any point $p\in P$. The inequalities
of $\alpha Q$ are of the form $\alpha a^i x \geq b_i$ with $a^i \geq 0$
and $b_i \geq 0$. If $b_i = 0$, then since $p \in P \subseteq
\mathbb{R}^n_+$, $a^i p \geq 0$ and hence this inequality is
satisfied. If $b_i > 0$, then we know from the definition of
$\alpha$ that

\[
\frac{b_i}{\inf\{ a^i x \ : \ \ x \in P\}} \leq \alpha .
\]
This implies
\[
b_i \leq \alpha \inf\{ a^i x \ : \ \ x \in P\} \leq \alpha a^i p .
\]
Therefore, $p$ satisfies this inequality.

We next show that for any $1 \le \alpha' < \alpha$, $P \not\subseteq
\alpha' Q$. Say $\alpha = \frac{b_j}{\inf\{ a^j x \ : \ \ x \in
P\}}$ (i.e. the maximum, possibly $+ \infty$, is reached for index
$j$). Let $\delta = \frac{b_j}{\alpha'} - \frac{b_j}{\alpha}$. We
have $\delta > 0$. From the definition of $\alpha$ we know that
$\inf\{ a^j x \ : \ \ x \in P\} = \frac{b_j}{\alpha}$. Therefore,
there exists $p \in P$ such that $a^j p < \frac{b_j}{\alpha} +
\delta = \frac{b_j}{\alpha'}$. So $\alpha' a^j p < b_j$ and hence $p
\not\in \alpha' Q$.

\end{proof}

\section{Split closure vs. triangle and quadrilateral
closures}

In this section, we present the proof of
Theorem~\ref{THM:inclusion}.

\begin{proof} (Theorem \ref{THM:inclusion}).
We show that if any point $\bar{s}$ is cut off by a split
inequality, then it is also cut off by some triangle inequality.
This will prove the theorem.

Consider any split inequality $\sum_{i=1}^k \psi_S(r^i)s_{i} \ge 1$
(see Figure~\ref{fig:split_with triangle}) and denote by $L_1$ and
$L_2$ its two boundary lines. Point $f$ lies in some parallelogram
of area 1 whose vertices $y^1, y^2, y^3, \textrm{and }y^4$ are
lattice points on the boundary of the split.

Assume without loss of generality that $y^1$ and $y^2$ are on $L_1$.
Consider the family $\mathcal{T}$ of triangles whose edges are
supported by $L_2$ and by two lines passing through $y^1$ and $y^2$
and whose interior contains the segment $y^1y^2$. See
Figure~\ref{fig:split_with triangle}. Note that all triangles in
$\mathcal{T}$ are of Type $2$. For $T \in \mathcal{T}$ we will
denote by $\psi_T$ the minimal function associated with $T$.

\begin{figure}[htbp]
   \begin{center}
     \scalebox{.5}{\epsfig{file=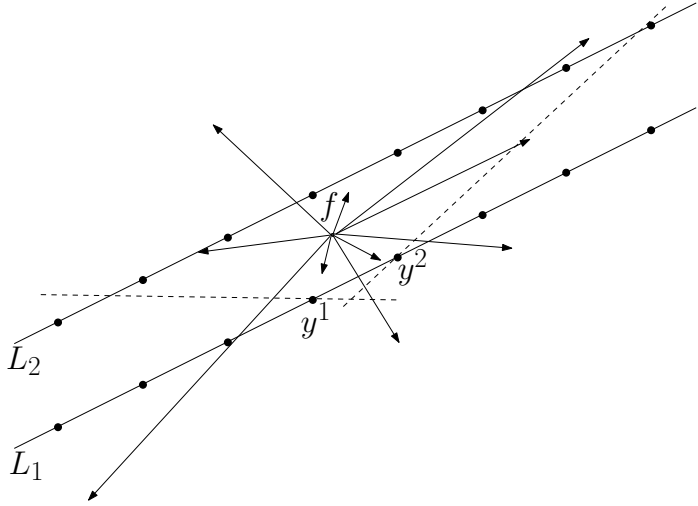}}
\caption{Approximating a split inequality with a triangle
inequality. The triangle is formed by $L_2$ and the two dashed lines}
\label{fig:split_with triangle}
   \end{center}
\end{figure}

By assumption, $\sum_{i=1}^{k} \psi_S(r^i)\bar{s}_i < 1$. Let
$\epsilon = 1 - \sum_{i=1}^{k} \psi_S(r^i)\bar{s}_i$.

We now make the following simple observation. Given a finite set $X$
of points that lie in the interior of the split $S$, we can find a
triangle $T \in \mathcal{T}$ as defined above, such that all points
in $X$ are in the interior of $T$. To see this, consider the convex
hull $\mathcal{C}(X)$ of $X$. Since all points in $X$ are in the
interior of $S$, so is $\mathcal{C}(X)$. This implies that the
tangent lines from $y^1$ and $y^2$ to $\mathcal{C}(X)$ are not
parallel to $L_1$. Two of these four tangent lines along with $L_2$
of $S$ form a triangle in $\mathcal{T}$ with $X$ in its interior.

Let $s_{max} = \max \{\bar{s}_i \ : \ i = 1 \ldots, k\}$ and define
$\delta = \frac{\epsilon}{2 \cdot k \cdot s_{max}} > 0$. For every
ray $r^i$ define $c(r^i) = \psi_S(r^i) + \delta$. Therefore, by
definition $p^i = f + \frac{1}{c(r^i)} \cdot  r^i$ is a point
strictly in the interior of $S$. Using the observation stated above,
there exists a triangle $T\in \mathcal{T}$ which contains all the
points $p^i$. It follows that the coefficient $\psi_T(r^i)$ for any
ray $r^i$ is less than or equal to $c(r^i)$.

We claim that for this triangle $T$ we have $\sum_{i=1}^{k}
\psi_T(r^i)\bar{s}_i < 1$. Indeed,

\begin{eqnarray*}
\sum_{i=1}^{k} \psi_T(r^i)\bar{s}_i & \leq & \sum_{i=1}^k c(r^i)\bar{s}_i \\
 & = & \sum_{i=1}^k(\psi_S(r^i) + \delta) \bar{s}_i
=  \sum_{i=1}^k\psi_S(r^i)\bar{s}_i + \sum_{i=1}^k\frac{\epsilon}{2 \cdot k \cdot s_{max}}\bar{s}_i\\
 & \leq & \sum_{i=1}^k\psi_S(r^i)\bar{s}_i + \frac{\epsilon}{2} =
1 - \frac{\epsilon}{2} <  1
\end{eqnarray*}

The first inequality follows from the definition of $c(r^i)$ and the
last equality follows from the fact that
$\sum_{i=1}^k\psi_S(r^i)\bar{s}_i = 1 - \epsilon$.

This shows that $T_f^k \subseteq S_f^k$. For the quadrilateral
closure, we also use two lines passing through $y^3$ and $y^4$ on
$L_2$ and argue similarly.
\end{proof}

Note that even though there can be a zero coefficient in a split
inequality for some ray $r$, in the proof above we exhibit a
sequence of triangle inequalities with arbitrarily small
coefficients for ray $r$. Any point cut off by the split inequality
is also cut off by a cut in the sequence.

\section{Tools}
\label{SEC:PRELIM}

\subsection{Conditions under which a maximal lattice-free convex set gives rise to a facet}

Andersen, Louveaux, Weismantel and Wolsey \cite{alww}
 characterized the facets of $R_f^k$ as arising from
 splits, triangles and quadrilaterals.
 Cornu\'ejols and Margot \cite{cm} gave a converse.
 We give this characterization in Theorem \ref{THM:mainRfk}. Roughly speaking, for a maximal lattice-free triangle or quadrilateral to give rise to a facet,
 it  has to have  its corner points on half-lines $f+\lambda r^j$ for some $j =
1, \ldots , k$ and $\lambda > 0$; or to
satisfy a certain technical condition called the ray condition.
 Although the ray condition is not central to this paper (it is only used once
 in the proof of Theorem \ref{thm:7.2}), we need to include it for technical completeness.

 Let $B_\psi$ be a maximal lattice-free split,
triangle or quadrilateral with
$f$ in its interior. For any $j=1, \ldots, k$ such that $\psi(r^j) > 0$,
let $p^j$ be the intersection  of the half-line
$f + \lambda r^j$, $\lambda \geq 0$, with the boundary of $B_\psi$.
The point $p^j$ is called the {\it boundary point} for $r^j$. Let
$P$ be a set of boundary points. We say that a point $p \in P$ is
{\it active} if it can have a positive coefficient in a convex
combination of points in $P$ generating an integral point. Note that
$p \in P$ is active if and only if $p$ is integral or there exists
$q \in P$ such that the segment $pq$ contains an integral point in
its interior. We say that an active point $p \in P$ is {\it uniquely
active} if it has a positive coefficient in {\it exactly one} convex
combination of points in $P$ generating an integral point.

Apply the following {\it Reduction Algorithm}:
\begin{itemize}
\item[0.)]
Let $P = \{ p^1, \ldots , p^k \}$.

\item[1.)]
While there exists $p \in P$ such that $p$ is active and $p$
is a convex combination of
other points in $P$, remove $p$ from $P$.
At the end of this step, $P$ contains at
most two active points on each edge of $B_\psi$ and all points of $P$ are distinct.

\item[2.)]
While there exists a uniquely active $p \in P$, remove $p$ from $P$.

\item[3.)]
If $P$ contains exactly two active points $p$ and $q$
(and possibly inactive points), remove both $p$ and $q$ from $P$.
\end{itemize}

\medskip
{\em The ray condition holds for a triangle or a quadrilateral}
if $P = \emptyset$ at termination of the Reduction Algorithm.

{\em The ray condition holds for a
split} if, at termination of the Reduction Algorithm, either $P =
\emptyset$, or $P= \{p_1, q_1, p_2, q_2\}$ with $p_1, q_1$ on one of
the boundary lines and $p_2, q_2$ on the other and both line
segments $p_1q_1$ and $p_2q_2$ contain at least two integral points.

\begin{theorem} \label{THM:mainRfk}
{\em (Cornu\'ejols and Margot \cite{cm})} The facets of $R_f^k$ are
\begin{itemize}
\item[(i)] split inequalities where the unbounded direction of $B_\psi$ is
$r^j$ for some $j = 1, \ldots , k$ and the line $f+\lambda r^j$
contains no integral point; or where $B_\psi$ satisfies the
ray condition,
\item[(ii)] triangle inequalities where the triangle $B_\psi$ has
its corner points on three half-lines $f+\lambda r^j$ for some $j =
1, \ldots , k$ and $\lambda > 0$; or where the triangle $B_\psi$
satisfies the ray condition,
\item[(iii)] quadrilateral inequalities where  the corners of $B_\psi$ are
on four half-lines $f+\lambda r^j$ for some $j = 1, \ldots , k$ and
$\lambda > 0$, and $B_\psi$ satisfies a certain ratio condition (the ratio
condition will not be needed in this paper; the interested reader is referred
to \cite{cm} for details).
\end{itemize}
\end{theorem}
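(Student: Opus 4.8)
The plan is to establish the characterization by a single dimension count on the face of $R_f^k$ cut out by the candidate inequality $\sum_{j=1}^k \psi(r^j) s_j \ge 1$. Validity of this inequality is already supplied by the intersection-cut construction of Balas \cite{bal} and by Theorem~\ref{THM:minfunc}, so the whole content is to decide when the face
$$
F = \left\{ s \in R_f^k \ : \ \sum_{j=1}^k \psi(r^j) s_j = 1 \right\}
$$
is a facet, i.e.\ has dimension $k-1$. Because $F$ lies on the hyperplane $\{ s : \sum_{j} \psi(r^j) s_j = 1\}$, which misses the origin, this amounts to showing that the vertices and recession directions of $F$ together span that hyperplane. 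First I would split the rays into $J_+ = \{ j : \psi(r^j) > 0 \}$, for which the half-line $f + \lambda r^j$ meets the boundary of $B_\psi$ in the boundary point $p^j = f + \lambda_j r^j$, and $J_0 = \{ j : \psi(r^j) = 0 \}$, whose rays are unbounded directions of $B_\psi$; note $J_0 = \emptyset$ whenever $B_\psi$ is a triangle or quadrilateral, so $J_0$ is purely a split phenomenon and contributes recession directions $e_j$ to $F$.

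The key step is an explicit description of the tight feasible points. Writing $\psi(r^j) = 1/\lambda_j$ for $j \in J_+$, a feasible $s$ lies on $F$ exactly when
$$
f + \sum_{j=1}^k s_j r^j \ = \ \sum_{j \in J_+} \mu_j\, p^j + \sum_{j \in J_0} s_j r^j, \quad \mu_j = \tfrac{1}{\lambda_j} s_j \ge 0, \ \sum_{j \in J_+} \mu_j = 1,
$$
so that, up to the unbounded split directions, every vertex of $F$ comes from a convex combination of boundary points $p^j$ that lands on a lattice point. This is exactly the object tracked by the notions of \emph{active} and \emph{uniquely active} points, and it is the bridge between the geometry of $B_\psi$ and the affine rank of the tight set. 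I would then read the ``corner'' hypotheses in (i)--(iii)---corners of $B_\psi$ sitting on half-lines $f + \lambda r^j$---as the statement that each edge contributes, via such a convex combination, a tight vertex of $F$; in generic position these vertices together with the $|J_0|$ recession directions already span dimension $k-1$.

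The remaining and most delicate step is to prove that the Reduction Algorithm faithfully tracks the affine rank of the tight set, so that termination with $P = \emptyset$ is equivalent to $\dim F = k-1$. I would show that Step~1 removes only boundary points that are convex combinations of the surviving ones and hence impose no independent tight equation; that Step~2 removes a uniquely active point, whose unique generating combination is already forced by its partners and therefore costs one unit of rank and one unit of the count simultaneously; and that Step~3 disposes of the final dependent pair. Tracing this through the three geometries---three edges each carrying one interior lattice point for a triangle, four edges together with the area-one parallelogram for a quadrilateral, and the two parallel boundary lines for a split---then shows that an empty $P$ certifies full face dimension, whereas a surviving point exhibits a genuine linear dependence among all tight points and hence a proper face. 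The split requires the separate alternative $P = \{p_1,q_1,p_2,q_2\}$ recorded in its ray condition, because there the $J_0$ recession directions enlarge the recession cone of $F$ and two lattice points on each boundary line can replace the bounded directions that a triangle would use.

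I expect the main obstacle to be the necessity half of this equivalence: showing that whenever the stated condition fails, all tight points satisfy a common linear relation beyond $\sum_{j} \psi(r^j) s_j = 1$, so that the inequality decomposes as a nonnegative combination of other valid inequalities and is not a facet. Controlling this forces a careful case analysis of how integral points can lie on the edges of each maximal lattice-free type---in particular ruling out accidental extra lattice points that would create unexpected active combinations---and, for quadrilaterals, handling the metric ratio condition, which is genuinely stronger than the purely combinatorial emptiness of $P$ that suffices for triangles and splits.
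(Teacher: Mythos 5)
This theorem is not proved in the paper at all: it is quoted verbatim from Cornu\'ejols and Margot \cite{cm}, so there is no internal proof to compare your attempt against. Judged on its own terms, your write-up is a strategy outline rather than a proof, and the parts it defers are precisely the content of the theorem. You correctly identify the right frame (validity is free from the intersection-cut construction, so everything reduces to deciding when the tight face $F$ has dimension $k-1$, and the tight points of $F$ correspond to convex combinations of the boundary points $p^j$ landing on lattice points, which is what ``active'' encodes). But the two load-bearing claims are only announced, not established: (1) that the Reduction Algorithm's termination with $P=\emptyset$ is \emph{equivalent} to $\dim F = k-1$ --- you say you ``would show'' each step preserves the relevant rank count, but none of the three steps is actually verified, and Step~2 in particular (that removing a uniquely active point costs exactly one unit of rank) is not obvious and needs the lattice-free geometry of each type; and (2) the necessity half, i.e.\ that when the corner/ray conditions fail every tight point satisfies an extra linear relation so the inequality is dominated. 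You explicitly flag (2) as ``the main obstacle'' and then stop, which means the argument never closes.

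Two further specific holes: the sufficiency claim that the tight vertices arising from corners on ray half-lines ``in generic position already span dimension $k-1$'' cannot be left generic --- affine independence of those tight points has to be checked using the specific structure of Theorem~\ref{THM:maxconv} (one interior lattice point per edge for triangles, the area-one parallelogram for quadrilaterals), and this is where the case analysis actually lives. And for case (iii) the ratio condition is simply not engaged with: you note it is ``genuinely stronger'' than emptiness of $P$, but the theorem asserts it is necessary and sufficient for a quadrilateral with all four corner rays present to give a facet (rather than being dominated by triangle inequalities, as the paper's remark after the theorem hints), and your dimension count as written would wrongly certify such dominated quadrilaterals as facets. If you want a complete argument you will need to reproduce the case analysis of \cite{cm}; the dimension-counting skeleton you propose is a reasonable organizing principle for it, but it is a skeleton only.
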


Note that the same facet may arise from different convex sets. For
example quadrilaterals for which the ray condition holds define
facets, but there is always also a triangle that defines the same
facet, which is the reason why there is no mention of the ray
condition in (iii) of Theorem  \ref{THM:mainRfk}.

\subsection{Reducing the number of rays in the analysis}

The following technical theorem will be used in the proofs of
Theorems~\ref{THM:T1split}, \ref{THM:T&Q} and \ref{THM:T&Q_2}, where
we will be applying Theorem~\ref{THM:goemans}.

\begin{theorem}
\label{THM:corner} Let $B_1, \ldots, B_m$ be lattice-free convex
sets with $f$ in the interior of $B_p$, $p=1,\ldots,m$. Let $R_c
\subseteq \{1,\ldots,k\}$ be a subset of the ray indices such that
for every ray $r^j$ with $j\not\in R_c$, $r^j$ is the convex
combination of some two rays in $R_c$. Define

\[
\begin{array}{rlcl}
  z_1 =    \;\; \min &
\displaystyle
\sum_{i=1}^k s_i \\[0.1in]
& \displaystyle \sum_{i=1}^k \psi_{B_p}(r^i) s_i & \geq 1 &
\mbox{ for \ } p = 1, \ldots, m  \\[0.1in]
          &  s \geq 0
\end{array}
\]

and

\[
\begin{array}{rlcl}
  z_c = \;\; \min &
\displaystyle \sum_{i \in R_c} s_i \\[0.1in]
& \displaystyle \sum_{i \in R_c} \psi_{B_p}(r^i) s_i & \geq 1 &
\mbox{ for \ } p = 1,\ldots,m \\[0.1in]
          &  s \geq 0 \ .
\end{array}
\]

Then $z_1 = z_c$.
\end{theorem}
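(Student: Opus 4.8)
The plan is to prove the two inequalities $z_1 \le z_c$ and $z_c \le z_1$ separately. The first is immediate. Any $s$ feasible for the $z_c$ program can be extended to a point of the $z_1$ program by setting $s_i = 0$ for all $i \notin R_c$; this extension satisfies every constraint, since $\sum_{i=1}^k \psi_{B_p}(r^i)s_i = \sum_{i\in R_c}\psi_{B_p}(r^i)s_i \ge 1$, and it has the same objective value $\sum_{i=1}^k s_i = \sum_{i\in R_c}s_i$. Hence $z_1 \le z_c$. (Both minima are attained: each program is feasible, as scaling any single ray with $\psi_{B_p}>0$ produces a feasible point, and the objective is bounded below by $0$.)

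For the reverse inequality $z_c \le z_1$, the key ingredient is that each $\psi_{B_p}$ is sublinear: it is positively homogeneous and convex, being the gauge (intersection-cut) function of the convex set $B_p$ with $f$ in its interior, as in the construction following Theorem~\ref{THM:minfunc}. For every $j \notin R_c$ fix, by hypothesis, an expression $r^j = \lambda_j r^{a_j} + (1-\lambda_j)r^{b_j}$ with $a_j,b_j \in R_c$ and $\lambda_j \in [0,1]$. Sublinearity then gives, for every $p$,
\[
\psi_{B_p}(r^j) \;\le\; \lambda_j\,\psi_{B_p}(r^{a_j}) + (1-\lambda_j)\,\psi_{B_p}(r^{b_j}).
\]
I would then take an optimal solution $s^*$ of the $z_1$ program and redistribute the weight of each non-corner ray onto its two defining corners, defining a point $\tilde s$ supported on $R_c$ by
\[
\tilde s_i \;=\; s^*_i \;+\!\! \sum_{j\notin R_c,\; a_j=i}\!\! \lambda_j\, s^*_j \;+\!\! \sum_{j\notin R_c,\; b_j=i}\!\! (1-\lambda_j)\, s^*_j \qquad (i \in R_c).
\]
Since $s^*\ge 0$ and $\lambda_j \in [0,1]$, we have $\tilde s \ge 0$.

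Two verifications complete the argument. First, the objective is preserved exactly: because the transfer coefficients $\lambda_j$ and $1-\lambda_j$ sum to $1$, the total weight satisfies $\sum_{i\in R_c}\tilde s_i = \sum_{i\in R_c}s^*_i + \sum_{j\notin R_c}s^*_j = \sum_{i=1}^k s^*_i = z_1$. Second, $\tilde s$ is feasible for the $z_c$ program: for each $p$, grouping the redistributed terms and applying the displayed sublinearity inequality (legitimately, since $s^*_j \ge 0$) yields
\[
\sum_{i\in R_c}\psi_{B_p}(r^i)\,\tilde s_i \;\ge\; \sum_{i\in R_c}\psi_{B_p}(r^i)\,s^*_i + \sum_{j\notin R_c}\psi_{B_p}(r^j)\,s^*_j \;=\; \sum_{i=1}^k \psi_{B_p}(r^i)\,s^*_i \;\ge\; 1.
\]
Thus $\tilde s$ is feasible with objective $z_1$, giving $z_c \le z_1$, and together with the first step we conclude $z_1 = z_c$.

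I expect no genuine obstacle beyond writing the transfer map down cleanly; the essential content is the sublinearity of $\psi_{B_p}$. The one point that warrants care is that the redistribution moves weight in the direction for which sublinearity cooperates: replacing a ray by a convex combination of corners can only raise the left-hand side of each constraint, hence preserves feasibility, while the pure counting objective $\sum_i s_i$ is invariant under any redistribution whose coefficients sum to one. It is precisely this mismatch—objective governed by the convexity coefficients, feasibility governed by sublinearity—that makes the two bounds meet.
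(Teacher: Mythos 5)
Your proposal is correct and follows essentially the same argument as the paper: both rely on the convexity (sublinearity) of the functions $\psi_{B_p}$ from Theorem~\ref{THM:minfunc} to redistribute the weight of each non-corner ray onto its two defining rays in $R_c$, which preserves the objective and can only increase the left-hand side of each constraint. The only difference is presentational — you perform the redistribution for all $j \notin R_c$ simultaneously, while the paper eliminates one ray at a time and iterates.
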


\begin{proof}
Assume there exists $j \not \in R_c$ and $r^1, r^2$ are the rays in
 $R_c$ such that $r^j = \lambda r^1 + (1-\lambda)r^2$ for some $0 <
\lambda <1$. Let $K = \{1, \ldots, k\} - j$ and define

\[
\begin{array}{rlcl}
  z_2 =    \;\; \min & \displaystyle \sum_{i \in K} s_i \\[0.1in]
         &       \displaystyle \sum_{i\in K} \psi_{B_p}(r^i) s_i & \geq 1 &
\mbox{ for \ }p = 1, \ldots, m  \\[0.1in]
          &  s \geq 0 \ .
\end{array}
\]
We first show that $z_1 = z_2$. Applying the same reasoning
repeatedly to all the indices not in $R_c$ yields the proof of the
theorem.

Any optimal solution for the LP defining $z_2$ yields a feasible
solution for the one defining $z_1$ by setting $s_j = 0$, implying
$z_1 \leq z_2$. It remains to show that $z_1 \geq z_2$.

Consider any point $\hat{s}$ satisfying $\sum_{i=1}^k
\psi_{B_p}(r^i)\hat{s}_i \geq 1$ for every $p \in \{1,\ldots,m\}$.
Consider the following values $\bar{s}$ for the variables
corresponding to the indices $t \in K$.

\[
\bar{s}_t =  \left\{\begin{array}{ll} \hat{s}_t & \mbox{if \ }
t \not \in \{1, 2, j\} \\
\hat{s}_{1} + \lambda\hat{s}_{j} & \mbox{if \ } t = 1\\
\hat{s}_{2} + (1-\lambda)\hat{s}_{j} & \mbox{if \ } t = 2
\end{array}\right.
\]

One can check that
\[
\sum_{i \in K} \bar{s}_i = \hat{s}_j + \sum_{i \in K} \hat{s}_{i} \
.
\]

By Theorem~\ref{THM:minfunc} $\psi_{B_p}$ is convex, thus
$\psi_{B_p}(r^j) \leq \lambda\psi_{B_p}(r^1) +
(1-\lambda)\psi_{B_p}(r^2)$ for $p = 1, \ldots, m$. It follows that
$\sum_{i\in K}\psi_{B_p}(r^i)\bar{s}_{i} \geq
\psi_{B_p}(r^j)\hat{s}_j + \sum_{i \in K}\psi_{B_p}(r^i)\hat{s}_{i}
= \sum_{i=1}^k \psi_{B_p}(r^i)\hat{s}_i \geq 1$ for $p = 1, \ldots,
m$. Hence $\bar{s}$ satisfies all the inequalities restricted to
indices in $K$ and has the same objective value as $\hat{s}$. It
follows that $z_1 \geq z_2$.
\end{proof}

\section{Proof sketch for Theorems~\ref{THM:T1split}
and~\ref{THM:T&Q}}\label{SEC:proof_sketch}

In this section, we give a brief outline of the proofs of
Theorems~\ref{THM:T1split} and~\ref{THM:T&Q}.
A complete proof will be given in Sections \ref{SEC:T1split} and \ref{Section7} respectively.

In Theorem~\ref{THM:T1split}, we need to analyze the optimization problem

\begin{equation}\label{eq:Type1_over_Sf}
\min\left\{ \sum_{i=1}^k \psi(r^i) s_i \ : \; s \in S_f^k\right\} \
,
\end{equation}
\noindent
where $\psi$ is the minimal function derived from the Type 1 triangle.

For Theorem~\ref{THM:T&Q}, recall that all nontrivial facet defining
inequalities for $R_f^k$ are of the form $a^i s \geq 1$ with $a^i
\geq 0$. Therefore, Theorem \ref{THM:goemans} shows that to prove
Theorem~\ref{THM:T&Q}, we need to consider all nontrivial facet
defining inequalities and optimize in the direction $a^i$ over the
triangle closure $T_f^k$ and the quadrilateral closure $Q_f^k$. This
task is made easier since all the nontrivial facets of $R_f^k$ are
characterized in Theorem~\ref{THM:mainRfk}. Moreover,
Theorem~\ref{THM:inclusion} shows that every split inequality
$\sum_{i=1}^k \psi(r^i)s_i \geq 1$ is valid for $T_f^k$. Therefore,
if we optimize over $T_f^k$ in the direction $\sum_{i=1}^k
\psi(r^i)s_i$, we get a value of at least one.
Theorem~\ref{THM:inclusion} shows that this also holds for $Q_f^k$.
Thus, we can ignore the facets defined by split inequalities.

Formally, consider a maximal lattice-free triangle or quadrilateral
$B$ with associated minimal function $\psi$ that gives rise to a
facet $\sum_{j=1}^{k} \psi(r^j) s_j \ge 1$ of $R_f^k$. We want to
investigate the following optimization problems:

\begin{equation}\label{eq:facet_over_Tf}
\displaystyle \inf \left\{ \sum_{j=1}^{k} \psi(r^j) s_j : \; s \in
T_f^k \right\}
\end{equation}

and

\begin{equation}\label{eq:facet_over_Qf}
\displaystyle \inf \left\{ \sum_{j=1}^{k} \psi(r^j) s_j : \; s \in
Q_f^k \right\} .
\end{equation}

We first observe that, without loss of generality, we can make the
following simplifying assumptions for problems
(\ref{eq:Type1_over_Sf}), (\ref{eq:facet_over_Tf}) and
(\ref{eq:facet_over_Qf}).

\begin{assumption}\label{ass:scaling}
Consider the objective function $\psi$ in problems
(\ref{eq:Type1_over_Sf}), (\ref{eq:facet_over_Tf}) and
(\ref{eq:facet_over_Qf}). For every $j$ such that $\psi(r^j)
> 0$, the ray $r^j$ is such that the point $f+r^j$ is on the
boundary of the lattice-free set $B$ generating $\psi$.
\end{assumption}

Indeed, this amounts to scaling the coefficient for the ray $r^j$ by
a constant factor in every inequality derived from all maximal
lattice-free sets, including $B$. Therefore, this corresponds to a
simultaneous scaling of variable $s_j$ and corresponding
coefficients in problems (\ref{eq:Type1_over_Sf}),
(\ref{eq:facet_over_Tf}) and (\ref{eq:facet_over_Qf}). This does not
change the optimal values of these problems. Moreover, Cornu\'ejols
and Margot \cite{cm} show that the equations of all edges of
triangles of Type 1, 2, or 3, of quadrilaterals and the direction of
all splits generating facets of $R_f^k$ are rational. This implies
that the scaling factor for ray $r^j$ is a rational number and that
the scaled ray is rational too.

As a consequence, we can assume that the objective function of
problems (\ref{eq:Type1_over_Sf}), (\ref{eq:facet_over_Tf}) or
(\ref{eq:facet_over_Qf}) is  $\sum_{j=1}^k s_j$.

When $B_{\psi}$ is a triangle or quadrilateral and $f$ is in its
interior, define a {\it corner ray} to be a ray $r$ such that $f+
\lambda r$ is a corner of $B_{\psi}$ for some $\lambda > 0$.

\begin{remark}\label{rem:simplify}
If $\{r^1, \ldots, r^k\}$ contains the corner rays of the convex set
defining the objective functions of
(\ref{eq:Type1_over_Sf}), (\ref{eq:facet_over_Tf}) or
(\ref{eq:facet_over_Qf}), then Assumption~\ref{ass:scaling} implies
that the hypotheses of Theorem~\ref{THM:corner} are satisfied.
Therefore, when analyzing
(\ref{eq:Type1_over_Sf}), (\ref{eq:facet_over_Tf}) or
(\ref{eq:facet_over_Qf}), we can assume that $\{r^1, \ldots, r^k\}$ is exactly
the set of corner rays.
\end{remark}

\section{Type 1 triangle and the split closure}
\label{SEC:T1split}

In this section, we present the proof of Theorem \ref{THM:T1split}.

Consider any Type $1$ triangle $T$ with integral vertices $x^j$, for
$j=1, 2, 3$, and one integral point $y^j$ for $j = 1, 2, 3$ in the
interior of each edge. We want to study the optimization problem
(\ref{eq:Type1_over_Sf}). Recall that Remark~\ref{rem:simplify} says
that we only need to consider the case with three corner rays $r^1$,
$r^2$ and $r^3$.

We compute the exact value of

\begin{equation} \label{Split3}
\begin{array}{rlcl}
  z_{SPLIT} =    \;\; \min & \displaystyle \sum_{j=1}^3 s_j \\[0.1in]
& \displaystyle \sum_{j=1}^{3} \psi(r^j) s_j & \geq 1 &
\mbox{ for all splits } B_\psi \\[0.1in]
&  s  \in  \mathbb{R}_+^3 .
\end{array}
\end{equation}

Observe that, using a unimodular transformation, $T$ can be made to
have one horizontal edge $x^1x^2$ and one vertical edge $x^1x^3$, as
shown in Figure \ref{FIG:T1split}. Without loss of generality, we
place the origin at point $x^1$.

We distinguish two cases depending on the position of $f$ in the
interior of triangle $T$: $f$ is in the inner triangle $T_I$ with
vertices $y^1 = (1,1)$, $y^2 = (0,1)$ and $y^3 = (1,0)$; and $f \in
int(T) \setminus T_I$. We show that $z_{SPLIT} = \frac{1}{2}$ when
$f$ is in the inner triangle $T_I$ and that $z_{SPLIT}$ increases
linearly from $\frac{1}{2}$ when $f$ is at the boundary of $T_I$ to
$\frac{2}{3}$ at the corners of the triangle $T$ when $f \in int(T)
\setminus T_I$. See the right part of Figure \ref{FIG:T1split} for some
level curves of $z_{SPLIT}$ as a function of the position of $f$ in
$T$. By a symmetry argument, it is sufficient to consider the inner
triangle $T_I$  and the corner triangle $T_C$ defined by $f_1+f_2
\leq 1$, $f_1,f_2 \geq 0$.

\begin{theorem} \label{THM:TrIntVer}
Let $T$ be a Type 1 triangle with integral vertices, say $(0,0)$,
$(0,2)$ and $(2,0)$. Then
\begin{itemize}
\item[(i)]
$z_{SPLIT} = \frac{1}{2}$ when $f$ is interior to the triangle with
vertices $(1,0)$, $(0,1)$ and $(1,1)$.
\item[(ii)]
$z_{SPLIT} = 1-\frac{1}{3-f_1-f_2}$ when $f=(f_1,f_2)$ is interior
to the corner triangle $f_1+f_2 \leq 1$, $f_1,f_2 \geq 0$. The value
of $z_{SPLIT}$ when $f$ is in the other corner triangles follows by
symmetry.
\end{itemize}
\end{theorem}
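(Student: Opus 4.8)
The plan is to pin down $z_{SPLIT}$ by squeezing it between a lower bound from weak LP duality applied to a handful of explicit splits and a matching upper bound from an explicit feasible point, handling the inner triangle and the corner triangle $T_C$ separately (the other corner triangles then follow from the symmetry of $T$ under permutations of its vertices). By Remark~\ref{rem:simplify} and Assumption~\ref{ass:scaling} I take $r^j=x^j-f$ for the corners $x^1=(0,0)$, $x^2=(0,2)$, $x^3=(2,0)$ and the objective to be $s_1+s_2+s_3$. First I would record a formula for the split coefficients: for a split with coprime normal $\pi=(p,q)$ and $c\le \pi\cdot x\le c+1$ containing $f$, set $t=\pi\cdot f-c\in(0,1)$ and the integer $n_j=\pi\cdot x^j-c$; since $r^j=x^j-f$ this gives $\psi_S(r^j)=\max\!\big(\tfrac{n_j-t}{1-t},\,\tfrac{t-n_j}{t}\big)$. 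The candidate active splits are the three lattice splits whose boundary lines carry the edge midpoints of $T$: $A:0\le x_1\le 1$, $B:0\le x_2\le 1$, and $C:1\le x_1+x_2\le 2$. Writing $a=\tfrac{2-f_1}{1-f_1}$, $b=\tfrac{2-f_2}{1-f_2}$, $\gamma=\tfrac{f_1+f_2}{f_1+f_2-1}$, they read $s_1+s_2+a s_3\ge 1$, $s_1+b s_2+s_3\ge1$ and $\gamma s_1+s_2+s_3\ge1$; note $A$ and $B$ are valid throughout $T$, whereas $C$ requires $f_1+f_2>1$.

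For the lower bounds I would use that nonnegative multipliers $y_S$ with $\sum_S y_S\psi_S(r^j)\le 1$ for $j=1,2,3$ force $z_{SPLIT}\ge\sum_S y_S$. In the inner triangle, combining $A,B,C$ with the multipliers that solve the transposed tight system is dual-feasible exactly when $\tfrac{1}{a-1}+\tfrac{1}{b-1}=\tfrac{\gamma-2}{\gamma-1}$, and substituting $a-1=\tfrac{1}{1-f_1}$, $b-1=\tfrac{1}{1-f_2}$, $\gamma-1=\tfrac{1}{f_1+f_2-1}$ collapses both sides to $2-f_1-f_2$; the resulting objective is $\tfrac12$. In the corner triangle $C$ is unavailable, but $y_A=\tfrac{b-1}{ab-1}$, $y_B=\tfrac{a-1}{ab-1}$ are dual-feasible (the binding ray constraints are those for $r^2,r^3$, and the $r^1$ constraint reads $y_A+y_B=\tfrac{2-g}{3-g}\le1$ with $g=f_1+f_2$), yielding $z_{SPLIT}\ge\tfrac{a+b-2}{ab-1}=\tfrac{2-g}{3-g}=1-\tfrac{1}{3-f_1-f_2}$ after the same substitution.

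For the matching upper bounds I would exhibit feasible points: in the inner triangle, the solution $s^{\ast}$ of the $3\times3$ system ``$A,B,C$ tight,'' which is strictly positive and whose coordinate sum is $\tfrac12$ by the identical algebra; in the corner triangle, the point $s^{\ast}=\big(0,\tfrac{1-f_2}{3-g},\tfrac{1-f_1}{3-g}\big)$, which makes $A$ and $B$ tight and has sum $\tfrac{2-g}{3-g}$. The crux, and the step I expect to be the main obstacle, is proving that each such $s^{\ast}$ actually lies in $S_f^k$, i.e. satisfies $\sum_j\psi_S(r^j)s^{\ast}_j\ge1$ for \emph{every} split, not merely for $A,B,C$. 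One cannot shortcut this through membership in the integer hull, since $\sum_j s^{\ast}_j<1$ violates the generating triangle inequality and so $s^{\ast}\notin R_f^k$. The feature that makes the verification tractable is that, because the corners $x^j$ are integral and a split strip is lattice-free, $x^j\notin\mathrm{int}(S)$ for every split $S$; equivalently, since $n_j$ is an integer one always has $\psi_S(r^j)\ge1$ (with value $\tfrac{n_j-1}{1-t}+1$ if $n_j\ge1$ and $1+\tfrac{-n_j}{t}$ if $n_j\le0$). Thus the inequality reduces to a lower bound on the ``excess'' $\sum_j(\psi_S(r^j)-1)s^{\ast}_j\ge 1-\sum_j s^{\ast}_j$.

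I would discharge this excess inequality by a finite case analysis on the integer pattern $(n_1,n_2,n_3)=(-c,\,2q-c,\,2p-c)$, i.e.\ on which corners of $T$ lie on which side of the split, with $t\in(0,1)$ as the remaining free parameter. The excess contributed by corner $x^j$ vanishes precisely when $n_j\in\{0,1\}$, and the parity constraint $n_2-n_3=2(q-p)$ prevents the excesses at $x^2$ and $x^3$ from vanishing simultaneously for any split valid at an $f$ with $g<1$; this is exactly why the bound $\tfrac{2-g}{3-g}$ is attained only by $A$ and $B$. The analogous bookkeeping for the three corners handles the inner-triangle case, where $A,B,C$ are the extremal patterns. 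Establishing that these finitely many patterns dominate all others, so that the two or three explicit splits already cut out $S_f^k$ along the three corner rays, is where the real work lies; the remaining corner triangles and the full piecewise-linear level-curve picture of Figure~\ref{FIG:T1split} then follow by symmetry.
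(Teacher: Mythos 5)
Your skeleton matches the paper's: the same two (resp.\ three) splits $0\le x_1\le 1$, $0\le x_2\le 1$ (and $1\le x_1+x_2\le 2$), the same explicit optimal points, and the same values $\tfrac12$ and $\tfrac{2-g}{3-g}$. The lower-bound half of your argument is sound and needs no domination result: dropping all but two or three split inequalities relaxes the feasible region, so the dual multipliers you exhibit for the restricted LP give a valid lower bound on $z_{SPLIT}$, and your algebra ($a-1=\tfrac{1}{1-f_1}$, $b-1=\tfrac{1}{1-f_2}$, $ab-1=\tfrac{3-g}{(1-f_1)(1-f_2)}$, etc.) checks out.

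The gap is in the upper bound, and you have named it yourself without closing it: you must show that your candidate $s^{\ast}$ satisfies $\sum_j\psi_S(r^j)s^{\ast}_j\ge 1$ for \emph{every} split $S$, equivalently that the two or three named splits dominate all others along the corner rays. Your proposed route --- reduce to the excess inequality and case-split on $(n_1,n_2,n_3)=(-c,\,2q-c,\,2p-c)$ --- ranges over all coprime $(p,q)$ and all integers $c$, with the additional free parameter $t=\pi\cdot f-c\in(0,1)$ coupled to $f$; the parity observation only rules out the degenerate pattern where two excesses vanish simultaneously, and does not by itself yield the quantitative bound $\sum_j(\psi_S(r^j)-1)s^{\ast}_j\ge 1-\sum_j s^{\ast}_j$ uniformly over this infinite family. ``Establishing that these finitely many patterns dominate all others'' is precisely the content of the theorem's hard step, and the proposal defers it. The paper closes it with a short geometric argument (Lemmas \ref{Splits123} and \ref{Splits23}): if a split $S$ is not dominated by one of the named splits, then the boundary points $z^i$ of the corner rays on the named splits must lie in the interior of $S$; since the integral edge midpoints $y^1,y^2,y^3$ cannot lie in the interior of $S$, Remark \ref{LEM:OppSides} applied to the collinear triples $y^a,z^i,y^b$ forces two of the $y^j$ onto opposite shores of $S$ in two incompatible ways, a contradiction. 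If you want to keep your algebraic framework, you would need to supply an argument of comparable substance (for instance, bounding $t$ via the lattice-free strip containing $f$ and the positions of the $y^j$); as written, the proof is incomplete at its central step.
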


To prove this theorem, we show that the split closure is completely
defined by only three split inequalities. In other words, all other
split inequalities are dominated by these three split inequalities.

Define $S_1$ as the convex set $1 \le x_1+x_2 \leq 2$,
$S_2$ as the convex set $0 \le x_1 \leq 1$ and $S_3$
as the convex set $0 \le x_2 \leq 1$. Define {\it Split 1} (resp.
{\it Split 2}, {\it Split3}) to  be the inequality obtained from $S_1$
(resp. $S_2$, $S_3$).

Let $S$ be a split $c \leq a x_1 + b a_2 \leq c+1$ with $f$ in the interior of $S$.  The {\it shores} of $S$
are the two half-planes $a x_1 + b a_2 \leq c$ and $a x_1 + b a_2 \geq c+1$.

\begin{remark} \label{LEM:OppSides}
Let $A$, $B$, and $C$ be three points on a line, with $B$ between
$A$ and $C$ and let $S$ be a split.
If $A$ and $C$ are not in the interior of $S$ but $B$ is, then $A$ and $C$ are
on opposite shores of $S$.
\end{remark}


\begin{figure}[htbp]
\centering \scalebox{0.5}{\epsfig{file=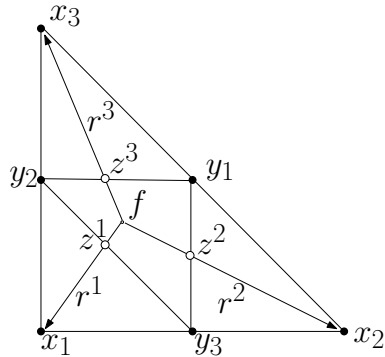}}
\caption{Illustration for the proof of Lemma \ref{Splits123}}
\label{FIG:Splits123}
\end{figure}

\begin{lemma} \label{Splits123}
If $f$ is in the interior of the triangle $T_I$ with vertices
$(0,1)$, $(1,0)$ and $(1,1)$, then the split closure is defined by
Split 1, Split 2 and Split 3.
\end{lemma}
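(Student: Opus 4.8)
The plan is to prove the stronger, purely coordinatewise statement that every split inequality is dominated by a \emph{single} one of Split 1, Split 2, Split 3. Since $S_1,S_2,S_3$ are themselves splits, the split closure is trivially contained in $\{s\ge 0:\ \psi_{S_j}\cdot s\ge 1,\ j=1,2,3\}$, so only the reverse inclusion needs work. By Remark~\ref{rem:simplify} I take the three corner rays $r^1,r^2,r^3$ pointing to $x^1=(0,0)$, $x^2=(2,0)$, $x^3=(0,2)$, scaled via Assumption~\ref{ass:scaling} so that $f+r^j=x^j$. Because every coefficient is nonnegative and $s\ge 0$, the inequality $\psi_S\cdot s\ge 1$ is implied by $\psi_{S_j}\cdot s\ge 1$ as soon as $\psi_S(r^i)\ge\psi_{S_j}(r^i)$ for all $i$. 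So it suffices to show that for each split $S$ there is an index $j$ with $\psi_S\ge\psi_{S_j}$ coordinatewise.

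Two elementary facts make this comparison collapse to a single coordinate. (i) For any split $S:\,c\le h\le c+1$ with $h=ax_1+bx_2$ and $f\in\mathrm{int}(S)$, each $h(x^j)$ is an integer while $h(f)\in(c,c+1)$ is not; hence every corner lies on a \emph{closed} shore of $S$, which forces $\psi_S(r^j)\ge 1$ for all $j$. (ii) For the canonical split $S_j$, the two corners other than $x^j$ lie on a boundary line of $S_j$ (giving $\psi_{S_j}(r^i)=1$ for $i\ne j$), while $x^j$ lies strictly outside the slab (giving $\psi_{S_j}(r^j)>1$); here the near boundary line of $S_j$, the one met first along the ray toward $x^j$, is exactly the edge of the inner triangle $T_I$, namely $\varepsilon_1:x_1+x_2=1$, $\varepsilon_2:x_1=1$, $\varepsilon_3:x_2=1$. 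Combining (i) and (ii), $\psi_S\ge\psi_{S_j}$ coordinatewise \emph{if and only if} $\psi_S(r^j)\ge\psi_{S_j}(r^j)$. I then give this a geometric form: let $e^j$ be the point where the ray $f+\lambda r^j$ ($\lambda>0$) leaves $T_I$. Because $f\in\mathrm{int}(T_I)$, this ray exits through $\varepsilon_j$, so $e^j$ lies in the relative interior of $\varepsilon_j$ and is exactly the point where the ray meets $\partial S_j$. Comparing the parameters at which the ray meets $\partial S$ and $\partial S_j$ yields $\psi_S(r^j)\ge\psi_{S_j}(r^j)\iff e^j\notin\mathrm{int}(S)$. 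Thus the whole lemma reduces to one claim: \emph{no split contains all three points $e^1,e^2,e^3$ in its interior.}

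This claim I would prove by contradiction with Remark~\ref{LEM:OppSides}. Suppose a split $S$ contains $e^1,e^2,e^3$ in its interior. The endpoints of the three edges of $T_I$ are the lattice points $(1,0),(0,1),(1,1)$, none of which lies in $\mathrm{int}(S)$ since a split is lattice-free. On $\varepsilon_1$ the interior point $e^1$ lies strictly between $(1,0)$ and $(0,1)$, so Remark~\ref{LEM:OppSides} places $(1,0)$ and $(0,1)$ on opposite shores; applied to $\varepsilon_2$ and $\varepsilon_3$ it places $(1,0),(1,1)$ on opposite shores and $(0,1),(1,1)$ on opposite shores. Writing $h(1,0)=a$, $h(0,1)=b$, $h(1,1)=a+b$, this asserts that the three integers $a,b,a+b$ are pairwise on opposite sides of the gap $(c,c+1)$ — impossible, since the two-valued assignment ``$\le c$ versus $\ge c+1$'' cannot separate three numbers pairwise. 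Hence some $e^j\notin\mathrm{int}(S)$, the matching $S_j$ dominates $S$, and Split 1, Split 2, Split 3 imply every split inequality.

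I expect the conceptual crux to be the reduction of the preceding paragraph: recognizing that domination by the entire split closure collapses to domination by a \emph{single} canonical split, and that this is equivalent to a containment statement about the three exit points $e^j$ on $\partial T_I$. Once that translation is set up, the final contradiction is a one-line pigeonhole on the two shores. The two steps requiring genuine care are verifying the coordinatewise comparison (facts (i) and (ii)) and checking that $f\in\mathrm{int}(T_I)$ is precisely what guarantees that each ray to a corner leaves $T_I$ through the relative interior of the corresponding edge; this is exactly where the hypothesis $f\in\mathrm{int}(T_I)$, rather than merely $f\in\mathrm{int}(T)$, is used.
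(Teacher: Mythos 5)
Your proof is correct and follows essentially the same route as the paper's: both reduce to the three corner rays, observe that a split $S$ not dominated by Split $j$ must contain the exit point of $r^j$ from $T_I$ in its interior, and then derive a contradiction from Remark~\ref{LEM:OppSides} applied to the three edges of $T_I$, since three integral points cannot be pairwise on opposite shores of a split. The only difference is presentational: you spell out the coordinatewise domination criterion (facts (i) and (ii)) and the final pigeonhole on $a$, $b$, $a+b$ more explicitly than the paper does.
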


\begin{proof}
Let $S$ be a split defining an inequality that is not dominated by either of
Split 1, Split 2, or Split 3.
For $i = 1, 2, 3$, let $z^i$ (resp. $w^i$)
be the boundary point for $r^i$ on the boundary of $T_I$ (resp. $S$)
(see Figure \ref{FIG:Splits123}). (Note that since $x^i = f + r^i$
is integer, these points exist.) Observe that if
$z^1$ is not in the interior of $S$, then the inequality obtained from $S$ is
dominated by Split 1, since the three boundary points $w_1, w_2, w_3$
are closer to $f$ than the corresponding three boundary points $z^1,
x^2, x^3$ for the three rays on the boundary of $S_1$.
A similar observation holds for $z^2$ and Split 2
and for $z^3$ and Split 3, yielding that $z^2$ and $z^3$ are also in
the interior of $S$.

Since the points $y^1, y^2$ and $y^3$ are integer, they are not
in the interior of $S$.
Applying Remark \ref{LEM:OppSides} to $y^1, z^3, y^2$, we must have that $y^1$ and $y^2$ are on opposite shores of
$S$. Now, $y^3$ is in one of the two shores of $S$. Assume without
loss of generality that it is on the same shore as $y^1$.
Applying Remark \ref{LEM:OppSides} to $y^1, z^2, y^3$, we have that $y^1$ and $y^3$ are on opposite shores of
$S$, a contradiction.
\end{proof}

\begin{lemma}
If $f$ is in the interior of the triangle $T_I$ with vertices
$(0,1)$, $(1,0)$ and $(1,1)$, then $z_{SPLIT} = \frac{1}{2}$.
\end{lemma}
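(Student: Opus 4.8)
The plan is to use Lemma~\ref{Splits123}, which tells us that when $f$ is interior to $T_I$ the split closure is cut out by only Split~1, Split~2 and Split~3. Combined with Remark~\ref{rem:simplify}, which lets us restrict attention to the three corner rays $r^1, r^2, r^3$ pointing to the vertices $(0,0)$, $(2,0)$, $(0,2)$ of $T$, the optimization~(\ref{Split3}) collapses to a $3$-variable, $3$-constraint linear program whose value I can compute in closed form.

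First I would write down the coefficient matrix explicitly. Using the gauge (intersection-cut) description of $\psi_{S_i}$ from Theorem~\ref{THM:minfunc} --- namely $\psi_{S_i}(r^j) = 1/\lambda$ where $f + \lambda r^j$ meets the boundary of $S_i$ --- together with the normalization of Assumption~\ref{ass:scaling} (so that $f + r^j$ is the vertex $x^j$), the key observation is that for each split $S_i$ one of the triangle's edges lies on a boundary line of $S_i$; hence the two rays pointing to the endpoints of that edge receive coefficient exactly $1$, while the third, ``opposite'' ray crosses the near boundary line of $S_i$ first and receives a coefficient $a_i > 1$. A short computation gives $a_1 = \frac{f_1+f_2}{f_1+f_2-1}$, $a_2 = \frac{2-f_1}{1-f_1}$ and $a_3 = \frac{2-f_2}{1-f_2}$, so the constraint matrix has diagonal $(a_1,a_2,a_3)$ and all off-diagonal entries equal to $1$. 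Since $f$ is interior to $T_I$ we have $f_1 + f_2 > 1$, $f_1 < 1$ and $f_2 < 1$, whence each $a_i > 1$.

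The decisive identity is
\[
\frac{1}{a_1-1}+\frac{1}{a_2-1}+\frac{1}{a_3-1} = (f_1+f_2-1)+(1-f_1)+(1-f_2) = 1,
\]
which holds for every such $f$. Using it, I would produce matching primal and dual certificates of value $\frac12$. On the primal side, set all three constraints tight; writing $\sigma = s_1+s_2+s_3$, each equation gives $s_i = (1-\sigma)/(a_i-1)$, so $\sigma = (1-\sigma)\sum_i 1/(a_i-1) = 1-\sigma$ and therefore $\sigma = \frac12$; the identity also guarantees $s_i > 0$, so this point is feasible and yields $z_{SPLIT} \le \frac12$. On the dual side, the multipliers $y_i = \frac{1}{2(a_i-1)}$ are nonnegative, satisfy every dual constraint with equality, and have objective $\sum_i y_i = \frac12\sum_i 1/(a_i-1) = \frac12$, giving $z_{SPLIT} \ge \frac12$ by weak duality. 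Hence $z_{SPLIT} = \frac12$.

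The only real obstacle is the bookkeeping: correctly reading off the six coefficients $\psi_{S_i}(r^j)$ from the geometry and verifying that the off-diagonal entries are all exactly $1$ while the diagonal entries take the stated values. Once the matrix is in hand, the telescoping identity does all the work, and the LP value is \emph{constant} equal to $\frac12$ throughout the interior of $T_I$, which is exactly the assertion of the lemma.
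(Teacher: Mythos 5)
Your proposal is correct, and it follows the same overall route as the paper: use Lemma~\ref{Splits123} together with Remark~\ref{rem:simplify} to collapse the problem to the $3\times 3$ linear program (\ref{Split123}), whose coefficient matrix you read off correctly (diagonal entries $a_1 = \frac{f_1+f_2}{f_1+f_2-1}$, $a_2 = \frac{2-f_1}{1-f_1}$, $a_3 = \frac{2-f_2}{1-f_2}$, all off-diagonal entries equal to $1$), and then exhibit the optimal point. Your tight primal solution $s_i = \frac{1}{2(a_i-1)}$ is exactly the paper's $s^* = \bigl(\tfrac{f_1+f_2-1}{2}, \tfrac{1-f_1}{2}, \tfrac{1-f_2}{2}\bigr)$. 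Where you differ --- to your advantage --- is in certifying optimality. The paper notes that all three constraints are tight at $s^*$ and that the dual is feasible (citing $(0,0,0)$), and appeals to complementary slackness; as written this is incomplete, since all $s^*_j > 0$, so complementary slackness additionally requires a dual feasible point at which every dual constraint is tight, which $(0,0,0)$ is not, and primal tightness alone does not establish optimality. Your explicit multipliers $y_i = \frac{1}{2(a_i-1)}$, which are nonnegative and --- by the symmetry of the matrix and the identity $\frac{1}{a_1-1}+\frac{1}{a_2-1}+\frac{1}{a_3-1} = (f_1+f_2-1)+(1-f_1)+(1-f_2) = 1$ --- satisfy every dual constraint with equality and have objective value $\frac12$, supply precisely the missing certificate: weak duality then gives $z_{SPLIT} \ge \frac12$, hence equality. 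The telescoping identity is also a clean way to see why the optimal value is constant throughout the interior of $T_I$ rather than varying with $f$.
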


\begin{proof} By Lemma \ref{Splits123}, $z_{SPLIT}$ is given by

\begin{equation}
\begin{array}{rlcl}
  z_{SPLIT} =   \;\; \min & \displaystyle \sum_{j=1}^3 s_j \\[0.1in]
&\displaystyle \sum_{j=1}^{3} \psi_i(r^j) s_j & \geq 1 &
\mbox{ for } i=1,2,3 \\[0.1in]
&  s  \in  \mathbb{R}_+^3
\end{array}
\end{equation}

\noindent where $\psi_i(r^j)$ is the coefficient of $s_j$ in Split
$i$, for $i = 1, 2, 3$. Let $f=(f_1, f_2)$. The coefficient of $s^j$
in the split inequality can be computed from the boundary point for
$r^j$ with the corresponding split. For example, the boundary points
for $r^2$ and $r^3$ with $S_1$ are the integer points $x^2$ and
$x^3$. This implies that $\psi_1(r^2)=\psi_1(r^3)=1$. On the other
hand, the boundary point for $r^1$ is the point $t=
\left(\frac{f_1}{f_1+f_2}, \frac{f_2}{f_1+f_2} \right)$. The length
of $r^1$ divided by the length of the segment $ft$ determines the
coefficient $\psi_1(r^1)$ of $s_1$ (This follows from the
homogeneity of $\psi_1$ and the fact that $\psi_1(t - f)=1$ since
$t$ is on the boundary of $S_1$). We get
$\psi_1(r^1)=\frac{f_1+f_2}{f_1+f_2-1}$. Repeating this for $S_2$
and $S_3$, we get that $z_{SPLIT}$ is the optimal value of the
following linear program.

\begin{equation} \label{Split123}
\begin{array}{rrrrl}
  z_{SPLIT} =    \;\; \min & s_1 & + s_2 & + s_3 \\
  &  \frac{f_1+f_2}{f_1+f_2-1} s_1  & + s_2 & + s_3 &  \geq 1  \\
   &  s_1  & + \frac{2-f_1}{1-f_1} s_2 & + s_3 & \geq 1  \\
    &  s_1  & + s_2 & + \frac{2-f_2}{1-f_2} s_3 & \geq 1  \\
  &  &  s  \geq 0.
\end{array}
\end{equation}

Its optimal solution $s^*$ is $s^*_1 = \frac{f_1+f_2-1}{2}$, $s^*_2 =
\frac{1-f_1}{2}$, $s^*_3 = \frac{1-f_2}{2}$ with value $z_{SPLIT} =
s^*_1 + s^*_2 + s^*_3 = \frac{1}{2}$. Indeed,
note that all three inequalities in (\ref{Split123}) are satisfied
at equality and that the dual of (\ref{Split123}) is feasible (for example,
$(0,0,0)$ is a solution).
Therefore the complementary slackness conditions hold for $s^*$ with any
feasible solution of the dual.


\end{proof}

Now we prove the second part of the theorem, when $f$ is interior to
the corner triangle with vertices $(0,0)$, $(1,0)$ and $(0,1)$ or an
inner point on the segment $y^2y^3$.

\begin{lemma} \label{Splits23}
If $f$ is in the interior of the triangle with vertices $(0,0)$,
$(0,1)$ and $(1,0)$, or an inner point on the segment joining
$(0,1)$ to $(1,0)$, then the split closure is defined by Split 2 and
Split 3.
\end{lemma}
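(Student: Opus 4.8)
The plan is to mimic the proof of Lemma~\ref{Splits123}: I take an arbitrary split $S$ with $f$ in its interior whose inequality is dominated by neither Split~2 nor Split~3, and derive a contradiction; since Split~2 and Split~3 are themselves valid, this shows every split inequality is implied by these two. Throughout, $r^1,r^2,r^3$ are the three corner rays pointing to the integral vertices $x^1=(0,0)$, $x^2=(0,2)$, $x^3=(2,0)$, and I record a split inequality through its coefficients $\psi_S(r^i)$. The first easy observation is that, because $x^1,x^2,x^3$ are integral and hence never lie in the interior of $S$, every split $S$ with $f$ in its interior satisfies $\psi_S(r^i)\ge 1$ for $i=1,2,3$. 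The coefficients of Split~2 are $(\psi_2(r^1),\psi_2(r^2),\psi_2(r^3))=(1,1,a_2)$ with $a_2>1$ (its boundary point for $r^3$ lies on the line $x_1=1$, strictly closer to $f$ than $x^3$), and those of Split~3 are $(1,a_3,1)$ with $a_3>1$. Consequently $S$ is dominated by Split~2 iff $\psi_S(r^3)\ge a_2$ and by Split~3 iff $\psi_S(r^2)\ge a_3$, so a split dominated by neither must satisfy $\psi_S(r^3)<a_2$ and $\psi_S(r^2)<a_3$.

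Next I translate these two strict inequalities into geometry. Let $u^2$ be the point where the ray $f+\lambda r^2$ meets the line $x_2=1$, and $u^3$ the point where $f+\lambda r^3$ meets $x_1=1$; these are exactly the boundary points of $S_3$ and $S_2$ realizing $a_3$ and $a_2$. The conditions $\psi_S(r^2)<a_3$ and $\psi_S(r^3)<a_2$ say precisely that $S$ reaches its boundary strictly beyond $u^2$ and $u^3$ along these rays, i.e.\ $u^2,u^3\in int(S)$. A short computation using $f_1,f_2>0$ and $f_1+f_2\le 1$ locates $u^2=\bigl(\tfrac{f_1}{2-f_2},1\bigr)$ strictly inside the segment from $(0,1)$ to $(1,1)$ and $u^3=\bigl(1,\tfrac{f_2}{2-f_1}\bigr)$ strictly inside the segment from $(1,0)$ to $(1,1)$. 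Since $(0,1),(1,1),(1,0)$ are integral and hence not in $int(S)$, Remark~\ref{LEM:OppSides} applied to the collinear triples $(0,1),u^2,(1,1)$ and $(1,0),u^3,(1,1)$ shows that $(0,1)$ and $(1,1)$ lie on opposite shores of $S$ and that $(1,0)$ and $(1,1)$ lie on opposite shores. Hence $(0,1)$ and $(1,0)$ lie on one shore of $S$ and $(1,1)$ on the other.

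The contradiction then comes from arithmetic on the split. Write $S$ as $c\le ax_1+bx_2\le c+1$ with $a,b$ coprime integers; after possibly replacing $(a,b,c)$ by $(-a,-b,-c-1)$, which describes the same split and merely swaps the shores, I may assume $(0,1)$ and $(1,0)$ satisfy $ax_1+bx_2\le c$ while $(1,1)$ satisfies $ax_1+bx_2\ge c+1$. Plugging in the three lattice points gives $a\le c$, $b\le c$, and $a+b\ge c+1$, whence $c+1\le a+b\le 2c$ forces $c\ge 1$. Then, using $f_1,f_2\ge 0$ and $f_1+f_2\le 1$, we get $af_1+bf_2\le c(f_1+f_2)\le c$, contradicting $f\in int(S)$, which requires $af_1+bf_2>c$. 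I expect the main obstacle to be the middle step: correctly pinning down where $u^2$ and $u^3$ sit relative to the three lattice points $(0,1),(1,1),(1,0)$ so that Remark~\ref{LEM:OppSides} applies, together with justifying that non-domination reduces to the single coefficients $\psi_S(r^3)$ and $\psi_S(r^2)$ by means of the automatic bounds $\psi_S(r^i)\ge 1$ coming from the integral corners. Once the shore configuration is established, the closing integer inequality is routine, and the lemma lets one compute $z_{SPLIT}$ in the corner region from just the Split~2 and Split~3 inequalities.
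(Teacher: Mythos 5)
Your proof is correct and follows essentially the same route as the paper's: both arguments show that a split dominated by neither Split~2 nor Split~3 must contain in its interior the points where the rays $r^2$ and $r^3$ meet the lines $x_2=1$ and $x_1=1$, and then invoke Remark~\ref{LEM:OppSides} to force $(0,1)$ and $(1,0)$ onto one shore of $S$ and $(1,1)$ onto the other. The only divergence is the final contradiction, where the paper argues geometrically that the segment from $f$ to the point on $x_2=1$ must cross the segment joining $(0,1)$ to $(1,0)$, which lies entirely in one closed shore, whereas you extract it arithmetically from $a\le c$, $b\le c$, $a+b\ge c+1$ together with $f_1,f_2\ge 0$ and $f_1+f_2\le 1$; both closings are valid.
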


\begin{proof}
Let $S$ be a split defining a split inequality that is not dominated
by either of Split 2, or Split 3. Let $z^2$ be the intersection point of $r^2$
with $y^1y^3$ and let $z^3$ be the intersection point of $r^3$ with
$y^1y^2$. For $i = 1, 2, 3$, let $w^i$ be the intersection point of $r^i$
with either $L_1$ or $L_2$. (Note that since $x^i$ is integer, $r^i$
has to intersect one of the two lines.) Observe that if $z^2$ is not
in the interior of $S$, then the inequality obtained from $S$ is
dominated by Split 2, since the three intersection points $w_1, w_2, w_3$
are closer to $f$ than the corresponding three intersection points $x^1,
z^2, x^3$ for $S_2$. A similar observation holds for $z^3$ and
$S_3$, yielding that $z^3$ is also in the interior of $S$.

Since the points $y^1, y^2$ and $y^3$ are integer, they are not
in the interior of $S$. Applying Remark \ref{LEM:OppSides} to $y^1, z^3, y^2$, we have that $y^1$ and $y^2$ are on opposite shores of $S$.
Applying Remark \ref{LEM:OppSides} to $y^1, z^2, y^3$, we have that $y^1$ and $y^3$ are on opposite shores of $S$. It
follows that $y^2$ and $y^3$ are on the same shore $W$ of $S$ and
thus the whole segment $y^2y^3$ is in $W$. This is a contradiction
with the fact that both $f$ and $z^3$ are in the interior of $S$, as
the two segments $y^2y^3$ and $fz^3$ intersect.
\end{proof}

\begin{lemma}
If $f = (f_1,f_2)$ is in the interior of the triangle with vertices
$(0,0)$, $(0,1)$ and $(1,0)$, or an inner point on the segment
joining $(0,1)$ to $(1,0)$, then $z_{SPLIT} = 1-
\frac{1}{3-f_1-f_2}$.
\end{lemma}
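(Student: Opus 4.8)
The plan is to follow exactly the template of the inner-triangle lemma: reduce the split closure to a handful of inequalities using the geometric result just established, then solve a small explicit linear program and certify its optimum by LP duality. First I would invoke Lemma~\ref{Splits23}, which says that for $f$ in the corner triangle (or on the segment $y^2y^3$) every split inequality is dominated by Split~2 or Split~3. Hence $z_{SPLIT}$ is the optimal value of the program obtained from (\ref{Split123}) by deleting the Split~1 row,
$$\min\left\{s_1 + s_2 + s_3 \ :\ s_1 + \frac{2-f_1}{1-f_1}s_2 + s_3 \ge 1,\ \ s_1 + s_2 + \frac{2-f_2}{1-f_2}s_3 \ge 1,\ \ s \ge 0\right\}.$$
The split coefficients here are the same ones computed in the proof of the inner-triangle case; Split~1 legitimately disappears because for $f_1+f_2\le 1$ the point $f$ is no longer in the interior of $S_1$, so $S_1$ generates no valid inequality. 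Writing $a = \frac{2-f_1}{1-f_1}$ and $b = \frac{2-f_2}{1-f_2}$, I note that $a,b>1$.

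Next I would guess and verify the optimal solution. Since $s_1$ appears with coefficient $1$ in both constraints while $s_2$ and $s_3$ each carry a coefficient exceeding $1$ in one constraint, it is efficient to set $s_1=0$ and drive both constraints to equality. Solving the $2\times2$ system $a s_2 + s_3 = 1$, $s_2 + b s_3 = 1$ yields $s_2 = \frac{b-1}{ab-1}$, $s_3 = \frac{a-1}{ab-1}$, both positive, with objective value $\frac{a+b-2}{ab-1}$. Using the elementary identities $a-1 = \frac{1}{1-f_1}$, $b-1 = \frac{1}{1-f_2}$ and $(2-f_1)(2-f_2) - (1-f_1)(1-f_2) = 3 - f_1 - f_2$, a short computation collapses $\frac{a+b-2}{ab-1}$ to $\frac{2-f_1-f_2}{3-f_1-f_2} = 1 - \frac{1}{3-f_1-f_2}$, which is the claimed value.

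Finally I would certify optimality. By complementary slackness the matching dual solution should make tight the two dual constraints attached to $s_2$ and $s_3$; by the symmetry of the program this gives $y_1 = \frac{b-1}{ab-1}$, $y_2 = \frac{a-1}{ab-1}$, whose dual objective $y_1+y_2$ equals the primal value $\frac{2-f_1-f_2}{3-f_1-f_2}$. I would then check that this dual point is feasible, namely that the remaining dual constraint $y_1 + y_2 \le 1$ holds (it does, since $\frac{2-f_1-f_2}{3-f_1-f_2}<1$) and $y\ge 0$; equal objective values for a feasible primal point and a feasible dual point then force optimality by LP duality. The whole argument is elementary once Lemma~\ref{Splits23} is in hand; the only steps deserving attention are the single algebraic identity above and the optimality certificate, whose role is precisely to rule out the a priori possibility that the optimum uses $s_1>0$ (which would only give the weaker value $1$).
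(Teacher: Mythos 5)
Your proof is correct and follows essentially the same route as the paper: invoke Lemma~\ref{Splits23} to reduce the split closure to Split~2 and Split~3, then solve the resulting two-constraint LP, whose optimal solution $s_1=0$, $s_2=\frac{1-f_1}{3-f_1-f_2}$, $s_3=\frac{1-f_2}{3-f_1-f_2}$ is exactly the one the paper states. The only difference is that you supply an explicit complementary-slackness/dual-feasibility certificate (ruling out solutions with $s_1>0$), which the paper omits here but which is a welcome addition of rigor.
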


\begin{proof}
The optimal solution of the LP

\begin{equation} \label{Split12}
\begin{array}{rrrrl}
  z_{SPLIT} =       & \min s_1 & + s_2 & + s_3 \\
   &  s_1  & + \frac{2-f_1}{1-f_1} s_2 & + s_3 & \geq 1  \\
    &  s_1  & + s_2 & + \frac{2-f_2}{1-f_2} s_3 & \geq 1  \\
  &  &  s  \geq 0.
\end{array}
\end{equation}

\noindent is $s_1=0$, $s_2 = \frac{1-f_1}{3-f_1-f_2}$, $s_3 =
\frac{1-f_2}{3-f_1-f_2}$.
\end{proof}

This completes the proof of Theorem \ref{THM:TrIntVer}.
This theorem in conjunction with Theorem~\ref{THM:goemans} implies
that including all Type 1 triangle facets can
improve upon the split closure only by a factor of 2.

\begin{corollary}
Let $\mathcal{F}$ be the family of all facet defining inequalities
arising from Type 1 triangles. Define
$$
\bar S_f = S_f^k\cap \left\{\sum_{i=1}^{k} \psi(r^i)s_i \geq 1 \ : \
\psi \in \mathcal{F} \right\}\ .
$$
Then $\bar S_f \subseteq S_f^k \subseteq 2 \bar S_f$.
\end{corollary}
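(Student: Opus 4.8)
The plan is to read the second inclusion off Theorem~\ref{THM:goemans}, combined with the single-triangle computation in Theorem~\ref{THM:T1split}. Take $P = S_f^k$ and $Q = \bar S_f$. The inclusion $\bar S_f \subseteq S_f^k$ is immediate, since $\bar S_f$ is $S_f^k$ intersected with extra valid inequalities; hence $S_f^k$ is a relaxation of $\bar S_f$. Moreover every nontrivial valid inequality $\sum_i \psi(r^i) s_i \geq 1$ fails at the origin, so both sets lie in $\mathbb{R}^k_+ \setminus \{0\}$, as required to invoke Theorem~\ref{THM:goemans}. The task then reduces to bounding by $2$ the quantity $\sup_i\{1/\inf\{ a^i s : s \in S_f^k\}\}$, where $a^i s \geq 1$ ranges over a description of $\bar S_f$ by nontrivial valid inequalities.

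For such a description I would use all split inequalities together with all Type 1 triangle inequalities, each normalized to right-hand side $1$. There are two cases. If $a^i s \geq 1$ is a split inequality, it is valid for the whole split closure $S_f^k$, so $\inf\{a^i s : s \in S_f^k\} \geq 1$ and the corresponding ratio is at most $1$. If $a^i s \geq 1$ is a Type 1 triangle inequality $\sum_i \psi(r^i) s_i \geq 1$, then $\inf\{a^i s : s \in S_f^k\}$ is precisely the value evaluated in Theorem~\ref{THM:T1split}, which lies between $\frac{1}{2}$ and $\frac{2}{3}$; in particular it is at least $\frac{1}{2}$, so its ratio is at most $2$. Taking the supremum over both families gives a bound of $2$, and thus $S_f^k \subseteq 2\bar S_f$; the factor $2$ cannot be improved in general, as the value $\frac{1}{2}$ is attained when $f$ is the center of a Type 1 triangle.

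Two technical points deserve care, and I expect the first to be the real obstacle. Theorem~\ref{THM:T1split} is stated under the hypothesis that the three corner rays of the triangle belong to $\{r^1, \ldots, r^k\}$, which need not hold for every facet in $\mathcal{F}$ (for instance, facets arising through the ray condition of Theorem~\ref{THM:mainRfk}). To obtain the lower bound $\frac{1}{2}$ uniformly, I would augment the ray set with the three corner rays of the triangle and observe that setting the new variables to $0$ embeds $S_f^k$ into the augmented split closure with unchanged objective value; hence the augmented minimum is at most the original infimum, while Theorem~\ref{THM:T1split} applies to the augmented instance and bounds that minimum below by $\frac{1}{2}$. This yields $\inf\{\sum_i \psi(r^i) s_i : s \in S_f^k\} \geq \frac{1}{2}$ for every Type 1 triangle facet.

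The second point is that $\bar S_f$ is not known to be a polyhedron, so Theorem~\ref{THM:goemans} does not apply verbatim. This is harmless here: only the inclusion $P \subseteq \alpha Q$ is needed, and the proof of that direction verifies one inequality of $\alpha Q$ at a time. Replacing the maximum by a supremum over the (possibly infinite) family of defining inequalities, the same argument shows $S_f^k \subseteq \alpha \bar S_f$ for $\alpha = \sup_i\{1/\inf\{a^i s : s\in S_f^k\}\} \le 2$, which is exactly what we require.
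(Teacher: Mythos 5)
Your proof is correct and follows essentially the same route as the paper, which obtains the corollary directly from Theorem~\ref{THM:TrIntVer} combined with Theorem~\ref{THM:goemans} (split directions give ratio at most $1$, Type 1 triangle directions have minimum at least $\frac{1}{2}$ over $S_f^k$, hence $\alpha \le 2$). Your two technical remarks --- augmenting the ray set with the corner rays so that the $\frac{1}{2}$ bound covers facets arising via the ray condition, and replacing the maximum by a supremum since $\bar S_f$ is not known to be polyhedral --- address points the paper leaves implicit, and both are handled correctly.
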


\section{Integer hull vs. triangle and quadrilateral closures}
\label{Section7}

In this section we present the proof of Theorem~\ref{THM:T&Q}. We
show that the triangle closure $T_f^k$ and the quadrilateral closure
$Q_f^k$ both approximate the integer hull $R_f^k$ to within a factor
of two. As outlined in Section~\ref{SEC:proof_sketch}, we can show
this by taking a facet of $R_f^k$, and optimizing in that direction
over $T_f^k$ or $Q_f^k$. As noted in that section, we need to
analyze the optimization problems (\ref{eq:facet_over_Tf}) and
(\ref{eq:facet_over_Qf}), where the objective function comes from a
maximal lattice-free triangle or quadrilateral.

\subsection{Approximating the integer hull by the triangle
closure}
\label{sub:IHbyT}

We only need to consider facets of $R_f^k$ derived from
quadrilaterals to obtain the objective function of problem
(\ref{eq:facet_over_Tf}). We prove the following result.

\begin{theorem}
Let $Q$ be a maximal lattice-free quadrilateral with corresponding
minimal function $\psi$ and generating a facet $\displaystyle
\sum_{i=1}^k \psi(r^i) s_i \geq 1$ of $R_f^k$. Then
$$
\inf\left\{ \displaystyle \sum_{i=1}^k \psi(r^i) s_i \ : \; s \in
T_f^k\right\} \geq \frac{1}{2} \ .
$$
\end{theorem}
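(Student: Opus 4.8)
The plan is to exhibit explicit triangle inequalities, valid for $T_f^k$, whose coefficients on the relevant rays never exceed $2$; the claimed bound of $\tfrac12$ then follows immediately by scaling.

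First I would normalize the problem. By Remark~\ref{rem:simplify}, Assumption~\ref{ass:scaling} and Theorem~\ref{THM:corner}, it suffices to treat the case $k=4$ in which $r^1,\dots,r^4$ are exactly the four corner rays of $Q$, with $f+r^i$ equal to the $i$-th corner $c^i$ of $Q$; then $\psi(r^i)=1$ for each $i$ and the objective becomes $\sum_{i=1}^4 s_i$. So the goal reduces to showing $\inf\{\sum_{i=1}^4 s_i : s\in T_f^4\}\ge \tfrac12$. The key reduction is this: suppose I can produce a maximal lattice-free triangle $T$ with $f$ in its interior such that $\psi_T(r^i)\le 2$ for $i=1,\dots,4$. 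Since $\sum_{i=1}^4\psi_T(r^i)s_i\ge 1$ is one of the triangle inequalities defining $T_f^4$, every $s\in T_f^4$ satisfies $2\sum_{i=1}^4 s_i\ge \sum_{i=1}^4\psi_T(r^i)s_i\ge 1$, i.e. $\sum_i s_i \ge \tfrac12$.

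By the intersection-cut description of $\psi_T$, we have $\psi_T(r^i)=|r^i|/|p^i-f|$, where $p^i$ is the point where the half-line $f+\lambda r^i$ meets the boundary of $T$; since $|r^i|=|c^i-f|$, the bound $\psi_T(r^i)\le 2$ is equivalent to requiring that along each corner ray the boundary of $T$ lie at least halfway from $f$ to $c^i$. Geometrically this holds as soon as $T$ contains the shrunk quadrilateral $Q':=f+\tfrac12(Q-f)$, whose vertices are exactly the midpoints $f+\tfrac12 r^i$; this explains the constant $2$. The crux is therefore to construct, for an arbitrary maximal lattice-free quadrilateral $Q$ and arbitrary $f$ in its interior, a lattice-free triangle containing $Q'$. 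Here I would invoke Theorem~\ref{THM:maxconv}(iii): after a unimodular transformation the four edge-lattice-points of $Q$ are the vertices $(0,0),(1,0),(1,1),(0,1)$ of the unit square, and $Q$ is cut out by four lines, one through each of these points. Note that $Q'\subseteq Q$ is lattice-free and, being shrunk by a factor $\tfrac12$ toward $f$, stays strictly away from the four lattice points on the boundary of $Q$, so there is room to cap it off. I would then exhibit the required triangle(s) explicitly in these coordinates --- natural candidates are the Type 1 triangle with vertices $(0,0),(2,0),(0,2)$ together with a suitable reflection, or two Type 2 triangles built on the edge lines of $Q$ --- and verify the coefficient bound by direct computation.

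The main obstacle is precisely this last step: a single triangle with all four coefficients at most $2$ need not exist for every shape of $Q$ and every position of $f$, since $Q$ is itself maximal and hence cannot be enclosed in any larger lattice-free convex set. I expect the verification to require a short case analysis on the location of $f$ inside $Q$, organized by the diagonals of the inscribed unit square, and in the awkward cases to replace the single triangle by two triangles $T^1,T^2$ and to average their inequalities: it then suffices to show $\psi_{T^1}(r^i)+\psi_{T^2}(r^i)\le 4$ for every $i$, a relaxed per-triangle requirement that can be met by arranging each corner ray to be well covered by at least one of the two triangles. Establishing these coefficient bounds uniformly over all admissible $Q$ and all interior $f$ is the technical heart of the argument; once they are in hand, the conclusion $\sum_{i=1}^4 s_i\ge\tfrac12$ is immediate as above.
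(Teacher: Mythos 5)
Your setup matches the paper's: reduce via Theorem~\ref{THM:mainRfk} and Remark~\ref{rem:simplify} to $k=4$ with $r^1,\ldots,r^4$ the corner rays of $Q$ (noting that if the facet is also a triangle inequality the bound is trivial), normalize the four boundary lattice points to the unit square, and take $\sum_{i=1}^4 s_i$ as objective; even your candidate construction --- Type 2 triangles built on the edge lines of $Q$ --- is exactly the paper's choice ($T_1$ uses the two edges of $Q$ meeting at $f+r^4$ capped by $x_1=1$, and $T_2$ the two edges meeting at $f+r^1$ capped by $x_2=1$). But your argument stops precisely where the content of the theorem lies, and both certificates you propose in place of that content are inadequate. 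Write the edge of $Q$ through $(1,1)$ as $\frac{x_1}{t}+\frac{t-1}{t}x_2=1$ with $t>1$, as in LP (\ref{LP8}). Then $\psi_{T_2}(r^2)=1$ while $\psi_{T_1}(r^2)=\frac{\alpha-f_1}{1-f_1}$ can be as large as $2t-1$. For a skewed quadrilateral ($t>2$, $f_1$ near $\frac12$, $\alpha$ near $t$) this gives $\psi_{T_1}(r^2)+\psi_{T_2}(r^2)$ close to $2t>4$, so your equal-weight averaging condition $\psi_{T^1}(r^i)+\psi_{T^2}(r^i)\le 4$ provably fails; and the single-triangle certificate, which is equivalent to finding a lattice-free triangle containing the shrunk quadrilateral $f+\frac12(Q-f)$, is exactly what you concede you cannot guarantee. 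Moreover, since the paper's relaxation has value \emph{exactly} $\frac12$ at $t=2$, there is no slack: no certificate cruder than a shape-dependent one can work uniformly.

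What does work is a convex combination $\lambda T_1+(1-\lambda)T_2$ with weights depending on $Q$: all combined coefficients are at most $2$ precisely when $\max\left\{0,\frac{3-t}{2}\right\}\le\lambda\le\frac{1}{2t-2}$ (substituting the worst-case coefficients $2t-1$ and $\frac{t+1}{t-1}$, the compatibility of these bounds reduces to $(t-2)^2\ge 0$), and no fixed $\lambda$, in particular not $\lambda=\frac12$, satisfies this for all $t$. Establishing such a shape-dependent certificate is tantamount to the LP analysis you were hoping to sidestep, and it is what the paper actually does: it keeps both triangle inequalities as a relaxation of (\ref{eq:facet_over_Tf}), bounds the two nontrivial coefficients by $2t-1$ and $2\frac{t}{t-1}-1$ using $f_1,f_2\le\frac12$, solves the resulting two-constraint LP in closed form to get value $\frac{t^2-2t+2}{t^2}$, and minimizes over $t>1$ to obtain $\frac12$ at $t=2$. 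So the gap in your proposal is genuine and is not a matter of routine verification: the ``technical heart'' you defer is the entire proof, and the specific sufficient conditions you offer in its place are false in general.
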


\begin{proof}
The theorem holds if the facet defining inequality can also be
obtained as a triangle inequality. Therefore by
Theorem~\ref{THM:mainRfk}, we may assume that rays $r^1, \ldots r^4$
are the corners rays of $Q$ (see Figure~\ref{fig:quad_with
triangle}). We remind the reader of Remark~\ref{rem:simplify},
showing that we can assume that $k = 4$ and that the four rays are
exactly the corner rays of $Q$.

By a unimodular transformation, we may further assume that the four
integral points on the boundary of $Q$ are $(0,0), (1,0), (1,1),
(0,1)$. Moreover, by symmetry, we may assume that the fractional
point $f$ satisfies $f_1\leq \frac{1}{2}$ and $f_2\leq \frac{1}{2}$
as rotating this region  about $(\frac{1}{2},\frac{1}{2})$ by
multiples of $\frac{\pi}{2}$ covers the entire quadrilateral. Note
that $f_1 < 0$ and $f_2 < 0$ are possible.

We relax Problem  (\ref{eq:facet_over_Tf}) by keeping only two of
the triangle inequalities, defined by triangles $T_1$ and $T_2$.
$T_1$ has corner $f+r^4$ and edges supported by the two
edges of $Q$ incident with that corner and by the line $x=1$. $T_2$
has corner $f+r^1$ and edges supported by the two edges
of $Q$ incident with that corner and by the line $y=1$. The two
triangles are depicted in dashed lines in Figure~\ref{fig:quad_with
triangle}.

\begin{figure}[htbp]
   \begin{center}
     \scalebox{.5}{\epsfig{file=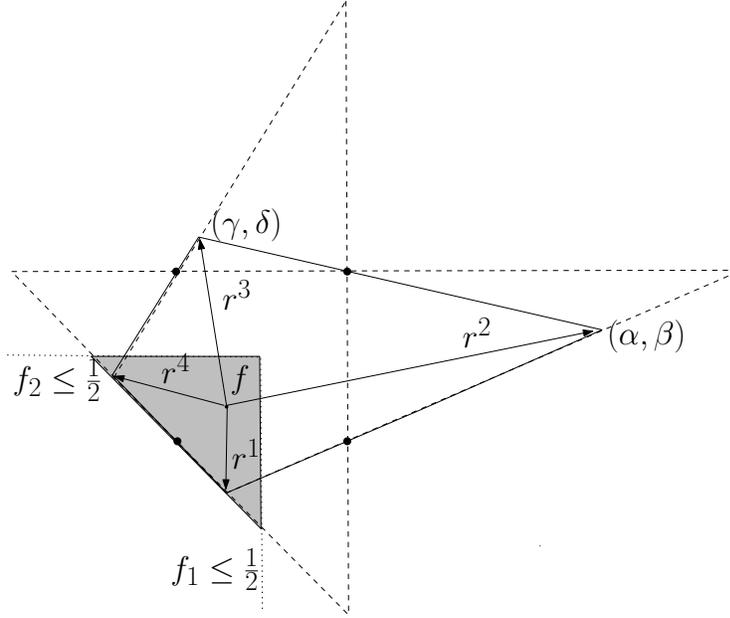}}
\caption{Approximating a quadrilateral inequality with triangle inequalities}
\label{fig:quad_with triangle}
   \end{center}
\end{figure}

Thus, Problem (\ref{eq:facet_over_Tf}) can be relaxed to the LP



\begin{equation}\label{LP7}
\begin{array}{rlcl}
  \;\; \min & s_1 + s_2 + s_3 + s_4 \\
         &  \displaystyle \sum_{i=1}^4 \psi_{T_1}(r^i)s_i & \geq 1 &
(\textrm{Triangle }T_1) \\[0.1in]
          &  \displaystyle \sum_{i=1}^4 \psi_{T_2}(r^i)s_i & \geq 1 &
(\textrm{Triangle }T_2) \\[0.1in]
&  s  \in  \mathbb{R}_+^4 .
\end{array}
\end{equation}

Let $(\alpha, \beta) = f + r^2$ and $(\gamma, \delta) = f + r^3$.
Computing the coefficients $\psi_{T_1}(r^2)$ and $\psi_{T_2}(r^3)$,
LP (\ref{LP7}) becomes

\begin{equation}\label{LP8}
\begin{array}{rlcl}
  \;\; \min & s_1 + s_2 + s_3 + s_4 \\
& s_1 + \displaystyle \frac{\alpha-f_1}{1-f_1}s_2 + s_3 + s_4 & \geq 1 &
(T_1) \\[0.1in]
&  s_1 + s_2 + \displaystyle \frac{\delta-f_2}{1-f_2}s_3 + s_4 & \geq 1 &
(T_2) \\[0.1in]
&  s  \in  \mathbb{R}_+^4 .
\end{array}
\end{equation}

Using the equation of the edge of $Q$ connecting
$f + r^2$ and $f + r^3$, we can find bounds on
$\psi_{T_1}(r^2)$ and $\psi_{T_2}(r^3)$. The edge has equation $x_1 \frac{1}{t}
+ \frac{t-1}{t} x_2 = 1$ for some $1 < t < \infty$.
Therefore $\alpha \leq t$ and $\delta \leq \frac{t}{t-1}$.
Using these two inequalities together with $f_1 \leq \frac{1}{2}$ and $f_2 \leq \frac{1}{2}$
we get
\[
\frac{\alpha-f_1}{1-f_1} = \frac{\alpha - 1}{1-f_1} + 1 \leq 2(t-1)
+ 1 = 2t -1
\hspace{1cm} \mbox{and} \hspace{1cm}
\frac{\delta-f_2}{1-f_2} \leq 2\frac{t}{t-1} - 1 \ .
\]

Using these bounds, we obtain the relaxation of LP (\ref{LP8})

\begin{equation}
\begin{array}{rlcl}
  \;\; \min & s_1 + s_2 + s_3 + s_4 \\[0.1in]
         &       s_1 + (2t-1)s_2 + s_3 + s_4 & \geq 1 &
(T_1) \\[0.1in]
          &  s_1 + s_2 + \displaystyle (2\frac{t}{t-1} - 1)s_3 + s_4 & \geq 1 &
(T_2) \\[0.1in]
&  s  \in  \mathbb{R}_+^4 .
\end{array}
\end{equation}

Set $\lambda = 2t - 1$ and $\mu = 2\frac{t}{t-1} - 1$. Then $t > 1$
implies $\lambda >1$ and $\mu > 1$. The optimal solution of the
above LP is $s_1 = s_4 = 0$, $s_2 = \frac{\mu - 1}{\lambda\mu - 1}$
and $s_3 = \frac{\lambda - 1}{\lambda\mu - 1}$ with value

\[
s_1 + s_2 + s_3 + s_4 = \frac{\lambda + \mu - 2}{\lambda\mu - 1} = \frac{t^2 - 2t
+ 2}{t^2} \ .
\]

To find the minimum of this expression for $t >1$, we set its derivative to 0,
and get the solution $t=2$. Thus the minimum value of
$s_1 + s_2 + s_3 + s_4$ is equal to $\frac{1}{2}$.

\end{proof}

\subsection{Approximating the integer hull by the quadrilateral
closure}
\label{sub:IHbyQuad}

In this section, we study Problem (\ref{eq:facet_over_Qf}).
We can approximate the
facets derived from Type $1$ and Type $2$ triangles using
quadrilaterals in a similar manner as the splits were approximated
by triangles and quadrilaterals. See Figure~\ref{fig:triangle_with
quad}. We again define the set $X$ of points which lie strictly
inside the Type $1$ or Type $2$ triangle, similar to the proof
of Theorem~\ref{THM:inclusion}. Then we can find quadrilaterals as shown in
Figure~\ref{fig:triangle_with quad} that contain the set $X$. The
proof goes through in exactly the same manner.

However triangles of Type $3$ pose a problem. They
cannot be approximated to any desired precision by a
sequence of quadrilaterals.

\begin{figure}[htbp]
\begin{center}
\scalebox{.5}{\epsfig{file=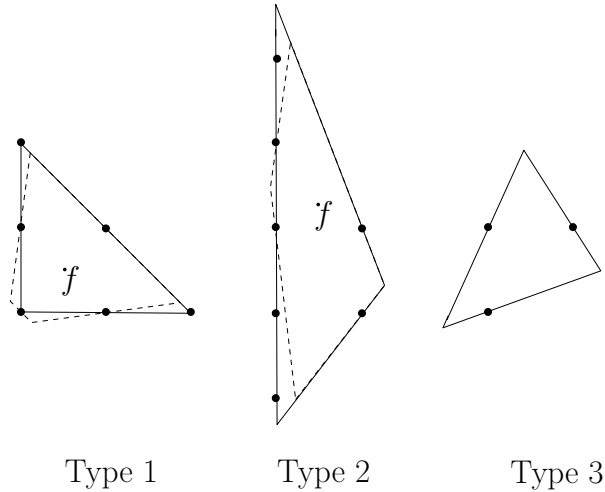}}
\caption{Approximating a triangle inequality with a
quadrilateral inequality} \label{fig:triangle_with quad}
\end{center}
\end{figure}

In this section, we work under Assumption \ref{ass:scaling}.
By Theorem \ref{THM:mainRfk}, a Type $3$ triangle $T$ defines a facet
if and only if either the ray condition holds, or
all three corner rays are present. First we consider the case where the ray condition holds.

\begin{theorem} \label{thm:7.2}
Let $T$ be a triangle of Type 3 with corresponding minimal function
$\psi_T$ defining a facet of $R_f^k$. If the ray condition holds for
$T$ then Problem (\ref{eq:facet_over_Qf}) has optimal value 1.
\end{theorem}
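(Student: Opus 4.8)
The plan is to evaluate Problem~(\ref{eq:facet_over_Qf}) by proving the bounds $\le 1$ and $\ge 1$ separately. Throughout I work under Assumption~\ref{ass:scaling}, so that $\psi_T(r^j)=1$ for every $j$ (a triangle is bounded, so no ray has $\psi_T(r^j)=0$), the objective is $\sum_j s_j$, and the boundary point $p^j=f+r^j$ lies on an edge of $T$. The upper bound is immediate: the inequality $\sum_j\psi_T(r^j)s_j\ge1$ is a facet of $R_f^k$, hence valid and tight at some $\bar s\in R_f^k$; since $R_f^k\subseteq Q_f^k$ this $\bar s$ is feasible for~(\ref{eq:facet_over_Qf}) and attains value $1$, so the optimum is at most $1$.

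All the content is in the lower bound, which says that the Type~3 triangle inequality is itself \emph{valid} for $Q_f^k$. I would prove this by imitating the proof of Theorem~\ref{THM:inclusion}. Take any $\bar s$ with $\sum_j\psi_T(r^j)\bar s_j<1$, put $\epsilon=1-\sum_j\psi_T(r^j)\bar s_j>0$ and, exactly as there, set $\delta=\epsilon/(2ks_{max})$ and $c(r^j)=\psi_T(r^j)+\delta$, so that the finitely many points $p^j=f+\frac{1}{c(r^j)}r^j$ lie strictly inside $T$. If one can produce a \emph{maximal lattice-free quadrilateral} $Q$ with $f$ in its interior whose interior also contains all the $p^j$, then $\psi_Q(r^j)\le c(r^j)$ for every $j$, and the same chain of inequalities as in Theorem~\ref{THM:inclusion} gives $\sum_j\psi_Q(r^j)\bar s_j\le\sum_j c(r^j)\bar s_j=\sum_j\psi_T(r^j)\bar s_j+\delta\sum_j\bar s_j\le 1-\tfrac{\epsilon}{2}<1$. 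Since $Q$ is a maximal lattice-free quadrilateral, $\sum_j\psi_Q(r^j)s_j\ge1$ is one of the defining inequalities of $Q_f^k$, so $\bar s\notin Q_f^k$. As $\bar s$ was an arbitrary violator, $Q_f^k$ satisfies the triangle inequality and the optimum is at least $1$.

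The single hard step — and, consistent with the remark after Theorem~\ref{THM:mainRfk}, the only place the hypothesis enters — is the construction of the enclosing quadrilateral $Q$. Note that here I cannot first pass to corner rays as in Remark~\ref{rem:simplify}, precisely because the facet is assumed to arise from the ray condition rather than from three present corner rays; the given rays must be used directly. For Type~1 and Type~2 triangles such a $Q$ is always available by cutting a corner (Figure~\ref{fig:triangle_with quad}), but for a Type~3 triangle a cut near a fractional vertex leaves no lattice point on the new edge, so no maximal lattice-free quadrilateral hugs $T$ there; this is the obstruction flagged before the statement. My plan is to unwind the Reduction Algorithm: termination with $P=\emptyset$ forces the boundary points $p^j$ (equivalently the directions $r^j$) to be combinatorially balanced with respect to the three edge lattice points $y^1,y^2,y^3$ of $T$, and in particular they cannot force boundary points near all three vertices simultaneously. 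I would then build $Q$ not as a corner cut but from four lattice points forming a unit parallelogram (Lov\'asz's Theorem~\ref{THM:maxconv}(iii)), chosen in accordance with the positions of $y^1,y^2,y^3$ and with its edges tilted so that $Q$ keeps $f$ and every slightly-shrunken point $p^j$ in its interior while remaining lattice-free.

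I expect the reconciliation of the two demands on $Q$ — being a genuine maximal lattice-free quadrilateral (four lattice points in a unit parallelogram, none in the interior) and at the same time enclosing all the $p^j$ — to be the main obstacle, since these pull the edges of $Q$ in opposite directions and can only be met by reading off the precise output of the Reduction Algorithm, most likely with a short case analysis according to which vertex of $T$ is left free. Once such a $Q$ is produced for each violator $\bar s$, the remainder is the routine domination estimate copied from Theorem~\ref{THM:inclusion}, and combining the two bounds yields optimal value exactly $1$.
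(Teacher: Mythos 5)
Your upper bound and the closing domination estimate are fine, but there is a genuine gap at exactly the point you flag as ``the main obstacle'': you never extract the actual consequence of the ray condition, and the weaker property you propose to extract (that the rays ``cannot force boundary points near all three vertices simultaneously'') is neither proved nor sufficient. What the ray condition actually gives for a Type 3 triangle is much stronger: \emph{every} boundary point $p^j=f+r^j$ is integral, i.e., every ray points at one of the three lattice points $y^1,y^2,y^3$ on the edges of $T$. The paper gets this by a short run through the Reduction Algorithm: any integral $p^j$ surviving Step 1 is uniquely active and is deleted in Step 2, so $P_2$ consists precisely of the non-integral boundary points; Step 3 is vacuous for a Type 3 triangle (it would need an edge segment containing two integral points, and each edge carries only one); hence $P=\emptyset$ at termination forces $P_2=\emptyset$, i.e., there are no non-integral boundary points at all. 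Without this fact your construction cannot go through: if some $p^j$ were a non-integral point of an edge of $T$, a maximal lattice-free quadrilateral (four lattice points forming a unit parallelogram, one in the relative interior of each edge) cannot in general hug $T$ near that point while keeping $f$ in its interior --- this is precisely the obstruction the paper states just before the theorem (Type 3 triangles ``cannot be approximated to any desired precision by a sequence of quadrilaterals''), and it is why the companion result for Type 3 triangles with corner rays present only achieves the factor $\frac{1}{2}$.

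Once the integrality of all the $p^j$ is in hand, the $\epsilon$--$\delta$ shrinking apparatus you borrow from Theorem~\ref{THM:inclusion} becomes unnecessary: a single fixed maximal lattice-free quadrilateral $Q$ through $y^1,y^2,y^3$ and a fourth suitable lattice point, with $f$ in its interior, has every $p^j$ on its boundary, so $\psi_Q(r^j)=\psi_T(r^j)$ for all $j$ and the triangle facet is literally a quadrilateral inequality, hence valid for $Q_f^k$. That one observation replaces the entire limiting argument and the case analysis you anticipate; the missing step in your proposal is the deduction of the integrality of the boundary points from the Reduction Algorithm, not the geometry of tilting the quadrilateral.
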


\begin{proof}
We first prove that if the ray condition holds the points
$p^j = f + r^j$ are integral points on the boundary of $T$,
for $j = 1, \ldots, k$.

For $i=1, 2, 3$, let $P_i$ be the set of points left at the end of Step $i$
of the Reduction Algorithm given in Section \ref{SEC:PRELIM}.
Each $p^j \in P_1$ with $p^j$
integral is uniquely active and is removed during Step 2 of
the Reduction Algorithm. Hence, all points in $P_2$ are non-integral.
Observe that Step 3 can only remove boundary points $p$ and $q$
when the segment
$pq$ contains at least two integral points in its relative interior.
Therefore this step does not remove anything in a Type 3 triangle
and $P_3 = P_2$.

Since the ray condition holds, we have $P_3 = P_2 = \emptyset$ and
$P_1$ contains only integral points. But then $P_1 = P$,
showing that all boundary points
at the beginning of the Reduction Algorithm are integral.

It is then straightforward to construct a maximal lattice-free
quadrilateral $Q$ with $p^j$, $j = 1, \ldots, k$ on its boundary,
and containing $f$ in its interior.
It follows that
the value of Problem (\ref{eq:facet_over_Qf}) is equal to 1.
\end{proof}

We now consider the case where $T$ is a Type $3$ triangle with the
three corner rays present. In this case, we can approximate the
facet obtained from $T$ to within a factor of two by using
inequalities derived from triangles of Type $2$. Define another
relaxation $\bar{T}_f^k$ as the convex set defined by the
intersection of the inequalities derived only from Type $1$ and Type
$2$ triangles. By definition, $T_f^k\subseteq \bar{T}_f^k$. From the
discussion at the beginning of this section, we also know
$Q_f^k\subseteq \bar{T}_f^k$. Hence (\ref{eq:facet_over_Qf}) can be
relaxed to
\begin{equation}\label{eq:facet_over_barTf}
\inf\left\{ \displaystyle \sum_{i=1}^k \psi(r^i) s_i: \; s \in
\bar{T}_f^k\right\} \ .
\end{equation}

\begin{theorem}\label{thm:type3_with_type2}
Let $T$ be a triangle of Type 3 with corresponding minimal function
$\psi$ and generating a facet $\displaystyle \sum_{i=1}^{k}
\psi(r^i) s_i \geq 1$ of $R_f^k$. Then,
$$\inf\left\{ \sum_{i=1}^k \psi(r^i) s_i \ : \; s \in
\bar{T}_f^k\right\} \geq \frac{1}{2} \ .
$$
\end{theorem}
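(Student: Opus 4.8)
The plan is to mirror the argument of Section~\ref{sub:IHbyT}: relax Problem~(\ref{eq:facet_over_barTf}) to a linear program with only two Type~2 triangle inequalities and show that even this relaxation has value at least $\frac12$. Since $T$ defines a facet with the three corner rays present, Theorem~\ref{THM:mainRfk} together with Remark~\ref{rem:simplify} and Theorem~\ref{THM:corner} lets me assume $k=3$ and that $r^1,r^2,r^3$ are exactly the corner rays of $T$; under Assumption~\ref{ass:scaling} the objective becomes $s_1+s_2+s_3$. After a unimodular transformation I would place $T$ in a convenient standard position, fixing its three edge lattice points and its three fractional corners $v_j=f+r^j$, and I would use the residual symmetry of a Type~3 triangle to restrict the location of $f$ in $int(T)$. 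Because the two relaxing inequalities come from Type~2 triangles, they are valid for $\bar T_f^k$, so the value of the relaxed LP is a genuine lower bound for Problem~(\ref{eq:facet_over_barTf}).

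Next I would construct two Type~2 triangles $T_a$ and $T_b$ from $T$. To obtain $T_a$ I keep the edge $v_2v_3$ of $T$ and tilt the two edges of $T$ meeting at $v_1$ about $v_2$ and about $v_3$, turning them into two edges of a new triangle whose remaining edge is supported by a line through a pair of lattice points; the tilt is chosen so that $T_a$ is lattice-free, is of Type~2 (one lattice point on each edge incident to its fractional apex and two on the third edge), contains $f$ in its interior, and pulls its apex inward from $v_1$. Then $v_2$ and $v_3$ remain corners of $T_a$, so by the homogeneity in Theorem~\ref{THM:minfunc} we get $\psi_{T_a}(r^2)=\psi_{T_a}(r^3)=1$, while $r^1$ now exits $T_a$ before reaching $v_1$, giving $\psi_{T_a}(r^1)=\lambda$ for some $\lambda>1$. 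Symmetrically, $T_b$ fixes $v_1,v_3$ and tilts at $v_2$, giving $\psi_{T_b}(r^1)=\psi_{T_b}(r^3)=1$ and $\psi_{T_b}(r^2)=\mu$ for some $\mu>1$.

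Relaxing Problem~(\ref{eq:facet_over_barTf}) to these two constraints yields the LP of minimizing $s_1+s_2+s_3$ subject to $\lambda s_1+s_2+s_3\ge1$, $s_1+\mu s_2+s_3\ge1$ and $s\ge0$. Its optimum sets $s_3=0$ and makes both constraints tight, giving the value $\frac{\lambda+\mu-2}{\lambda\mu-1}$ (the column for $s_3$ is dominated by a suitable combination of the $s_1,s_2$ columns). A one-line computation shows $\frac{\lambda+\mu-2}{\lambda\mu-1}\ge\frac12$ if and only if $(\lambda-2)(\mu-2)\le1$. Thus the whole theorem reduces to the single geometric inequality $(\lambda-2)(\mu-2)\le1$, where $\lambda$ and $\mu$ are the two ``tilt'' coefficients expressed in terms of the standardized Type~3 data and the coordinates of $f$.

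The main obstacle is exactly this last step: writing $\lambda$ and $\mu$ explicitly from the geometry of the two tilted corners and proving $(\lambda-2)(\mu-2)\le1$ for every admissible position of $f$ in $int(T)$. I expect the work to split into (i) verifying that lines through suitable pairs of lattice points really do produce lattice-free Type~2 triangles $T_a,T_b$ containing $f$ for all relevant configurations, and (ii) bounding $\lambda$ and $\mu$ using that $T$ is maximal lattice-free and $f$ is interior, in the spirit of how the edge parameter $t$ controlled the coefficients in Section~\ref{sub:IHbyT}. The bound should be tight, with $(\lambda-2)(\mu-2)=1$ (for instance $\lambda=\mu=3$) attained in the worst case; this is precisely what caps the approximation at factor~$2$ and explains the earlier remark that Type~3 triangles, unlike those of Type~1 and Type~2, cannot be approximated to arbitrary precision.
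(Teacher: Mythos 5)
Your high-level strategy is the paper's: reduce to the three corner rays, relax to two Type~2 inequalities whose coefficient vectors are $(\lambda,1,1)$ and $(1,\mu,1)$, and observe that the resulting LP has value $\frac{\lambda+\mu-2}{\lambda\mu-1}$, which is at least $\frac12$ exactly when $(\lambda-2)(\mu-2)\le 1$. That reduction is correct. But the proof has two genuine gaps. First, your construction of $T_a$ cannot work as described: if $v_2$ and $v_3$ remain corners and the apex is pulled inward from $v_1$, then $T_a\subseteq T$, so $T_a$ meets the lattice only in the three boundary points of the Type~3 triangle $T$ --- too few for a maximal lattice-free Type~2 triangle, which needs two lattice points on one edge and one in the relative interior of each of the other two (and in particular the retained edge $v_2v_3$ carries only one lattice point, contradicting your own description of which edge carries two). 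The paper's triangles are built the other way around: keep the \emph{two} edges of $T$ incident to one corner, and replace the opposite edge by a line through two lattice points, one of which lies \emph{outside} $T$ (e.g.\ the line $x_1=1$ through $(1,0)$ and $(1,1)$ in the hexagonal-lattice coordinates); this cuts off exactly one corner of $T$ while leaving the other two on the boundary, producing the coefficient pattern you want.

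Second, and more importantly, the step you defer --- ``writing $\lambda$ and $\mu$ explicitly \dots and proving $(\lambda-2)(\mu-2)\le1$'' --- is where essentially the entire proof lives, and it is not a routine verification. The paper needs: an affine normalization to the lattice generated by $(1,0)$ and $(\frac12,\frac{\sqrt3}2)$; a parametrization of all Type~3 triangles by three parameters $t_1,t_2,t_3$ (Lemma~\ref{LEM:type_3_triangle}); and a two-case analysis on the position of $f$ ($f_1\le\frac12,\,f_2\le\frac12$ versus $f_1\le0,\,f_1+f_2\le\frac12$), whose union under rotation by $\pm 2\pi/3$ about the centroid covers $T$. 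The two cases require \emph{different} second triangles $T_2$, and the coefficient bounds (e.g.\ $\mu\le 2t_3-1$, $\lambda\le 2\frac{t_3}{t_3-1}-1$) depend on which region $f$ lies in; the resulting minima are $\frac12$ (at $t_3=2$) in one case and $\frac{1}{1+\sqrt{1/2}}$ in the other. A single pair of triangles with a uniform bound on $(\lambda-2)(\mu-2)$ does not cover all positions of $f$, so the inequality you isolate is not provable without this case split. As it stands, the proposal identifies the right LP skeleton but omits both the correct geometric construction and the quantitative argument that makes the bound true.
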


This theorem implies directly the following corollary.

\begin{corollary}
$Q_f^k \subseteq 2R_f^k$.
\end{corollary}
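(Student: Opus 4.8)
The plan is to invoke Theorem~\ref{THM:goemans} with $Q = R_f^k$ and $P = Q_f^k$. Since $R_f^k$ is a polyhedron of blocking type whose nontrivial facets are all of the form $a^i s \geq 1$ with $a^i \geq 0$ and right-hand side $b_i = 1 > 0$, and since the inclusion $R_f^k \subseteq Q_f^k \subseteq \mathbb{R}_+^k$ makes $Q_f^k$ a relaxation of $R_f^k$ with $0 \notin R_f^k$, Theorem~\ref{THM:goemans} tells us that the smallest $\alpha \geq 1$ with $Q_f^k \subseteq \alpha R_f^k$ equals
$$\max_i \frac{1}{\inf\{a^i s \ : \ s \in Q_f^k\}},$$
the maximum ranging over the nontrivial facets of $R_f^k$. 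Thus it suffices to show that for every such facet we have $\inf\{a^i s : s \in Q_f^k\} \geq \tfrac{1}{2}$, which forces $\alpha \leq 2$ and hence $Q_f^k \subseteq 2 R_f^k$.

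First I would dispose of the facets for which the infimum is in fact at least $1$. By Theorem~\ref{THM:mainRfk} every nontrivial facet of $R_f^k$ arises from a split, a triangle, or a quadrilateral. For a quadrilateral facet the defining inequality is itself one of the inequalities defining $Q_f^k$, so the infimum is at least $1$. For a split facet, Theorem~\ref{THM:inclusion} gives $Q_f^k \subseteq S_f^k$, so the split inequality is valid for $Q_f^k$ and again the infimum is at least $1$. For triangle facets of Type $1$ or Type $2$, the approximation of such inequalities by quadrilateral inequalities described at the start of Section~\ref{sub:IHbyQuad} shows that every Type $1$ or Type $2$ triangle inequality is valid for $Q_f^k$, so once more the infimum is at least $1$.

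The only remaining facets come from Type $3$ triangles, and here the two results proved above combine to give the factor $\tfrac{1}{2}$. By Theorem~\ref{THM:mainRfk}(ii) such a triangle gives a facet only when either the ray condition holds or all three corner rays are present. In the first subcase Theorem~\ref{thm:7.2} shows that Problem~(\ref{eq:facet_over_Qf}) has value exactly $1$, so $\inf\{\sum_i \psi(r^i) s_i : s \in Q_f^k\} = 1$. In the second subcase Theorem~\ref{thm:type3_with_type2} gives $\inf\{\sum_i \psi(r^i) s_i : s \in \bar T_f^k\} \geq \tfrac{1}{2}$; since $Q_f^k \subseteq \bar T_f^k$, the infimum over the smaller set $Q_f^k$ is at least as large, hence at least $\tfrac{1}{2}$. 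Having verified $\inf\{a^i s : s \in Q_f^k\} \geq \tfrac{1}{2}$ in every case, Theorem~\ref{THM:goemans} delivers $Q_f^k \subseteq 2 R_f^k$.

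The step I expect to require the most care is confirming that the case split is genuinely exhaustive, so that every nontrivial facet falls into exactly one of the four situations treated above and the two Type $3$ subcases are complete by Theorem~\ref{THM:mainRfk}(ii). No new estimate is needed beyond the factor-$\tfrac{1}{2}$ results already in hand: the work is purely organizational, marshalling Theorem~\ref{THM:goemans} together with Theorem~\ref{THM:inclusion}, the quadrilateral approximation of Type $1$ and Type $2$ triangles, and Theorems~\ref{thm:7.2} and~\ref{thm:type3_with_type2}.
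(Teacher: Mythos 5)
Your proposal is correct and follows essentially the same route as the paper: the corollary is exactly the assembly, via Theorem~\ref{THM:goemans} and the facet characterization of Theorem~\ref{THM:mainRfk}, of Theorem~\ref{THM:inclusion} for split facets, the quadrilateral approximation of Type 1 and Type 2 triangle inequalities from the start of Section~\ref{sub:IHbyQuad} (which gives $Q_f^k \subseteq \bar{T}_f^k$), and Theorems~\ref{thm:7.2} and~\ref{thm:type3_with_type2} for the two Type 3 subcases. You have merely made explicit the case analysis the paper leaves implicit when it states that the corollary follows directly.
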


\begin{proof}[Proof of Theorem~\ref{thm:type3_with_type2}]
We first make an affine transformation to simplify computations.
Let $y^1, y^2, y^3$ be the three lattice points on the
sides of $T$. We choose the transformation such that the two following
properties are satisfied.

\begin{itemize}
\item[(i)] The standard lattice is mapped to the lattice
generated by the vectors $v^1 = (1,0)$ and
$v^2 = (\frac{1}{2}, \frac{\sqrt{3}}{2})$, i.e. all points of the form
$z_1 v^1 + z_2 v^2$, where $z_1, z_2$ are integers.
\item[(ii)]
$y^1, y^2, y^3$ are respectively mapped to $(0,0),(1,0),(0,1)$ in the above
lattice.
\end{itemize}

We use the basis $v^1, v^2$ for
$\mathbb{R}^2$ for all calculations and equations in the remainder of the
proof. See Figure~\ref{fig:triangle_with triangle}.

\begin{figure}[htbp]
\begin{center}
\scalebox{.5}{\epsfig{file=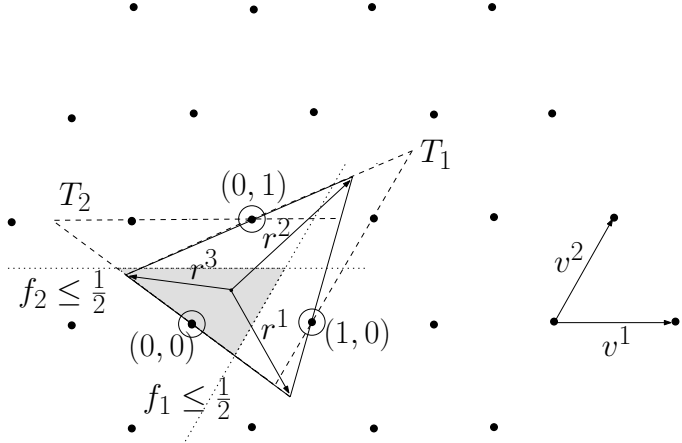}}
\caption{Approximating a Type $3$ triangle inequality with Type $2$ triangle
inequalities. The Type 3 triangle is in solid lines. The basis vectors are
$v^1 = (1, 0)$ and $v^2 = (\frac{1}{2}, \frac{\sqrt{3}}{2})$
}
\label{fig:triangle_with triangle}
\end{center}
\end{figure}

With this transformation, we can get a simple characterization for
the Type $3$ triangles. See Figure~\ref{fig:triangle_with triangle}
for an example. We make the following claim about the relative
orientations of the three sides of the Type $3$ triangle.

\begin{lemma} \label{LEM:type_3_triangle}
Let $\mathcal{F}$ be the family of triangles formed by three lines
given by:
\[
\begin{array}{rcc}
\textrm{Line }1 :
& \displaystyle -\frac{x_1}{t_1} + x_2 = 1 & \mbox{with \ }
0 < t_1 < \infty \ ; \\[0.1in]
\textrm{Line }2 :
& t_2 \ x_1 + x_2 = 0 & \mbox{with \ } 0< t_2 < 1 \ ; \\[0.1in]
\textrm{Line }3 :
& x_1 + \displaystyle \frac{x_2}{t_3} = 1 & \mbox{with \ }
1 < t_3 < \infty \ .
\end{array}
\]

Any Type $3$ triangle is either a triangle from $\mathcal{F}$
or a reflection of a triangle from $\mathcal{F}$ about the
line $x_1 = x_2$.
\end{lemma}

\begin{proof}

Take any Type $3$ triangle $T$. Consider the edge passing through
$(1,0)$. Since it cannot go through the interior of the equilateral
triangle $(0,0), (0,1), (1,0)$, there are only two choices for its
orientation : a) It can go through the segment $(0,1),(1,1)$, or b)
It can go through the segment $(0,0),(1,-1)$.

In the first case its equation is that of $\textrm{Line }3$. This
now forces the edge of $T$ passing through $(0,0)$ to have the
equation for $\textrm{Line }2$. This is because the only other
possibility for this edge would be for the line to pass through the
segment $(-1,1),(0,1)$. But then the lattice point $(1,-1)$ is
included in the interior of the triangle. Similarly, the third edge
must now have $\textrm{Line }1$'s equation, because $(-1,1)$ needs
to be excluded from the interior.

Case b) can be mapped to Case a) by a reflection about the line $x_1 = x_2$.
\end{proof}

\begin{remark}
We can choose any values for $t_1, t_2, t_3$ independently in
the prescribed ranges, and we get a lattice free triangle. This
observation shows that the family $\mathcal{F}$ defined above is
exactly the family of all Type $3$ triangles modulo an affine
transformation.
\end{remark}

We now show how to bound Problem (\ref{eq:facet_over_barTf}) and
hence prove Theorem~\ref{THM:T&Q_2}.

Consider any Type $3$ triangle $T$. It
is sufficient to consider the case where the lines supporting the edges
of $T$ have equations as stated in the Lemma \ref{LEM:type_3_triangle}.

We consider two cases for the position of the fractional point
$f = (f_1,f_2)$.
\begin{enumerate}
\item[(i)] $f_1\leq\frac{1}{2}$, $f_2\leq\frac{1}{2}$;
\item[(ii)] $f_1 \leq 0$, $f_1+f_2\leq \frac{1}{2}$.
\end{enumerate}

The union of the two regions described above when rotated by
$2\pi/3$ and $4\pi/3$ about the point $(\frac{1}{3},\frac{1}{3})$
(the centroid of the triangle formed by (0,0),(0,1) and (1,0)),
cover all of $\mathbb{R}^2$. Hence they cover all of $T$ and by
rotational symmetry, investigating these two cases is enough.

For the first case, we relax Problem (\ref{eq:facet_over_barTf}) by
using only two inequalities from $\bar{T}_f^k$. These are derived
from Type $2$ triangles $T_1$ and $T_2$ (see
Figure~\ref{fig:triangle_with triangle}), which are defined as
follows. $T_1$ has $\textrm{Line }1$ and $\textrm{Line }2$
supporting two of its edges and $x_1 = 1$ supporting the third one
(with more than one integral point). $T_2$ has $\textrm{Line }2$ and
$\textrm{Line }3$ supporting two of its edges and $x_2 = 1$
supporting the third one (with more than one integral point). Let
$\psi_{T_1}$ and $\psi_{T_2}$ be the corresponding minimal functions
derived from $T_1$ and $T_2$.

The following LP is a relaxation of Problem
(\ref{eq:facet_over_barTf}).

\begin{equation}\label{LP9}
\begin{array}{rlcl}
  \;\; \min & \displaystyle \sum_{i=1}^k s_i \\[0.1in]
         &      \displaystyle \sum_{i=1}^k \psi_{T_1}(r^i)s_i & \geq 1 &
(\textrm{Triangle }T_1) \\[0.1in]
        & \displaystyle \sum_{i=1}^k \psi_{T_2}(r^i)s_i & \geq 1 &
(\textrm{Triangle }T_2) \\[0.1in]
          &  s  \in  \mathbb{R}_+^k .
\end{array}
\end{equation}

Theorem~\ref{THM:corner} and Remark~\ref{rem:simplify} imply that
LP (\ref{LP9}) is equivalent to

\begin{equation}\label{LP1}
\begin{array}{rlcl}
  \;\; \min & s_1 + s_2 + s_3 \\
         &      \psi_{T_1}(r^1)s_1 + \psi_{T_1}(r^2)s_2 + \psi_{T_1}(r^3)s_3 & \geq 1 &
(\textrm{Triangle }T_1) \\
        & \psi_{T_2}(r^1)s_1 + \psi_{T_2}(r^2)s_2 + \psi_{T_2}(r^3)s_3 & \geq 1 & (\textrm{Triangle }T_2) \\
          &  s  \in  \mathbb{R}_+^3 .
\end{array}
\end{equation}

where $r^1$, $r^2$ and $r^3$ are the three corner rays (see
Figure~\ref{fig:triangle_with triangle}).

We now show that this LP has an optimal value of at least $\frac{1}{2}$.

Note that $\psi_{T_1}(r^2) = \psi_{T_1}(r^3) = 1$. $\psi_{T_1}(r^1)$
needs to be computed. First we compute $r^1$ and $r^2$ in terms of
$t_1, t_2, t_3, f_1$ and $f_2$.

The intersection of $\textrm{Line }2$ and $\textrm{Line }3$ is given
by
$$
\left(\frac{t_3}{t_3 - t_2}, \frac{-t_2t_3}{t_3 - t_2}\right)
\hspace{0.4in} \mbox{and \ thus} \hspace{0.4in}
r^1 = \left(\frac{t_3}{t_3 - t_2}, \frac{-t_2t_3}{t_3 - t_2}\right) - (f_1,f_2) \ .
$$
As $\psi_{T_1}(r^1)=\frac{1}{\gamma}$ where $\gamma$ is such that
$(f_1,f_2) + \gamma r^1$ lies on the line $x_1=1$, we get
$$
\psi_{T_1}(r^1) = \frac{\frac{t_3}{t_3 - t_2} - f_1}{1-f_1} \ .
$$
Similarly, we only need $\psi_{T_2}(r^2)$ as $\psi_{T_2}(r^1) =
\psi_{T_2}(r^3) = 1$. The intersection of $\textrm{Line }1$ and
$\textrm{Line }3$ is
$$
\left(\frac{t_1(t_3 - 1)}{1 + t_1t_3}, \frac{t_3(t_1+ 1)}{1 +
t_1t_3}\right) \hspace{0.4in} \mbox{and \ thus} \hspace{0.4in} r^2 =
\left(\frac{t_1(t_3 - 1)}{1 + t_1t_3}, \frac{t_3(t_1 + 1)}{1 +
t_1t_3}\right)-(f_1,f_2) \ .
$$
Computing the
coefficient like before, we get
$$
\psi_{T_2}(r^2) = \frac{\frac{t_3(t_1 + 1)}{1 + t_1t_3} - f_2}{1 - f_2} \ .
$$
Hence LP (\ref{LP1}) becomes
\begin{equation}\label{LP2}
\begin{array}{rlcl}
  \;\; \min & s_1 + s_2 + s_3 \\
&  \displaystyle
\frac{\frac{t_3}{t_3 - t_2} - f_1}{1-f_1}s_1 + s_2 +s_3     & \geq 1 &
(\textrm{Triangle }T_1) \\[0.1in]
& s_1 +
\displaystyle
\frac{\frac{t_3(t_1 + 1)}{1 + t_1t_3} - f_2}{1 - f_2}s_2 + s_3 &
\geq 1 & (\textrm{Triangle }T_2) \\
&  s  \in  \mathbb{R}_+^3 .
\end{array}
\end{equation}

As
$$\frac{\frac{t_3}{t_3 - t_2} - f_1}{1-f_1} =
\frac{\frac{t_3}{t_3 - t_2} - 1}{1-f_1} + 1 \ ,
$$
the assumptions
$f_1\leq \frac{1}{2}$ and $t_2 < 1$ yield
$$
\frac{\frac{t_3}{t_3 - t_2} - 1}{1-f_1} + 1
\leq 2\frac{t_3}{t_3 - 1} - 1 \ .
$$
Similarly,
$$
\frac{\frac{t_3(t_1 + 1)}{1 + t_1t_3} - f_2}{1 - f_2} =
\frac{\frac{t_3(t_1 + 1)}{1 + t_1t_3} - 1}{1 - f_2} + 1
$$
and the assumption $f_2\leq \frac{1}{2}$ gives
$$
\frac{\frac{t_3(t_1 + 1)}{1 + t_1t_3} - 1}{1 - f_2} + 1 \leq
2\frac{t_3(t_1 + 1)}{1 + t_1t_3} - 1 = 2\frac{t_3 - 1}{1 + t_1t_3} + 1 \ .
$$
Under the assumption $t_1 > 0$, we obtain
$2\frac{t_3 - 1}{1 + t_1t_3} + 1 \leq 2t_3 - 1$.

To get a lower bound on (\ref{LP2}), we thus can relax its constraints to

\begin{equation}\label{LP3}
\begin{array}{rlcl}
  \;\; \min & s_1 + s_2 + s_3 \\[0.1in]
&  (2 \displaystyle \frac{t_3}{t_3 - 1} - 1)s_1 + s_2 +s_3     & \geq 1 &
(\textrm{Triangle }T_1) \\[0.1in]
& s_1 + (2t_3 - 1)s_2 + s_3 & \geq 1 & (\textrm{Triangle }T_2) \\[0.1in]
&  s  \in  \mathbb{R}_+^3 .
\end{array}\end{equation}

Set $\lambda = 2t_3 - 1$ and $\mu = 2\frac{t_3}{t_3-1} - 1$. Then
$t_3 > 1$ implies $\lambda >1$ and $\mu > 1$.
The optimal solution of LP (\ref{LP3}) is
$s_1 = \frac{\lambda - 1}{\lambda\mu - 1}$,
$s_2 = \frac{\mu - 1}{\lambda\mu - 1}$ and
$s_3 = 0$
with value
\[
s_1 + s_2 + s_3 = \frac{\lambda + \mu - 2}{\lambda\mu - 1} = \frac{t_3^2 -
2t_3 + 2}{t_3^2} \ .
\]
To find the minimum over all $t_3 >1$, we set the derivative to 0,
which gives the solution $t_3=2$. Thus the minimum value of
$s_1 + s_2 + s_3$ is $\frac{1}{2}$.
\bigskip

Next we consider $f_1 \leq 0$ and $f_1+f_2 \leq \frac{1}{2}$, the
shaded region in Figure~\ref{fig:triangle_with triangle_2}. We relax
Problem (\ref{eq:facet_over_barTf}) using only two inequalities.
We take $T_1$ as before, but $T_2$ is the triangle formed by the
following three lines : $\textrm{Line }2$ from
Lemma~\ref{LEM:type_3_triangle}, line parallel to $\textrm{Line }1$
from Lemma~\ref{LEM:type_3_triangle} but passing through $(-1,1)$
and the line passing through $(1,0),(0,1)$.

\begin{figure}[htbp]
\begin{center}
\scalebox{.5}{\epsfig{file=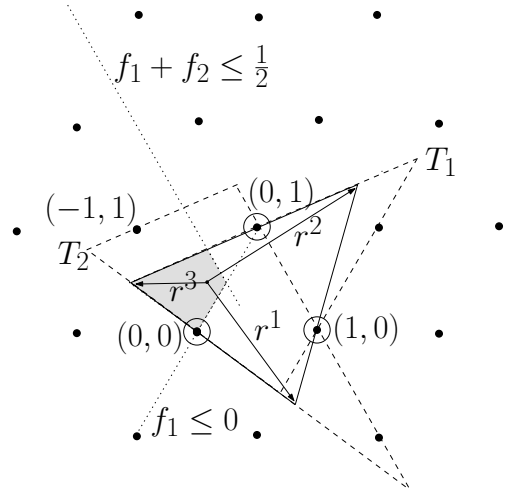}}
\caption{Approximating a Type $3$ triangle inequality with Type $2$
triangles inequalities - Case 2} \label{fig:triangle_with triangle_2}
\end{center}
\end{figure}

As in the previous case, we formulate the relaxation as an LP with
constraints corresponding to $T_1$ and $T_2$. The only difference from
LP (\ref{LP1}) is the coefficient $\psi_{T_2}(r^2)$. This time the
point $(f_1,f_2) + \gamma r^2$ lies on the line $x_1 + x_2 = 1$ (recall
that $\psi_{T_2}(r^2) = \frac{1}{\gamma}$). This gives us
$$
\psi_{T_2}(r^2) = \frac{\frac{2t_1t_3 + t_3 - t_1}{1 + t_1t_3} -f_1-f_2}
{1-f_1-f_2} \ .
$$
We then have the following LP.

\begin{equation}\label{LP4}
\begin{array}{rlcl}
  \;\; \min & s_1 + s_2 + s_3 \\
&  \displaystyle
\frac{\frac{t_3}{t_3 - t_2} - f_1}{1-f_1}s_1 + s_2 +s_3     & \geq 1 &
(\textrm{Triangle }T_1) \\[0.2in]
& s_1 + \displaystyle
\frac{\frac{2t_1t_3 + t_3 - t_1}{1 + t_1t_3} -f_1-f_2}{1-f_1-f_2}s_2 + s_3 & \geq 1 & (\textrm{Triangle }T_2) \\[0.2in]
          &  s  \in  \mathbb{R}_+^3 .
\end{array}
\end{equation}

We simplify the coefficients as earlier :
$$
\frac{\frac{t_3}{t_3 - t_2} - f_1}{1-f_1} =
1 + \frac{\frac{t_3}{t_3 - t_2} - 1}{1-f_1}
\hspace{0.4in} \mbox{and} \hspace{0.4in}
\frac{\frac{2t_1t_3 + t_3 - t_1}{1 + t_1t_3}
-f_1-f_2}{1-f_1-f_2} =
1 + \frac{\frac{2t_1t_3 + t_3 - t_1}{1 + t_1t_3} - 1}{1-f_1-f_2} \ .
$$

Using the assumptions $f_1\leq 0, f_1+f_2\leq\frac{1}{2}$, we
get that
$$
1 + \frac{\frac{t_3}{t_3 - t_2} - 1}{1-f_1} \leq
\frac{t_3}{t_3 - t_2}
\hspace{0.4in} \mbox{and} \hspace{0.4in}
1 + \frac{\frac{2t_1t_3 + t_3 - t_1}{1 + t_1t_3} - 1}{1-f_1-f_2}
\leq 2 (\frac{2t_1t_3 + t_3 - t_1}{1 + t_1t_3}) - 1 \ .
$$
We also have the conditions $t_2 < 1$ and $t_1 > 0$.
$t_2 < 1$ implies $\frac{t_3}{t_3 - t_2} \leq \frac{t_3}{t_3 -
1}$. Moreover
$$
2\left(\frac{2t_1t_3 + t_3 - t_1}{1 + t_1t_3}\right) - 1 =
2\left(2 + \frac{t_3-t_1 -2}{1 + t_1t_3}\right)-1
$$
decreases in value as $t_1$ increases. Its maximum value is less
than the value for $t_1 = 0$, because of the condition $t_1 > 0$.
It follows that $2(\frac{2t_1t_3 + t_3 -
t_1}{1 + t_1t_3}) - 1 \leq 2t_3 - 1$. After putting these
relaxations into the constraints of LP (\ref{LP4}), we get

\begin{equation}\label{LP5}
\begin{array}{rlcl}
\;\; \min & s_1 + s_2 + s_3 \\[0.1in]
&  \displaystyle \frac{t_3}{t_3 - 1}s_1 + s_2 +s_3     & \geq 1 &
(\textrm{Triangle }T_1) \\[0.1in]
& s_1 + (2t_3 - 1)s_2 + s_3 & \geq 1 & (\textrm{Triangle }T_2) \\
&  s  \in  \mathbb{R}_+^3 .
\end{array}
\end{equation}

The optimal solution of LP (\ref{LP5}) is
$$
s_3 = 0, \hspace{0.1in}
s_1 = \frac{2(t_3-1)^2}{2t_3^2 - 2t_3 + 1}, \hspace{0.1in}
s_2 = \frac{1}{2t_3^2 - 2t_3 + 1}, \hspace{0.1in}
\mbox{and} \hspace{0.1in}
s_1 + s_2 + s_3 = \frac{2t_3^2 -4t_3 + 3}{2t_3^2-2t_3+1} \ .
$$
Under the condition $t_3>1$, the minimum value of $s_1 + s_2 + s_3$
is achieved for $t_3=1 + \sqrt{\frac{1}{2}}$ with value $\frac{1}{1
+ \sqrt{\frac{1}{2}}}
> \frac{1}{2}$.
\end{proof}

\section{Split closure vs. a single triangle or quadrilateral
inequality}

In this section, we prove Theorem \ref{THM:T&Q_2}.

This is done by showing that there exist examples of integer
programs (\ref{SI}) where the optimal value for optimizing in the
direction of a triangle (or quadrilateral) inequality over the split
closure $S_f^k$ can be arbitrarily small. We give such examples for
facets derived from triangles of Type $2$ and Type $3$, and from
quadrilaterals.

These examples have the property that the point $f$ lies in the relative
interior of a segment joining two integral points at distance 1.

Furthermore, in these examples, the
rays end on the boundary of the triangle or quadrilateral and hence
the facet corresponding to it is of the form $\sum_{j=1}^k s_j \geq
1$. We show that the following LP has optimal value much less
than 1.

\begin{equation} \label{Split}
\begin{array}{rlcl}
  z_{SPLIT} =    \;\; \min & \displaystyle \sum_{j=1}^k s_j \\[0.1in]
         &       \displaystyle \sum_{j=1}^{k} \psi(r^j) s_j & \geq 1 &
\mbox{ for all splits } B_\psi \\[0.1in]
          &  s  \in  \mathbb{R}_+^k .
\end{array}
\end{equation}

Theorem~\ref{THM:goemans} then implies
Theorem~\ref{THM:T&Q_2}.

A key step in the proof is a method for constructing a polyhedron
contained in the split closure (Lemma \ref{lem:conv_comb}).
The resulting LP implies an upper bound on $z_{SPLIT}$.
We then give a family of examples showing that this upper bound can be
arbitrarily close to 0.
We start the proof with an easy lemma.

\subsection{An easy lemma}

Refer to Figure
\ref{FIG:LConv} for an illustration of the following lemma.

\begin{figure}[htbp]
\centering
\scalebox{.5}{\epsfig{file=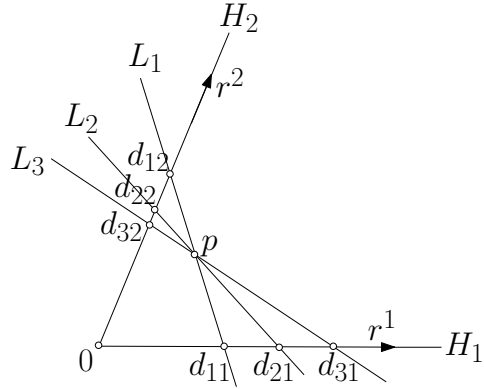}}
\caption{Illustration for Lemma \ref{LEM:LConv}} \label{FIG:LConv}
\end{figure}

\begin{lemma}
\label{LEM:LConv} Let $r^1$ and $r^2$ be two rays that are not
multiples of each others and let $H_1$ and $H_2$ be the half-lines
generated by nonnegative multiples of $r^1$ and $r^2$ respectively.
Let $p := k_1 \ r^1 + k_2 \ r^2$ with $k_1, k_2 > 0$. Let $L_1,
L_2$, and $L_3$ be three distinct lines going through $p$ such that
each of the lines intersects both $H_1$ and $H_2$ at points other
than the origin. Let $d_{ij}$ be the distance from the origin to the
intersection of line $L_i$ with the half-line $H_j$ for $i= 1, 2, 3$
and $j = 1, 2$. Assume that $d_{11} < d_{21} < d_{31}$. Then there
exists $0 < \lambda < 1$ such that
\begin{eqnarray*}
\frac{1}{d_{21}} = \lambda \ \frac{1}{d_{11}} + (1-\lambda) \
\frac{1}{d_{31}} \hspace{1cm} \mbox{and} \hspace{1cm}
\frac{1}{d_{22}} = \lambda \ \frac{1}{d_{12}} + (1-\lambda) \
\frac{1}{d_{32}}.
\end{eqnarray*}

\end{lemma}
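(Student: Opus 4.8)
The plan is to reduce both claimed identities to a single collinearity statement in an auxiliary plane. First I would parametrize the two half-lines: every point of $H_1$ is $\alpha r^1$ with $\alpha \ge 0$ and every point of $H_2$ is $\beta r^2$ with $\beta \ge 0$, so that the intersection of $L_i$ with $H_1$ is $\alpha_i r^1$ and its intersection with $H_2$ is $\beta_i r^2$, where $d_{i1} = \alpha_i \|r^1\|$ and $d_{i2} = \beta_i \|r^2\|$. Since $r^1$ and $r^2$ are not multiples of each other they form a basis of $\mathbb{R}^2$, so coordinates in this basis are unique; note also $\alpha_i, \beta_i \neq 0$ because the lines meet $H_1$ and $H_2$ away from the origin.

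The key step is to exploit that the common point $p = k_1 r^1 + k_2 r^2$ lies on each line $L_i$. Because $L_i$ passes through $\alpha_i r^1$ and $\beta_i r^2$, we can write $p = (1-\mu_i)\alpha_i r^1 + \mu_i \beta_i r^2$ for some scalar $\mu_i$. Matching coordinates in the basis $\{r^1, r^2\}$ gives $(1-\mu_i)\alpha_i = k_1$ and $\mu_i \beta_i = k_2$, and adding the two resulting equations $k_1/\alpha_i + k_2/\beta_i = 1$ eliminates $\mu_i$. Rewriting in terms of distances yields, for each $i = 1,2,3$,
\[
\frac{A}{d_{i1}} + \frac{B}{d_{i2}} = 1, \qquad A := k_1\|r^1\| > 0,\quad B := k_2\|r^2\| > 0 .
\]
In other words, the three points $P_i := \bigl(1/d_{i1},\, 1/d_{i2}\bigr)$ all lie on the single fixed line $A x + B y = 1$ in the plane.

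It then remains to read off $\lambda$. The three points $P_1, P_2, P_3$ are collinear, and their first coordinates are pairwise distinct since $d_{11} < d_{21} < d_{31}$ forces $1/d_{31} < 1/d_{21} < 1/d_{11}$. Hence $1/d_{21}$ lies strictly between $1/d_{11}$ and $1/d_{31}$, so there is a unique $\lambda \in (0,1)$ with $1/d_{21} = \lambda/d_{11} + (1-\lambda)/d_{31}$. Because $P_1, P_2, P_3$ lie on a common line and their first coordinates differ, the affine weight that recovers the first coordinate of $P_2$ from $P_1$ and $P_3$ recovers its second coordinate with the same weight, giving simultaneously $1/d_{22} = \lambda/d_{12} + (1-\lambda)/d_{32}$. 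This is precisely the pair of identities claimed.

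I expect no serious obstacle. The only points requiring care are verifying that $A$ and $B$ are strictly positive, which guarantees the line has strictly negative slope so that strict betweenness of the first coordinates transfers to strict betweenness along the line (and hence to $\lambda \in (0,1)$ rather than a degenerate endpoint), and observing that on a nonvertical line a single weight $\lambda$ controls both coordinates, so that one identity automatically forces the other.
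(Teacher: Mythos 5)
Your proof is correct and follows essentially the same route as the paper: both write each $L_i$ in intercept form $\tfrac{x_1}{d_{i1}}+\tfrac{x_2}{d_{i2}}=1$ with respect to the basis given by the ray directions, use the common point $p$ (with positive coordinates) to force the coefficient pairs $(1/d_{i1},1/d_{i2})$ onto a fixed line, and conclude that $L_2$'s coefficients are a convex combination of those of $L_1$ and $L_3$. Your write-up merely makes explicit the justification that the paper compresses into ``As $L_2$ is a convex combination of $L_1$ and $L_3$.''
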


\begin{proof}
Let $u^i$ be a unit vector in the direction of $r^i$ for $i=1, 2$.
Using $\{u^1, u^2\}$ as a
base of $\mathbb{R}^2$, for $i=1, 2, 3$, $L_i$
has equation
$$
\frac{1}{d_{i1}} x_1 + \frac{1}{d_{i2}} x_2 = 1
$$
As $L_2$ is a convex combination of $L_1$ and $L_3$, there exists $0
< \lambda < 1$ such that $\lambda L_1 + (1-\lambda) L_3 = L_2$. The
result follows.
\end{proof}

\begin{corollary}\label{COR:LConv}
In the situation of Lemma \ref{LEM:LConv}, let
$L_4$ be a line parallel to $r^1$ going through $p$. Let $d_{42}$ be
the distance between the origin and the intersection of $H_2$ with
$L_4$. Then there exists $0 < \lambda < 1$ such that
\begin{eqnarray*}
\frac{1}{d_{21}} = \lambda \ \frac{1}{d_{11}} \hspace{1cm}
\mbox{and} \hspace{1cm} \frac{1}{d_{22}} = \lambda \
\frac{1}{d_{12}} + (1-\lambda) \ \frac{1}{d_{42}}.
\end{eqnarray*}

\end{corollary}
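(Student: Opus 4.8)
The plan is to mimic the proof of Lemma~\ref{LEM:LConv} almost verbatim, treating the line $L_4$ as the degenerate case of a line whose intersection with $H_1$ has receded to infinity. First I would reuse the same coordinate frame: take unit vectors $u^1,u^2$ in the directions of $r^1,r^2$ and use $\{u^1,u^2\}$ as a basis of $\mathbb{R}^2$, so that $H_1$ and $H_2$ become the positive $x_1$- and $x_2$-axes, and every line crossing both half-lines at distances $d_{i1},d_{i2}$ from the origin has the equation $\frac{1}{d_{i1}}x_1+\frac{1}{d_{i2}}x_2=1$. In this frame a line through $p$ that is parallel to $r^1$ (the $x_1$-direction) is horizontal, so $L_4$ has equation $\frac{1}{d_{42}}x_2=1$; equivalently it is the limiting member of the family above with $x_1$-coefficient equal to $0$, which records the fact that $L_4$ never meets $H_1$.

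The key step is the pencil-of-lines observation used in the lemma. Writing each line $L_i$ as $a_i x_1 + b_i x_2 = 1$ with $(a_i,b_i)=\bigl(\tfrac{1}{d_{i1}},\tfrac{1}{d_{i2}}\bigr)$, the condition that $L_i$ passes through $p=(p_1,p_2)$ is $a_i p_1 + b_i p_2 = 1$, so the coefficient vectors of $L_1$, $L_2$, and $L_4$ all lie on one affine line in $(a,b)$-space. Since $L_1$ meets $H_1$ while $L_4$ is parallel to it, their coefficient vectors are distinct, so there is a unique scalar $\lambda$ with $(a_2,b_2)=\lambda(a_1,b_1)+(1-\lambda)(a_4,b_4)$. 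Reading off the two coordinates, and using $a_4=\tfrac{1}{d_{41}}=0$, gives the two claimed identities at once:
\[
\frac{1}{d_{21}} = \lambda\,\frac{1}{d_{11}}
\qquad\text{and}\qquad
\frac{1}{d_{22}} = \lambda\,\frac{1}{d_{12}} + (1-\lambda)\,\frac{1}{d_{42}} .
\]

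What remains is to confirm $0<\lambda<1$, which I expect to be the only point needing a short argument rather than pure bookkeeping. From the first identity, $\lambda = d_{11}/d_{21}$; positivity of the distances gives $\lambda>0$, and the hypothesis $d_{11}<d_{21}$ inherited from the situation of Lemma~\ref{LEM:LConv} (where $d_{11}<d_{21}<d_{31}$) gives $\lambda<1$. Geometrically this is exactly the statement that, as a line rotates about $p$, its intersection with $H_1$ moves outward from $d_{11}$ through $d_{21}$ and on to $L_4$ at infinity, so $L_2$ sits strictly between $L_1$ and $L_4$ in the pencil. I do not anticipate any real obstacle here; the corollary is essentially the lemma with $L_3$ replaced by the limiting line $L_4$ and the coefficient $1/d_{41}$ set to $0$.
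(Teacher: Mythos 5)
Your proposal is correct and follows exactly the route the paper intends: its proof of Corollary~\ref{COR:LConv} is simply ``Similar to the proof of Lemma~\ref{LEM:LConv},'' and you carry out that similarity faithfully by working in the basis $\{u^1,u^2\}$, writing $L_4$ as the member of the pencil through $p$ with vanishing $x_1$-coefficient, and reading off the affine combination coordinatewise. Your extra care in checking $0<\lambda<1$ via $\lambda=d_{11}/d_{21}$ and the inherited hypothesis $d_{11}<d_{21}$ is a welcome detail the paper leaves implicit.
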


\begin{proof}
Similar to the proof of Lemma \ref{LEM:LConv}.
\end{proof}

\begin{figure}[htbp]
\begin{center}
\scalebox{.5}{\epsfig{file=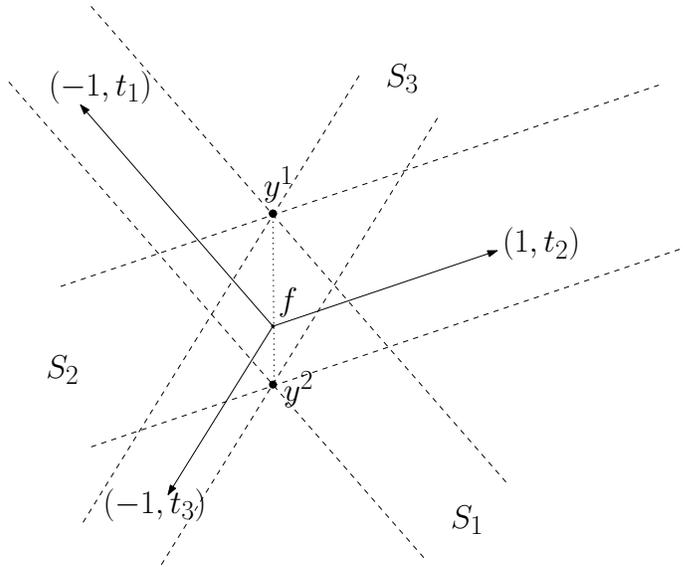}}
\caption{Dominating the Split closure with pseudo-splits}
\label{fig:split_closure}
\end{center}
\end{figure}

\subsection{A polyhedron contained in the split closure}
\label{Subsec:on_segment}

 Our examples for proving Theorem~\ref{THM:T&Q_2} have the property that
the point $f$ lies in the relative interior of a segment joining two
integral points $y^1$, $y^2$ at distance 1.

To obtain an upper bound on the value $z_{SPLIT}$ of the split
closure, we define some inequalities which dominate the split
closure (\ref{Split}). A \emph{pseudo-split} is the convex set
between two distinct parallel lines passing through $y^1$ and $y^2$
respectively. The direction of the lines, called {\it direction} of
the pseudo-split, is a parameter. Figure~\ref{fig:split_closure}
illustrates three pseudo-splits in the directions of three rays. The
\emph{pseudo-split inequality} is derived from a pseudo-split
exactly in the same way as from any maximal lattice-free convex set.
Note that pseudo-splits are in general not lattice-free and hence do
not generate valid inequalities for $R_f^k$. However, we can
dominate any split inequality cutting $f$ by an inequality derived
from these convex sets. Indeed, consider any split $S$ containing
the fractional point $f$ in its interior. Since $f$ lies on the
segment $y^1y^2$, both boundary lines of $S$ pass through the
segment $y^1y^2$. The pseudo-split with direction identical to the
direction of $S$ generates an inequality that dominates the split
inequality derived from $S$, as the coefficient for any ray is
smaller in the pseudo-split inequality.

The next lemma states that we can dominate the split closure by
using only the inequalities generated by the pseudo-splits with
direction parallel to the rays $r^1, \ldots , r^k$ under mild assumptions
on the rays.

\begin{lemma}\label{lem:conv_comb}
Assume that none of the rays $r^1, \ldots, r^k$ has a zero first
component and that $f = (0, f_2)$ with $0 < f_2 < 1$. Let
$y^1 = (0, 1)$ and $y^2 = (0, 0)$, these two points being used to
construct pseudo-splits. Let $S_1, \ldots ,S_k$ be the pseudo-splits
in the directions of rays $r^1, \ldots, r^k$ and denote the
corresponding minimal functions by $\psi_{S_1},\ldots,\psi_{S_k}$.
Let $S$ be any split with $f$ in its interior and let $S'$ be the
corresponding pseudo-split. Then the inequality $\sum_{j=1}^k
\psi_{S'}(r^j) s_j \geq 1$ corresponding to $S'$ is dominated by a
convex combination of the inequalities $\sum_{j=1}^k
\psi_{S_i}(r^j)s_j \geq 1$, $i=1,\ldots, k$. Therefore, the split
inequality corresponding to $S$ is dominated by a convex combination
of the inequalities corresponding to $\psi_{S_1},\ldots,\psi_{S_k}$.
\end{lemma}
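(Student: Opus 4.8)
The plan is to parametrize the pseudo-splits by the slope of their common direction and to exhibit, for an \emph{arbitrary} pseudo-split $S'$, convex multipliers $\mu_1,\ldots,\mu_k\ge 0$ with $\sum_i\mu_i=1$ such that $\sum_i\mu_i\,\psi_{S_i}(r^j)\le \psi_{S'}(r^j)$ for every $j$. Once this coefficientwise domination is established, both conclusions of the lemma follow at once: the convex combination $\sum_j\bigl(\sum_i\mu_i\psi_{S_i}(r^j)\bigr)s_j\ge 1$ is implied by the inequalities for $S_1,\ldots,S_k$ and in turn implies the inequality for $S'$, and since we already know $\psi_{S'}(r^j)\le \psi_S(r^j)$ for all $j$ (the pseudo-split $S'$ is at least as wide as $S$), it also implies the split inequality for $S$.

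First I would record an explicit formula for the coefficients. Writing $r^j=(r^j_1,r^j_2)$ with $r^j_1\neq 0$, the two boundary lines of the pseudo-split of slope $m$ are $m x_1-x_2=0$ through $y^2=(0,0)$ and $m x_1-x_2=-1$ through $y^1=(0,1)$, and $f=(0,f_2)$ gives value $-f_2\in(-1,0)$, placing $f$ strictly between them. Intersecting the half-line $f+\lambda r^j$ with whichever boundary it meets gives
\[
\psi_{S'}(r^j)=\max\!\left\{\frac{m r^j_1-r^j_2}{f_2},\ \frac{r^j_2-m r^j_1}{1-f_2}\right\},
\]
which is exactly the reciprocal-distance quantity of Lemma~\ref{LEM:LConv}. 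The crucial structural fact is that, as a function of the slope $m$, this expression is piecewise linear and convex with a single breakpoint at $m=m_j:=r^j_2/r^j_1$, where it vanishes; since $m_j$ is the slope of the direction of $r^j$, the pseudo-split $S_j$ is precisely the one of slope $m_j$.

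The heart of the argument is an adjacency observation. I would sort the distinct slopes $m_1,\ldots,m_k$ and, given the slope $m^\ast$ of $S'$, locate the two consecutive values $m_p\le m^\ast\le m_q$ bracketing it. Because no ray slope lies strictly between $m_p$ and $m_q$, every coordinate function $\psi_{(\cdot)}(r^j)$ has its unique breakpoint outside the open interval $(m_p,m_q)$ and is therefore linear on $[m_p,m_q]$ \emph{simultaneously for all} $j$. Hence, with $\lambda$ defined by $m^\ast=\lambda m_p+(1-\lambda)m_q$, we obtain $\psi_{S'}(r^j)=\lambda\,\psi_{S_p}(r^j)+(1-\lambda)\,\psi_{S_q}(r^j)$ for every $j$ at once. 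This is the content of Lemma~\ref{LEM:LConv} applied to the three concurrent boundary lines (through $y^1$ or $y^2$ according to the side each ray exits), with Corollary~\ref{COR:LConv} handling the degenerate cases $j=p$ and $j=q$, where one boundary line is parallel to the ray and the corresponding coefficient is $0$. If instead $m^\ast$ falls outside $[m_1,m_k]$, then each $\psi_{(\cdot)}(r^j)$ is monotone beyond its breakpoint, so $\psi_{S'}(r^j)\ge \psi_{S_1}(r^j)$ (or $\ge\psi_{S_k}(r^j)$) for all $j$ and $S'$ is dominated by a single extreme pseudo-split.

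The step I expect to be the main obstacle is the \emph{simultaneity}: a priori the multiplier relating $S'$ to $S_p$ and $S_q$ could depend on which ray $r^j$ we examine, because different rays exit through different boundary lines (through $y^1$ versus $y^2$). The adjacency choice is exactly what resolves this—the absence of an intervening breakpoint forces all coordinate functions to be jointly linear on $[m_p,m_q]$, so one $\lambda$ serves every ray. The remaining bookkeeping is routine: grouping ties among the $m_j$, checking that every split containing $f$ has a non-vertical (hence well-parametrized) direction so that $m^\ast$ is finite, and verifying the parallel cases through Corollary~\ref{COR:LConv}.
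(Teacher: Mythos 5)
Your proof is correct, and its overall architecture matches the paper's: pseudo-splits whose direction falls outside the range of ray slopes are dominated by a single extreme pseudo-split $S_i$, while a pseudo-split whose slope lies between two adjacent ray slopes is written as an exact convex combination of those two with one multiplier serving every ray; the final ``therefore'' then follows from the coefficientwise domination $\psi_{S'}\le\psi_S$ noted before the lemma. Where you genuinely diverge is in how the common multiplier is produced. The paper argues geometrically: it applies Lemma~\ref{LEM:LConv} (and Corollary~\ref{COR:LConv}) separately to the rays exiting through the boundary line at $y^1$ and to those exiting at $y^2$, obtaining a priori two multipliers $\lambda_1,\lambda_2$, and then reconciles them by observing that the triangles $fz^S_1y^1$ and $fz^S_2y^2$ cut off by an auxiliary line through $f$ are homothetic with ratio $(1-f_2)/f_2$, forcing $\lambda_1=\lambda_2$. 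You instead use the closed form $\psi_{S_m}(r^j)=\max\{(mr^j_1-r^j_2)/f_2,\ (r^j_2-mr^j_1)/(1-f_2)\}$, which is piecewise linear and convex in the slope $m$ with its unique breakpoint at $m_j=r^j_2/r^j_1$; joint linearity on a breakpoint-free interval between adjacent ray slopes then hands you a single $\lambda$ for all rays with no case split on which boundary a ray exits, and monotonicity of a convex function beyond its minimizer gives the extreme case. This is cleaner: the ``simultaneity'' issue you correctly identify as the crux disappears by construction rather than requiring the homothety step. One cosmetic remark: your appeal to Lemma~\ref{LEM:LConv} is only an analogy, since your $\lambda$ interpolates slopes while the lemma's interpolates normalized line functionals; in this particular geometry the two coincide (because $y^1-f$ and $y^2-f$ lie on the $x_2$-axis, the normalizing constants $1-f_2$ and $f_2$ do not depend on $m$), but your explicit-formula argument is self-contained and does not actually need the lemma.
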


\begin{proof}
As a convention, the direction
of a pseudo-split forms an angle with the $x_1$-axis in the range of
$]-\frac{\pi}{2},\frac{\pi}{2}[$. Without loss of generality, assume
that the slope of the directions of the pseudo-splits corresponding to
the rays $r^1, \ldots r^k$ are monotonically non increasing.
We can assume that the direction of $S'$ is different than the direction of
any of the rays in $\{r^1, \ldots, r^k\}$  as otherwise the result
trivially holds.

\begin{figure}[htbp]
\begin{center}
\scalebox{.55}{\epsfig{file=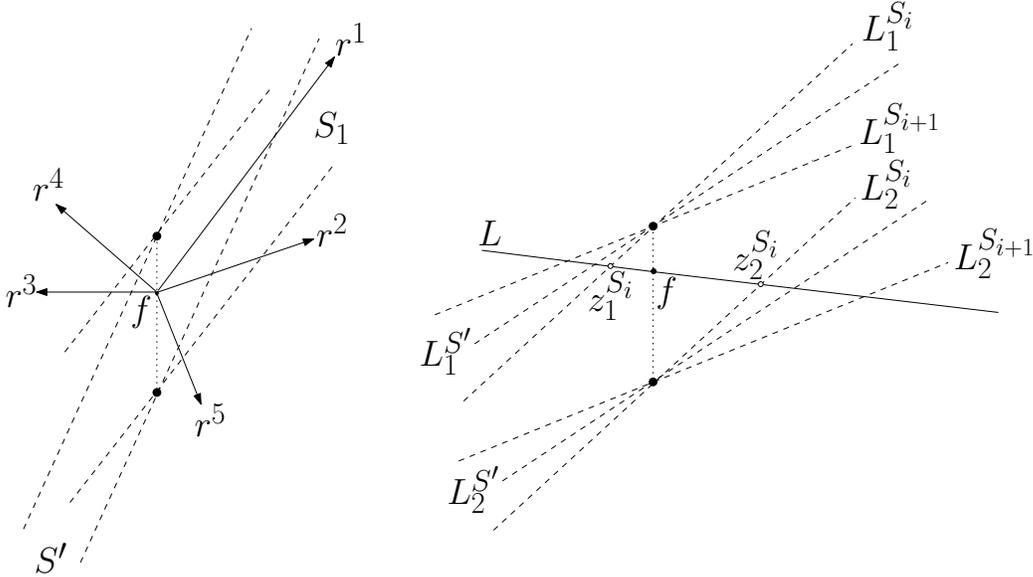}} \caption{Bounding
the split closure with a finite number of pseudo-splits}
\label{fig:split_closure_3}
\end{center}
\end{figure}

First note that, if $S'$ has a direction with slope greater
than the slope of $r^1$, then the inequality
generated by $S'$ is dominated by the one generated by $S_1$.
Indeed, any ray $r^j$ having a slope smaller than $r^1$ has its boundary
point for $S'$ closer to $f$ than the one for $S_1$.
It follows that $\psi_{S'}(r^j) \geq \psi_{S_1}(r^j)$. See
Figure~\ref{fig:split_closure_3}. A similar reasoning holds for the
case where $S'$ has a direction with slope smaller
than the slope of $r^k$.

Thus we only have to consider the case where the slope of the direction of
$S'$ is strictly between the
slopes of the directions of $S_i$ and $S_{i+1}$, for some $1 \le i \le k-1$.
We claim the following.

\begin{observation}\label{obs:conv_comb}
There exists a $0< \lambda <1$ such that $\psi_{S'}(r) =
\lambda\psi_{S_i}(r) + (1-\lambda)\psi_{S_{i+1}}(r)$ for every ray
$r \in \{r^1, \ldots, r^k\}$.
\end{observation}

\begin{proof}
For each pseudo-split $S \in \{S', S_i, S_{i+1}\}$, we denote by
$L^S_1$ its boundary line passing though $(0,1)$ and by $L^S_2$
its boundary line passing through $(0,0)$.

Consider first any ray $r^j$ with $j < i$ and let
$L^{r^j}$ be the half-line $f + \mu r^j$, $\mu \ge 0$.
We have that $L^{r^j}$ has a slope greater than the slope of the
direction of $S_i$ and thus $L^{r^j}$ intersects the boundaries of $S', S_i$
and $S_{i+1}$ on $L^{S'}_1$, $L^{S_{i+1}}_1$ and $L^{S_i}_1$.
By Lemma~\ref{LEM:LConv},
there exists a $0 < \lambda_1 < 1$ such
that, for all $r \in \{r^1, \ldots, r^{i-1}\}$
\begin{equation}\label{eq:conv_comb1}
\psi_{S'}(r) = \lambda_1\psi_{S_i}(r) + (1-\lambda_1)\psi_{S_{i+1}}(r) \ .
\end{equation}

By Corollary~\ref{COR:LConv}, equation (\ref{eq:conv_comb1}) also holds
for $r = r^i$.

Using a similar reasoning for the rays $\{r^{i+1}, \ldots, r^k\}$
and the boundary lines $L^{S'}_2$, $L^{S_{i+1}}_2$ and $L^{S_i}_2$,
there exists a $0 < \lambda_2 < 1$ such that, for all
$r \in \{r^{i+1}, \ldots, r^{k}\}$
\begin{equation}\label{eq:conv_comb2}
\psi_{S'}(r) = \lambda_2\psi_{S_i}(r) + (1-\lambda_2)\psi_{S_{i+1}}(r) \ .
\end{equation}

It remains to show that $\lambda_1 = \lambda_2$. Consider any line $L$
through $f$ that is not collinear with $r^i$ or $r^{i+1}$ and not going
through $y^1$ and $y^2$. For each $S \in
\{S', S_i, S_{i+1}\}$, let $z^S_1$ (resp. $z^S_2$) be the
intersection of $L$ with $L^{S}_1$ (resp. $L^S_2$) and let $d^S_{1}$
(resp. $d^S_{2}$) be the distance from $f$ to $z^S_1$ (resp.
$z^S_2$). See Figure~\ref{fig:split_closure_3}. By
Lemma~\ref{LEM:LConv}

\begin{eqnarray}\label{eq:equalLambdas}
\frac{1}{d^{S'}_{1}} = \lambda_1 \ \frac{1}{d^{S_i}_{1}} + (1-\lambda_1) \
\frac{1}{d^{S_{i+1}}_{1}} \hspace{1cm} \mbox{and} \hspace{1cm}
\frac{1}{d^{S'}_{2}} = \lambda_2 \ \frac{1}{d^{S_i}_{2}} + (1-\lambda_2) \
\frac{1}{d^{S_{i+1}}_{2}}.
\end{eqnarray}

The length of segments $fy^1, fz^S_1, fy^2$ and $fz^S_2$ are
respectively $1-f_2, d^S_1, f_2$ and $d^S_2$.
Observe that the triangles $fz^S_1y^1$ and $fz^S_2y^2$
are homothetic with homothetic ratio $t = \frac{1-f_2}{f_2}$.
It follows that
%
$\frac{d^S_1}{d^S_2} = t$.
%
Substituting  $d^S_1$ by $t \cdot d^S_2$ in (\ref{eq:equalLambdas})
yields $\lambda_1 = \lambda_2$.
\end{proof}

This observation proves the lemma.

\end{proof}

Using the above lemma, we can bound the split closure for three
rays. We assume that none of the rays has a zero first component
and that the three rays generate $\mathbb{R}^2$.
Without loss of generality, we make the
following assumptions. The rays are $r^1 = \mu_1(-1, t_1)$, $r^2 =
\mu_2(1,t_2)$ and $r^3 = \mu_3(-1, t_3)$, where $t_i$'s are rational
numbers in the range $]-\infty,\infty[$, with $t_1 > t_3$ and
$\mu_i$'s are scaling factors with $\mu_i > 0$. Any configuration of
three rays satisfying the above assumptions either fits this
description or is a reflection of it about the segment
$(0,0),(0,1)$. In addition, we must have $-t_1 < t_2 < -t_3$. See
Figure~\ref{fig:split_closure} for an illustration.

\begin{theorem}
Assume that $f = (0, f_2)$ with $0 < f_2 < 1$.
Consider rays $r^1 = \mu_1(-1, t_1)$, $r^2 = \mu_2(1,t_2)$ and $r^3 =
\mu_3(-1, t_3)$, where $t_i$'s are rational numbers with
$-t_1 < t_2 < -t_3$ and $\mu_i > 0$. Then
$$
z_{SPLIT} \leq
\frac{1}{t_1 - t_3}\left(\frac{1-f_2}{\mu_1}+\frac{f_2}{\mu_3}\right) \ .
$$
\end{theorem}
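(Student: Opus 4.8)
The plan is to bound $z_{SPLIT}$ from above by exhibiting a single feasible point of the LP (\ref{Split}) whose objective value equals the right-hand side of the claimed inequality. The key reduction is Lemma~\ref{lem:conv_comb}: since $f = (0, f_2)$ with $0 < f_2 < 1$ and no ray has a zero first component, every split inequality cutting $f$ is dominated by a convex combination of the three pseudo-split inequalities generated by $S_1, S_2, S_3$, the pseudo-splits in the directions of $r^1, r^2, r^3$. Consequently, any point $\bar s \geq 0$ satisfying the three pseudo-split inequalities $\sum_{j=1}^3 \psi_{S_i}(r^j)\bar s_j \geq 1$, $i = 1, 2, 3$, automatically satisfies every split inequality, hence lies in $S_f^k$ and is feasible for (\ref{Split}). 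It therefore suffices to produce such a $\bar s$ with $\bar s_1 + \bar s_2 + \bar s_3$ equal to the asserted bound; no optimization of the LP is required, only feasibility.

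First I would compute the relevant pseudo-split coefficients. With $y^1 = (0,1)$ and $y^2 = (0,0)$, the pseudo-split $S_i$ is bounded by the two lines through $y^1$ and $y^2$ whose common slope equals that of $r^i$, namely $-t_1$, $t_2$, $-t_3$; intersecting the half-line $f + \lambda r^j$ with the appropriate boundary line of $S_i$ and using $\psi_{S_i}(r^j) = 1/\lambda$ gives each coefficient. A direct calculation yields $\psi_{S_1}(r^1) = \psi_{S_2}(r^2) = \psi_{S_3}(r^3) = 0$ (each ray is an unbounded direction of its own pseudo-split), together with
\[
\psi_{S_1}(r^3) = \frac{\mu_3(t_1 - t_3)}{f_2}, \quad
\psi_{S_3}(r^1) = \frac{\mu_1(t_1 - t_3)}{1 - f_2}, \quad
\psi_{S_2}(r^1) = \frac{\mu_1(t_1 + t_2)}{1 - f_2}, \quad
\psi_{S_2}(r^3) = \frac{\mu_3(-t_2 - t_3)}{f_2},
\]
where the hypotheses $t_1 > t_3$ and $-t_1 < t_2 < -t_3$ both guarantee positivity of these quantities and determine on which of the two boundary lines of $S_i$ each ray lands.

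I would then propose the candidate solution
\[
\bar s_1 = \frac{1 - f_2}{\mu_1(t_1 - t_3)}, \qquad \bar s_2 = 0, \qquad \bar s_3 = \frac{f_2}{\mu_3(t_1 - t_3)},
\]
which uses only the two rays $r^1$ and $r^3$. Substituting into the constraints for $S_1$ and $S_3$ shows both hold with equality (each collapses to $1 = 1$), while the constraint for $S_2$ reduces to
\[
\frac{t_1 + t_2}{t_1 - t_3} + \frac{-t_2 - t_3}{t_1 - t_3} = \frac{t_1 - t_3}{t_1 - t_3} = 1,
\]
so it too holds with equality. Hence $\bar s$ is feasible, and $\bar s_1 + \bar s_2 + \bar s_3 = \frac{1}{t_1 - t_3}\left(\frac{1 - f_2}{\mu_1} + \frac{f_2}{\mu_3}\right)$, exactly the claimed bound. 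The only genuinely delicate point, and the step I would verify most carefully, is the coefficient bookkeeping: getting the signs right when deciding which boundary line of each $S_i$ a given ray meets (this is precisely where the ordering hypotheses on the $t_i$ are used), and the telescoping cancellation $t_1 + t_2 - t_2 - t_3 = t_1 - t_3$ that makes the $S_2$ constraint tight. Everything else is routine.
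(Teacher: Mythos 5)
Your proposal is correct and follows essentially the same route as the paper: reduce to the three pseudo-split inequalities via Lemma~\ref{lem:conv_comb}, compute the coefficients $\psi_{S_i}(r^j)$, and exhibit the feasible point $\bar s_1 = \frac{1-f_2}{\mu_1(t_1-t_3)}$, $\bar s_2 = 0$, $\bar s_3 = \frac{f_2}{\mu_3(t_1-t_3)}$, which is exactly the solution used in the paper. The coefficient values and the telescoping check of the $S_2$ constraint all agree with the paper's LP (\ref{eq:split_closure2}).
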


\begin{proof}
Let $y^1 = (0, 1)$ and $y^2 = (0, 0)$, these two points being used to
construct pseudo-splits.
By Lemma~\ref{lem:conv_comb}, we know that the three pseudo-splits
$S_1, S_2, S_3$ corresponding to the directions of $r^1, r^2, r^3$
dominate the entire split closure. More formally, the following LP
is a strengthening of (\ref{Split}) in this example of three rays.

\begin{equation}\label{eq:split_closure}
\begin{array}{rlcl}
   \min & s_1 + s_2 + s_3 & &\\
&\psi_{S_1}(r^1)s_1 + \psi_{S_1}(r^2)s_2 + \psi_{S_1}(r^3)s_3 & \geq 1 & \\
&\psi_{S_2}(r^1)s_1 + \psi_{S_2}(r^2)s_2 + \psi_{S_2}(r^3)s_3 & \geq 1 & \\
&\psi_{S_3}(r^1)s_1 + \psi_{S_3}(r^2)s_2 + \psi_{S_3}(r^3)s_3 & \geq 1 & \\
&  s  \in  \mathbb{R}_+^3 .
\end{array}
\end{equation}

It is fairly straightforward to compute the coefficients in the above
inequalities. We give the calculations for $S_1$; the coefficients for
the other two follow along similar lines.

$\psi_{S_1}(r^1)$ is $0$, since $r^1$ is parallel to the direction of $S_1$.

Consider $r^2$ and let its boundary point $p$ for $S_1$ be
$(0,f_2) + \gamma \mu_2(1,t_2)$, for some $\gamma \geq 0$. Then
$\psi_{S_1}(r^2)$ is $\frac{1}{\gamma}$. To compute $\gamma$, we
observe that $p$ is on boundary $1$ of $S_1$, by assumption of
$t_2 > -t_1$. Hence, the slope of the line connecting $p$ and $(0,1)$ is
$-{t_1}$. Therefore,

\[
\frac{f_2 + \gamma\mu_2 t_2 - 1}{0 + \gamma\mu_2} = -{t_1}
\]

which yields $\gamma = \frac{1-f_2}{\mu_2(t_1 + t_2)}$. Hence
$\psi_{S_1}(r^2) = \frac{\mu_2(t_1 + t_2)}{1-f_2}$.

Now consider $r^3$. As before, let its boundary point $p'$ for $S_1$
be $(0,f_2) + \gamma' \mu_3(-1,t_3)$, for some $\gamma' \geq 0$.
This time note that the ray intersects boundary $2$ (by the assumption
$t_3 < t_1$). Equating slopes, we get
\[
\frac{f_2 + \gamma' \mu_3t_3}{0 - \gamma'\mu_3} = -{t_1}
\]

which yields $\gamma' = \frac{f_2}{\mu_3(t_1 - t_3)}$. Hence
$\psi_{S_1}(r^3) = \frac{\mu_3(t_1 - t_3)}{f_2}$.

So we have that the inequality corresponding to $S_1$ is
\[
0\cdot s_1 + \frac{\mu_2(t_1 + t_2)}{1-f_2}s_2 + \frac{\mu_3(t_1 -
t_3)}{f_2}s_3 \geq 1 \ .
\]

By very similar calculations, we can get the inequalities corresponding to
$\psi_{S_2}$ and
$\psi_{S_3}$. LP (\ref{eq:split_closure}) becomes

\begin{equation}\label{eq:split_closure2}
\begin{array}{rlcl}
   \min & s_1 + s_2 + s_3 & &\\
&    0\cdot s_1 + \displaystyle
\frac{\mu_2(t_1 + t_2)}{1-f_2}s_2 + \frac{\mu_3(t_1 - t_3)}{f_2}s_3 &
\geq 1 & \\[0.1in]
&   \displaystyle \frac{\mu_1(t_1 + t_2)}{1-f_2}s_1 + 0\cdot s_2 +
\frac{\mu_3(-t_3 - t_2)}{f_2}s_3     & \geq 1 & \\[0.1in]
& \displaystyle \frac{\mu_1(t_1 -t_3)}{1-f_2}s_1 +
\frac{\mu_2(-t_3 - t_2)}{f_2}s_2 + 0 \cdot s_3  & \geq 1 & \\[0.1in]
&  s  \in  \mathbb{R}_+^3 .
\end{array}
\end{equation}

As a sanity check, note that the assumption $-t_1 < t_2 < -t_3$
implies that all the coefficients are nonnegative.

The following solution is feasible for LP (\ref{eq:split_closure2}):
$$
s_1 = \frac{1-f_2}{\mu_1(t_1 - t_3)}, \hspace{0.1in}
s_2 = 0, \hspace{0.1in}
s_3 = \frac{f_2}{\mu_3(t_1 -t_3)} \hspace{0.1in}
\hspace{0.1in}  \mbox{and} \hspace{0.1in}
s_1 + s_2 + s_3  =
\frac{1}{t_1 - t_3}\left(\frac{1-f_2}{\mu_1}+\frac{f_2}{\mu_3}\right) \ .
$$
Since the above LP was a strengthening of (\ref{Split}),
we obtain
$$
z_{SPLIT} \leq s_1 + s_2 + s_3 =
\frac{1}{t_1 - t_3}\left(\frac{1-f_2}{\mu_1}+\frac{f_2}{\mu_3}\right) \ .
$$
\end{proof}

If the rays are such that $\mu_1 = \mu_3 = 1$, then the above
expression is $\frac{1}{t_1 - t_3}$. This implies that in this case
if we have rays such that $(t_1 - t_3)$ tends to infinity, then
$z_{SPLIT}$ tends to $0$.

\subsection{Type $2$ triangles that do much better than the split closure}
\label{sec:type2}

In Section \ref{Subsec:on_segment}, we showed that we can bound the
value of the split closure under mild conditions on $f$ and the
rays. In particular, we showed that as $t_1 - t_3$ increases in
value, the split closure does arbitrarily bad. In this section, we
consider an infinite family of Type $2$ triangles with rays pointing
to its corners which satisfy these conditions.

Consider the same situation as in Section \ref{Subsec:on_segment}
and consider the Type $2$ triangle $T$ with the following three
edges. The line parallel to the $x_2$-axis and passing through
$(-1,0)$ supports one of the edges, and the other two edges are
supported by lines passing through $(0,1)$ and $(0,0)$ respectively.
See left part of Figure~\ref{fig:type2_triangle}. Note that in this
example, the rays are of the form $r^1 = (-1,t_1), r^2 = \mu(1,t_2),
r^3 = (-1,t_3)$. In the notation of Section~\ref{Subsec:on_segment},
$\mu_1 = \mu_3 = 1$.

\begin{figure}[htbp]
\begin{center}
\scalebox{.5}{\epsfig{file=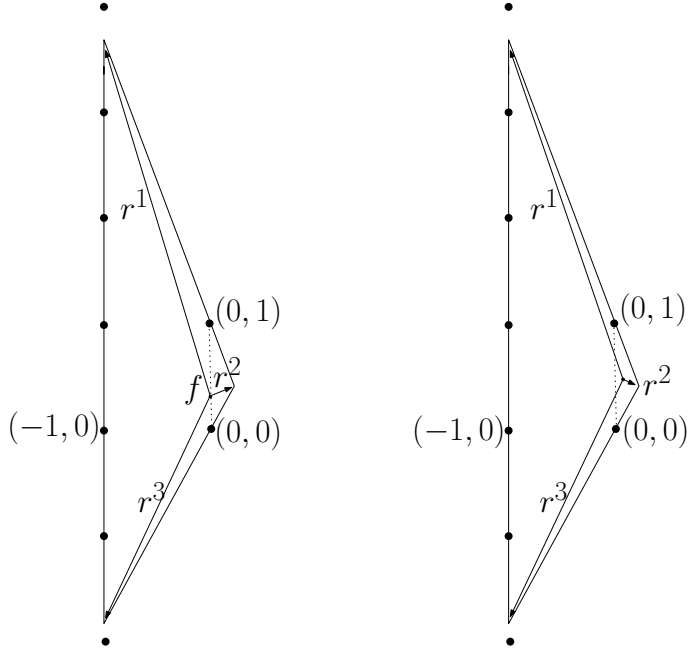}}
\caption{Facets from Type 2 triangles with large gap versus the
split closure}
\label{fig:type2_triangle}
\end{center}
\end{figure}

\begin{theorem}
Given any $\alpha > 1$, there exists a Type 2
triangle $T$ as shown in  Figure~\ref{fig:type2_triangle}
such that for any point $f$ in the relative interior of the
segment joining $(0,0)$ to $(0,1)$, LP
(\ref{Split}) has value $z_{SPLIT} \leq \frac{1}{\alpha}$.
\end{theorem}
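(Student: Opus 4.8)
The heavy lifting is already done by the bound of Section~\ref{Subsec:on_segment}; my plan is to exhibit a single triangle from the family of Figure~\ref{fig:type2_triangle} that is ``tall enough'' and let that theorem finish the job. Since the rays end on the boundary of $T$, Assumption~\ref{ass:scaling} gives $\psi(r^j)=1$ for every corner ray, so the facet reads $\sum_j s_j\ge 1$ and LP~(\ref{Split}) is precisely $z_{SPLIT}=\min\{\sum_j s_j:s\in S_f^k\}$. By Theorem~\ref{THM:mainRfk} all three corner rays of $T$ are present, and Remark~\ref{rem:simplify} then allows me to assume that $\{r^1,r^2,r^3\}$ is exactly the set of corner rays, reducing everything to the three-ray situation analysed in Section~\ref{Subsec:on_segment}.

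I would parametrize $T$ by the two points $(-1,A)$ and $(-1,B)$, $A>B$, at which the edges through $(0,1)$ and $(0,0)$ meet the line $x_1=-1$ (the third, vertical edge of $T$ being the segment between them). For $f=(0,f_2)$ with $0<f_2<1$ the top and bottom corner rays are $r^1=(-1,A-f_2)$ and $r^3=(-1,B-f_2)$, so in the notation of Section~\ref{Subsec:on_segment} we have $\mu_1=\mu_3=1$, $t_1=A-f_2$, $t_3=B-f_2$, and therefore
\[
t_1-t_3=A-B,
\]
which does not depend on $f_2$. The remaining corner ray $r^2=\mu_2(1,t_2)$ points to the fractional apex $v$, the meeting point of the two slanted edges, whose $x_1$-coordinate a direct computation places at $1/(A-B-1)$. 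Because $\mu_1=\mu_3=1$, the bracket in the Section~\ref{Subsec:on_segment} bound collapses, $\frac{1-f_2}{\mu_1}+\frac{f_2}{\mu_3}=1$, and I obtain
\[
z_{SPLIT}\le \frac{1}{A-B}\qquad\text{for every }f_2\in(0,1).
\]

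What remains is to check, uniformly in $f$, that $T$ is admissible and that the hypotheses of the Section~\ref{Subsec:on_segment} theorem hold. The three rays have nonzero first components and span $\mathbb{R}^2$ (as $r^1,r^3$ are independent), and writing $t_2=(v_2-f_2)/v_1$ the two strict inequalities $-t_1<t_2<-t_3$ reduce to $(A-B)(f_2-1)<0$ and $-f_2(A-B)<0$, both valid for $0<f_2<1$. For lattice-freeness I would observe that the apex sits at $x_1=1/(A-B-1)<1$ once $A-B>2$, so the only integer columns meeting $T$ are $x_1=-1$ and $x_1=0$; the cross-section of $T$ at $x_1=0$ is exactly $\{0\}\times[0,1]$, whose only lattice points $(0,0)$ and $(0,1)$ lie one in the relative interior of each slanted edge, while every lattice point on $x_1=-1$ lies on the vertical edge. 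Hence $T$ has no interior lattice point, carries at least two on its vertical edge, and has the fractional apex $v$: it is a lattice-free triangle of Type~2, with $f=(0,f_2)$ in its interior.

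The single free parameter is the height $A-B$, and there is no conflict between the two demands on it: I would take $A-B$ to be any rational number with $A-B\ge\max(\alpha,3)$, which at once forces $A-B>2$ (so $T$ is a lattice-free Type~2 triangle with rational corner rays) and $A-B\ge\alpha$ (so $z_{SPLIT}\le 1/(A-B)\le 1/\alpha$ for every $f$ on the open segment). The step needing the most care is precisely this uniformity over all $f$ simultaneously: it succeeds only because stretching the triangle leaves $t_1-t_3=A-B$ independent of $f_2$ while $\mu_1=\mu_3=1$ makes the $f_2$-weighted bracket equal to $1$, so one fixed triangle beats the split closure along the whole segment. Combined with Theorem~\ref{THM:goemans}, this establishes Theorem~\ref{THM:T&Q_2}.
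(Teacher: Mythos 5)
Your proof is correct and takes essentially the same route as the paper's: both invoke the pseudo-split bound $z_{SPLIT}\le\frac{1}{t_1-t_3}\left(\frac{1-f_2}{\mu_1}+\frac{f_2}{\mu_3}\right)$ of Section~\ref{Subsec:on_segment} with $\mu_1=\mu_3=1$ and choose the vertical edge of the Type~2 triangle long enough that $t_1-t_3\ge\alpha$. Your write-up simply makes explicit the checks (lattice-freeness, $-t_1<t_2<-t_3$, and the fact that $t_1-t_3=A-B$ is independent of $f_2$, giving uniformity over the segment) that the paper leaves implicit.
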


\begin{proof}
Let $M = \lceil\alpha\rceil$.
When the fractional point  $f$ is on the segment connecting
$(0,0)$ and $(0,1)$, consider the triangle $T$ with $M$ integral points in the
interior of the vertical edge (the triangle on the left in
Figure~\ref{fig:type2_triangle}). This implies $t_1 - t_3 \geq M$.
Therefore, from the result of Section~\ref{Subsec:on_segment},
$\mu_1 = \mu_3 = 1$ implies that $z_{SPLIT}\leq \frac{1}{t_1 - t_3}
\leq \frac{1}{\alpha}$.

\old{
When the fractional point is not on the
segment connecting $(0,0),(0,1)$ but is in the interior of triangle $\Delta$
with vertices $(0,0)$, $(0,1)$ and the vertex $x^2$ of $T$ with
positive first coordinate (on the right in
Figure~\ref{fig:type2_triangle}), consider a triangle $T$ with $2M$
integral points on the vertical edge. Thus, $t_1 - t_3 \geq 2M$.
From Section~\ref{Subsec:off_segment}, $\mu_1 = \mu_3 = 1$ implies
that $z_{SPLIT}\leq \frac{2}{t_1 + t_3} \leq \frac{1}{M} \leq
\frac{1}{\alpha}$.
}
\end{proof}

In this example, for any large constant $\alpha$, optimizing
over the split closure in the direction of the facet defined by
these Type $2$ triangles yields at most $\frac{1}{\alpha}$. This
implies Theorem~\ref{THM:T&Q_2}.

\subsection{More bad examples}

The examples of Section \ref{sec:type2} can be modified in various ways while keeping the property that the split closure is arbitrarily bad. The proofs are similar to that of Theorem~\ref{THM:T&Q_2}.

\subsubsection{Type 2 triangles when $f$ is not on the segment
joining $(0,0)$ to $(0,1)$}

The example of Section \ref{sec:type2} can be generalized to the case
where $f$ is not on the segment connecting the points $(0,0)$ and $(0,1)$
as follows. Let $T$ be a Type 2 triangle as shown on the right part of
Figure~\ref{fig:type2_triangle}. Let $\Delta$  be the triangle
with vertices $(0,0)$, $(0,1)$ and the vertex $x^2$ of $T$ with
positive first coordinate.
When the fractional point $f$ is in the interior of triangle $\Delta$,
and triangle $T$ has $2M$ integral points on its vertical edge, one can show that $z_{SPLIT}\leq \frac{1}{M}$.

However, such bad examples cannot be constructed for any position of
point $f$ in the triangle $T$. In particular, define the triangle
$\Delta '$ obtained from $\Delta$ by a homothetic transformation
with center $x^2$ and factor 2 (so one vertex of $\Delta '$
is $x^2$ and points $(0,0)$ and $(0,1)$ become the middle points of the
two edges of $\Delta '$ with endpoint $x^2$).
When $f$ is an interior point of $T$ outside $\Delta '$, it is easy
to see that the split inequality obtained from the split parallel
to the $x_2$-axis $-1 \leq x_1 \leq 0$ approximates the triangle
inequality defined by $T$ to within a factor at most 2. Indeed the
linear program is

\begin{equation}
\begin{array}{rlcl}
\min & s_1 + s_2 + s_3 & &\\[0.1in]
 & \displaystyle
 s_1 + \frac{f_1 - u}{f_1}s_2 + s_3 & \geq 1 & \\[0.1in]
&  s  \in  \mathbb{R}_+^3 ,
\end{array}
\end{equation}

\noindent where $u$ is the first coordinate of $x^2$. The optimal
solution is $s_1 = 0$, $s_2=\frac{f_1}{f_1-u}$, $s_3=0$. Thus
$s_1+s_2+s_3= \frac{f_1}{f_1-u} \geq \frac{1}{2}$ since $f_1 \leq
-u$ for any $f \in T \setminus \Delta '$. This implies that the
split inequality approximates the triangle inequality by a factor at
most 2 when $f$ is outside $\Delta '$.

\begin{figure}[htbp]
\begin{center}
\scalebox{.5}{\epsfig{file=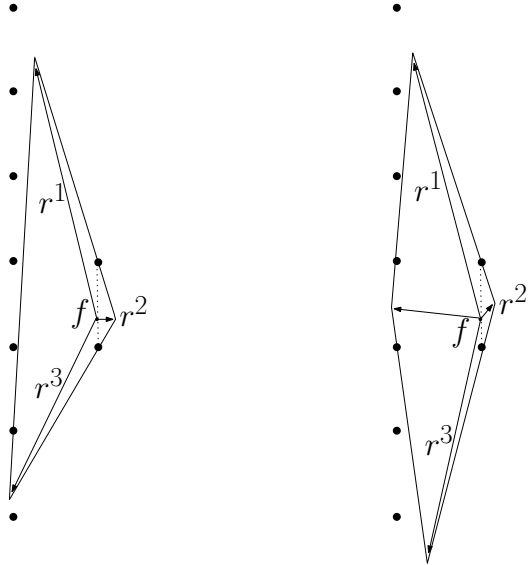}} \caption{Facets from
Type 3 triangles and quadrilaterals on which the split closure does
poorly} \label{fig:type2_and_quad}
\end{center}
\end{figure}
\subsubsection{Triangles of Type 3 and quadrilaterals}

We now show how to modify the construction of Section
\ref{sec:type2} to get examples of Type $3$ triangles and
quadrilaterals that do arbitrarily better than the split closure.

To get a Type 3 triangle, we tilt the vertical edge of the triangle in
Figure~\ref{fig:type2_triangle} around its integral point with minimum
$x_2$-value. See Figure \ref{fig:type2_and_quad}. The same bound on
$z_{SPLIT}$ is then achieved.

Similarly, quadrilaterals can be constructed by breaking the
vertical edge in Figure~\ref{fig:type2_triangle} into two edges of
the quadrilateral. See Figure \ref{fig:type2_and_quad}. By very
similar arguments as in the previous section, we can show that
$z_{SPLIT}$ tends to $0$.

\bigskip
{\bf Acknowledgements:} We thank the referees for their very helpful comments.


\end{document}